\newtheorem{thm}{Theorem}[section]
\newtheorem*{thm*}{Theorem} 
\newtheorem{lem}[thm]{Lemma}
\newtheorem{Def}[thm]{Definition}
\newtheorem{prop}[thm]{Proposition}
\newtheorem{cor}[thm]{Corollary}
\newtheorem{rem}[thm]{Remark}
\theoremstyle{definition}{\newtheorem{ex}[thm]{Example}}
\newcommand{\Z}{\ensuremath{\mathbb{Z}}}
\newcommand{\s}{\ensuremath{\sigma}}
\newcommand{\te}{\ensuremath{\otimes_k}}
\newcommand{\G}{\ensuremath{\mathcal{G}}}
\newcommand{\Gd}{{\ensuremath{^{\sigma^d \! \!}\mathcal{G}}}}
\newcommand{\X}{\ensuremath{\mathcal{X}}}
\newcommand{\Y}{\ensuremath{\mathcal{Y}}}
\newcommand{\Xd}{{\ensuremath{^{\sigma^d \! \!}\mathcal{X}}}}
\newcommand{\GL}{\operatorname{GL}}
\newcommand{\SL}{\operatorname{SL}}
\newcommand{\Gm}{\mathbb{G}_m}
\newcommand{\Ga}{\mathbb{G}_a}
\newcommand{\Rd}{{\ensuremath{_{\sigma^d \! }R}}}
\newcommand{\Sd}{{\ensuremath{_{\s^d \!}S}}}
\newcommand{\Hom}{\operatorname{Hom}}
\newcommand{\Aut}{\operatorname{Aut}}
\newcommand{\Spec}{\operatorname{Spec}}
\newcommand{\Id}{\operatorname{id}}
\newcommand{\Alg}{\mathbf{Alg}}
\newcommand{\Sets}{\mathbf{Sets}}
\newcommand{\Groups}{\mathbf{Groups}}
\newcommand{\h}{\operatorname{H}_\s}
\newcommand{\z}{\operatorname{Z}_\s}
\newcommand{\del}{\partial}
\newcommand{\LL}{\mathcal{L}}
\newcommand{\ks}{$k$-$\s$}
\newcommand{\ds}{\delta\sigma}
\newcommand{\de}{\delta}
\newcommand{\ida}{\mathfrak{a}}
\newcommand{\I}{\mathbb{I}}
\newcommand{\id}{\operatorname{id}}
\title{Torsors for Difference Algebraic Groups}
\author{Annette Bachmayr}
\email{annette.bachmayr@tu-dortmund.de}
\address{(ne\'{e} Maier), Fakult\"{a}t f\"{u}r Mathematik, Technische Universit\"{a}t Dortmund, D-44221 Dortmund,	Germany}
\author{Michael Wibmer}
\email{wibmer@math.upenn.edu}
\address{Department of Marthematics, University of Pennsylvania, David Rittenhouse Laboratory, 209 South 33rd Street, Philadelphia, PA 19104, USA}
\keywords{difference algebraic groups, cohomology, torsors, descent for difference varieties}
\thanks{The first author was funded by the Deutsche Forschungsgemeinschaft (DFG) - grant MA6868/1-1.}
\date{\today}
\begin{document}

	\begin{abstract}
		We introduce a cohomology set for groups defined by algebraic difference equations and show that it classifies torsors under the group action. This allows us to compute all torsors for large classes of groups. We also develop some tools for difference algebraic geometry and present an application to the Galois theory of differential equations depending on a discrete parameter.
	\end{abstract}
	
\maketitle

\section*{Introduction}
Groups defined by algebraic difference equations have been studied in connection with applications such as the Manin-Mumford conjecture (see e.g. \cite{Hrushovski:Manin-Mumford}, \cite{KowalskiPillay:algebraicsgroups}). More recently, Galois theories where the Galois groups are defined by algebraic difference equations have been developed (\cite{DHW}, \cite{OchinnikovWibmer:sGaloistheory}). In these Galois theories, the solution rings, when interpreted geometrically, are torsors under the action of the Galois group. It is therefore highly desirable to obtain a thorough understanding of the structure of torsors for groups defined by difference equations.

For algebraic groups, a starting point in the quest to understand the torsors under the action of a given algebraic group $G$ over a field $k$ is the canonical bijection between the (non-abelian) cohomology set
$\operatorname{H}^1(k,G)$ and the 
isomorphism classes of $G$-torsors. In this article we introduce a cohomology set
$\h^1(k,G)$ for a group $G$ defined by algebraic difference equations with coefficients in a difference field $k$, i.e., $k$ is a field equipped with an endomorphism $\s\colon k\to k$. 
We show that indeed $\h^1(k,G)$ classifies $G$-torsors. This allows us to explicitly describe all $G$-torsors for large classes of examples. 

As an application to the Galois theory of linear differential equations depending on a discrete parameter, we show that our cohomology set can also be used to classify the solutions rings (i.e., the $\s$-Picard-Vessiot rings) for a linear differential equation depending on a discrete parameter. Further applications to this Galois theory relating to the structure of the solution rings and the inverse problem will be presented in a future paper.

\medskip

Under the heading ``constrained cohomology''
E. Kolchin introduced in \cite{Kolchin} (see also \cite{Kovacic:constrainedcohomolgy}) a cohomology set for groups defined by algebraic differential equations that serves a similar purpose as our cohomology set for groups defined by algebraic difference equations. The approach taken by Kolchin is in spirit close to the Galois cohomology for algebraic groups and the term ``constrained'' refers to the fact that cocycles are certain maps from the automorphism group of the constrained closure (nowadays more commonly referred to as a ``differential closure'') of the base differential field into the group. 

For a given difference field $k$, it seems that so far no object that could play the role of a ``difference closure of $k$'' or of its automorphism group has been found. We completely avoid this issue by employing a functorial approach, somewhat in the spirit of how faithfully flat descent generalizes and replaces Galois descent. In particular, for us, cocylces for a group $G$ defined by algebraic difference equations with coefficients in a difference field $k$ are certain elements in $G(A\otimes_k A)$ where $A$ is a difference algebra over $k$.  

Among other things, this has the advantage that we are not restricted to certain nice extensions of difference fields (playing the role of Galois extensions in the classical theory) but we can consider arbitrary difference algebras $A$ instead.
In particular, the cohomology set $\h^1(A/k,G)$ classifies $G$-torsors that have a point in $A$.
Indeed, this works for arbitrary difference rings $k$, as long as $A$ is faithfully flat over $k$.

It seems clear that our approach could be adopted to groups defined by algebraic differential equations as well. This would eliminate the need to consider automorphism of the differential closure and would expedite the calculation of examples.

\medskip

The outline of the article is as follows: After explaining the basic definitions (such as difference algebraic groups and torsors), we introduce the method of faithfully flat descent in difference algebraic geometry. This is a crucial tool for the proof of our main result (Theorem \ref{Klassifikation}; the bijection between the cohomology set and the isomorphism classes of torsors). While for the purpose of this paper faithfully flat descent for difference varieties would be sufficient, we develop faithfully flat descent in the more general context of sheaves since we believe this to be important for the applicability of the method to other problems in difference algebraic geometry. 

In Section \ref{sec 3} we introduce the cohomology set $\h^1(A/k,G)$ and we show that it coincides with the usual cohomology of an algebraic group in the case when $G$ is defined by algebraic equations (rather than actual difference equations).

In Section \ref{sec: torsors and cocycles} we then prove the main result: The bijection between $\h^1(A/k,G)$ and the isomorphism classes of $G$-torsors that are trivial over $A$. In Section \ref{sec 5} we introduce the cohomology set $\h^1(k,G)$ and we show that a short exact sequence of groups defined by difference equations gives rise to a long exact sequence of cohomology sets. We also explicitly compute $\h^1(k,G)$ for some examples.

In Section \ref{C} we use the results of the previous sections to write down explicitly all torsors for a large class of groups.

Finally, in the last section we present an application to the Galois theory of linear differential equations depending on a discrete parameter. We show that if $G$ is the $\s$-Galois group of a linear differential equation $y'=Ay$ (with respect to some $\s$-Picard-Vessiot ring), then the isomorphism classes of all $\s$-Picard-Vessiot rings for $y'=Ay$ are classified by $\h^1(k,G)$. 
\begin{acknowledgements}
	We are thankful to Johannes Schmidt for helpful comments.
\end{acknowledgements}

\section{Notation and basic definitions} 
We first recall some basic notions from difference algebra. Standard references are \cite{Levin} and \cite{Cohn}. A look at the first chapters of \cite{Wibmer:Habil} might also be helpful.

All rings are assumed to be commutative and with unity. A difference ring, or \textit{$\s$-ring} for short, is a ring $k$ equipped with a ring homomorphism $\s\colon k \to k$. If the underlying ring of a $\s$-ring is a field, it is called a \emph{$\s$-field}. A morphism of $\s$-rings is a morphism of rings that commutes with the action of $\s$. If $k\to R$ is a morphism of $\s$-rings, we also say that $R$ is a \emph{\ks-algebra}. A morphism of \ks-algebras is a morphism of $k$-algebras that is also a morphism of difference rings. The category of \ks-algebras is denoted by \ks-$\Alg$. A \emph{\ks-subalgebra} of a \ks-algebra is a $k$-subalgebra that is stable under $\s$. If $R$ is a \ks-algebra, and $B\subseteq R$ a subset, the smallest \ks-subalgebra of $R$ containing $B$ is denoted by $k\{B\}$. A \ks-algebra $R$ is called \emph{finitely $\s$-generated} if $R=k\{B\}$ for a finite subset $B$ of $R$. 
If $R$ and $S$ are \ks-algebras, then $R\otimes_k S$ is naturally a \ks-algebra via $\s(r\otimes s)=\s(r)\otimes\s(s)$.
A \emph{$\s$-field extension} $L/K$ is an extension of fields such that $L$ is a $K$-$\s$-algebra.

An ideal $\ida$ in a $\s$-ring $R$ is called a \emph{$\s$-ideal} if $\s(\ida)\subseteq\ida$. In this case $R/\ida$ naturally carries the structure of a $\s$-ring.
The \textit{$\s$-polynomial ring} $k\{y_1,\dots,y_n\}$ over a $\s$-ring $k$ in the $\s$-variables $y_1,\dots,y_n$ is the polynomial ring over $k$ in the variables $y_1,\dots,y_n, \s(y_1),\dots,\s(y_n),\s^2(y_1),\dots$. We consider $k\{y_1,\dots,y_n\}$ as a \ks-algebra by extending $\s$ from $k$ to $k\{y_1,\dots,y_n\}$ as suggested by the names of the variables.
 
A set of $\s$-polynomials $F \subseteq k\{y_1,\dots,y_n\}$ defines a functor $X$ from $k$-$\s$-$\Alg$ to $\Sets$ by $X(R)=\{ x \in R^n \mid f(x)=0 \text{ for all } f \in F \}$ for all $k$-$\s$-algebras $R$. 
 Any such functor will be called a \emph{\ks-variety} (or a $\s$-variety over $k$). A morphism of \ks-varieties is a morphism of functors.
 
For a $\s$-variety $X$ we define 
$$\I(X)=\{f\in k\{y_1,\dots,y_n\}|\ f(x)=0 \text{ for all } x\in X(R) \text{ and \ks-algebras } R \}.$$
Then $\I(X)$ is a $\s$-ideal in $k\{y_1,\dots,y_n\}$ and 
 $$k\{X\}=k\{y_1,\dots,y_n\}/\I(X)$$
 is a finitely $\s$-generated \ks-algebra that we call the \emph{coordinate ring} of $X$. 
 For any \ks-algebra $R$ we have a map
 $\Hom(k\{X\},R)\to X(R)$
 given by sending a morphism of \ks-algebras $\psi\colon k\{X\}\to R$ to $(\psi(\overline{y_1}),\ldots,\psi(\overline{y_n}))$. It is easy to see (Cf. \cite[Section 1.2.1]{Wibmer:Habil}.) that this defines an isomorphism of functors $\Hom(k\{X\},-)\cong X$ and that the category of \ks-varieties is anti-equivalent to the category of finitely generated \ks-algebras. In particular, a morphism $\phi\colon X\to Y$ of difference varieties corresponds to a morphism $\phi^*\colon k\{Y\}\to k\{X\}$ of \ks-algebras. In the sequel we will sometimes identify $X(R)$ with $\Hom(k\{X\},R)$.
 
 We allow ourselves the freedom of calling any functor from \ks-$\Alg$ to $\Sets$ that is representable by a finitely $\s$-generated \ks-algebra a \ks-variety.

A \emph{$\s$-closed $\s$-subvariety} $Y$ of a $\s$-variety $X$, is a subfunctor $Y$ of $X$ defined by a $\s$-ideal. More precisely, we require that there exists a $\s$-ideal $\ida\subseteq k\{X\}$ such that
$$ 
\xymatrix{
Y(R) \ar@{^{(}->}[r] \ar_\cong[d] & X(R) \ar^\cong[d] \\
\Hom(k\{X\}/\ida, R) \ar@{^{(}->}[r] & \Hom(k\{X\},R)
}
$$
commutes for every \ks-algebra $R$. For example, if $F\subseteq F'\subseteq k\{y_1,\ldots,y_n\}$, then the $\s$-variety defined by $F'$ is a $\s$-closed $\s$-subvariety of the $\s$-variety defined by $F$.

A morphism $\phi\colon X\to Y$ of $\s$-varieties is called a \emph{$\s$-closed embedding} if it induces an isomorphism between $X$ and a $\s$-closed $\s$-subvariety of $Y$. One can show (\cite[Lemma 1.2.7]{Wibmer:Habil}) that $\phi\colon X\to Y$ is a $\s$-closed embedding if and only if $\phi^*\colon k\{Y\}\to k\{X\}$ is surjective.

If $R$ is a $\s$-ring or a $k$-$\s$-algebra, we write $R^\sharp$ for the ring or $k$-algebra obtained from $R$ by forgetting $\s$. If $\X$ is an affine scheme of finite type over $k$, then we can define a functor
$X$ from \ks-$\Alg$ to $\Sets$ by $X(R)=\X(R^\sharp)$ for any \ks-algebra $R$. It is easy to see that $X$ is a \ks-variety: If $\X=\Spec(k[y_1,\ldots,y_n]/\ida)$ for some ideal $\ida\subseteq k[y_1,\ldots,y_n]$, then $X$ is the $\s$-variety defined by $\ida\subseteq k\{y_1,\ldots,y_n\}$.

If $X$ is any functor from \ks-$\Alg$ to $\Sets$ and $A$ a \ks-algebra, we denote by $X_A$ the base change functor from $A$-$\s$-$\Alg$ to $\Sets$ given by $X_A(R)=X(R)$ where the $A$-$\s$-algebra $R$ is interpreted as a \ks-algebra via $k\to A\to R$. If $X$ is a \ks-variety $X_A$ is an $A$-$\s$-variety (represented by $k\{X\}\otimes_k A$).

 The category of \ks-varieties has products, indeed $k\{X\times Y\}=k\{X\}\otimes_k k\{Y\}$. A \textit{$\s$-algebraic group} over $k$ is a group object in the category of \ks-varieties.
 A morphism $\phi\colon G\to H$ of $\s$-algebraic groups is a morphism of $\s$-varieties that is compatible with the group structure, i.e., $\phi_R\colon G(R)\to H(R)$ is a morphism of groups for all \ks-algebras $R$.
A \emph{$\s$-closed subgroup} $H$ of a $\s$-algebraic group $G$, is a $\s$-closed $\s$-subvariety $H$ of $G$ such that $H(R)$ is a subgroup of $G(R)$ for any \ks-algebra $R$. If $H(R)$ is a normal subgroup of $G(R)$ for all \ks-algebras $R$, then $H$ is called a \emph{normal} $\s$-closed subgroup.

\begin{ex} \label{ex algebr Gruppe}
Let $k$ be a $\s$-ring and let $\G$ be an algebraic group over $k$, by which we mean an affine group scheme of finite type over $k^\sharp$. Then $\G$ can be interpreted as a $\s$-algebraic group over $k$ by considering the functor $G$ from $k$-$\s$-$\Alg$ to $\Groups$ with $G(R)=\G(R^\sharp)$ for every $k$-$\s$-algebra $R$.
\end{ex}

\begin{Def} \label{def: torsor} Let $k$ be a $\s$-ring and $G$ a $\s$-algebraic group over $k$.
A \emph{$G$-torsor} is a non-empty $\s$-variety $X$ over $k$ together with a morphism $\mu \colon G \times X \to X$ of $\s$-varieties such that $\mu_R\colon G(R)\times X(R) \to X(R)$ defines a group action of $G(R)$ on $X(R)$ and such that $ G(R)\times X(R) \to X(R) \times X(R), \ (g,x) \mapsto (\mu_R(g,x), x)$ is bijective for every $k$-$\s$-algebra $R$.
\end{Def} 

 A \emph{morphism of $G$-torsors} $\phi\colon X\to Y$ is a morphism of $\s$-varieties that is $G$-equivariant, i.e.,
 $$
 \xymatrix{
 	G\times X \ar[r] \ar_{\id\times \phi}[d]  & X \ar^\phi[d] \\
 	G\times Y \ar[r] & Y
 } 
 $$
 commutes.
 
If $X$ is a $G$-torsor and $R$ is a $k$-$\s$-algebra, we will abbreviate $g.x=\mu_R(g,x)$ for $g \in G(R)$, $x \in X(R)$. If $S$ is another $k$-$\s$-algebra and if there is an obvious morphism $R \to S$ (which will be clear from the context) we will use expressions such as $g.x$ for $g \in G(S), x \in X(R)$ to denote $g.x=g.\alpha(x)$ where $\alpha$ is the induced map $\alpha \colon X(R) \to X(S)$. 

The bijectivity of $G(R)\times X(R) \to X(R) \times X(R)$ in Definition \ref{def: torsor} is equivalent to saying that for $x_1,x_2\in X(R)$ there exists a unique $g\in G(R)$ with $x_2=g.x_1$.
The group itself $X=G$ is a $G$-torsor under the group multiplication (from the left).  

Let $A$ be a $k$-$\s$-algebra and let $X$ be a $G$-torsor over $k$. Then $X_A$ is a $G_A$-torsor and we say that \emph{$X$ is trivial over $A$} if $X_A$ is isomorphic to $G_A$ as a $G_A$-torsors.  

\begin{rem}\label{trivialer Torsor}
 A $G$-torsor $X$ is trivial over $A$ if and only if $X(A)\neq \emptyset$.
\end{rem}
\begin{proof}
 If $X$ is trivial over $A$, then $X(A)$ can be identified with $G(A)$ and is thus non-empty (since $1\in G(A)$). If $X(A)$ is non-empty, let $x \in X(A)$. For any $A$-$\s$-algebra $R$, set $\gamma_R \colon G(R) \to X(R), \ g \mapsto g.x $. This defines an isomorphism of $G_A$-torsors $\gamma \colon G_A \to X_A$.
\end{proof} 

Clearly every $G$-torsor $X$ becomes trivial over a finitely $\s$-generated $k$-$\s$-algebra $A$, namely $A=k\{X\}$. However, as the following example illustrates, if $k$ is a $\s$-field and $X$ a $G$-torsor, it is in general not true that $X$ becomes trivial over a $\s$-field extension of $k$. Thus, even though we are mostly interested in the case where $k$ is a $\s$-field, it is important to work with \ks-algebras $A$ rather than $\s$-field extensions of $k$.

\begin{ex} \label{ex1} Let $k$ be a $\s$-field and let $G$ be the $\s$-closed subgroup of the multiplicative group $\Gm$ given by
	$$G(R)=\{g\in R^\times|\ g^2=1,\ \sigma(g)=g\}$$
	for any \ks-algebra $R$.
{\begin{enumerate} 
 \item   Let $X$ be the \ks-variety given by $X(R)=\{x \in R \mid x^2=1,  \ \s(x)=-x \}$ for every $k$-$\s$-algebra $R$. Then $X$ is a $G$-torsor ($G(R)$ acts on $X(R)$ via multiplication in $R$) and $X$ is non-trivial over every $\s$-field extension $K/k$, since $X(K)=\emptyset$ for all such $K$. 
\item More generally, if $a, b \in k^\times$ such that $\s(a)=ab^2$, then $X_{a,b}(R)=\{ x\in R \mid x^2=a, \ \sigma(x)=bx\}$ defines a $G$-torsor $X_{a,b}$. (Note that the condition $\s(a)=ab^2$ guarantees that $X$ is not empty.) Based on our study of $H^1$ we will see later that in fact every $G$-torsor is isomorphic to $X_{a,b}$ for a suitable choice of $a, b \in k^\times$ (Example \ref{ex3}).
\end{enumerate}}
\end{ex}

\begin{ex} \label{ex: Ga intro}
	Let $k$ be a $\s$-field and let $G$ be the $\s$-closed subgroup of the additive group $\Ga$ given by
	$$G(R)=\{g\in R|\ \s^n(g)+\lambda_{n-1}\s^{n-1}(g)+\ldots+\lambda_0g=0\}$$
	for any \ks-algebra $R$, where $\lambda_0,\ldots,\lambda_{n-1}\in k$. For $a\in k$ let $X_a$ be given by
	$$X_a(R)=\{x\in R|\ \s^n(x)+\lambda_{n-1}\s^{n-1}(x)+\ldots+\lambda_0x=a \}$$
	for any \ks-algebra $R$. Then $X_a$ is a $G$-torsor under the action $g.x=g+x$. Based on our study of $H^1$ we will see later that in fact every $G$-torsor is isomorphic to some $X_{a}$ (Example \ref{ex Ga}). 
\end{ex}

\section{Faithfully flat descent for difference varieties}
In this section, we introduce faithfully flat descent for difference varieties. This is a crucial tool for proving the main result (Theorem \ref{Klassifikation}) in Section 
\ref{sec: torsors and cocycles}, but we also expect this technique to be applicable to other problems in difference algebraic geometry. Of course, this is inspired by faithfully flat descent for schemes. (See e.g., \cite[Tag 0238]{stacks-project}).
We first treat the more general case of sheaves in the context of faithfully flat difference algebras.

 Throughout this section, $k$ denotes a $\s$-ring and $A$ denotes a faithfully flat $k$-$\s$-algebra, i.e., $A$ is a \ks-algebra and $A$ is faithfully flat as a $k$-algebra.

Sheaves are useful when working with quotients by group actions. (See e.g. \cite[Chapter III]{DG} or \cite[Section 5.1]{Wibmer:Habil}.)
A functor $F$ from $k$-$\s$-$\Alg$ to $\Sets$ is called a \emph{sheaf} over $k$ if it satisfies the following two conditions:
\begin{enumerate}
	\item For every morphism of \ks-algebras $R\to S$ such that $S$ is a faithfully flat $R$-algebra, the sequence $$F(R) \to F(S) \rightrightarrows F(S\otimes_R S)$$ is exact (i.e., an equalizer of sets). In particular, $F(R) \to F(S)$ is injective. 
	\item For every finite family $(R_i)$ of \ks-algebras, the map $F(\prod R_i)\to\prod F(R_i)$ induced from the projections $\prod R_i\to R_{i_0}$ is bijective.
\end{enumerate}

It is easy to see that a representable functor is a sheaf (cf. \cite[Cor. 5.1.4]{Wibmer:Habil}). In particular any \ks-variety is a sheaf.  A morphism of sheaves is a morphism of functors.


If $F$ is a functor from $A$-$\s$-$\Alg$ to $\Sets$ and $B$ is a $k$-$\s$-algebra, then we write $B\te F$ for the base change $B\te F\colon (B\otimes_k A)$-$\s$-$\Alg\to \Sets$ of $F$ from $A$ to $B\otimes_k A$. 
That is, for a $B\otimes_k A$-$\s$-algebra $R$ we have
$(B\otimes_k F)(R)=F(R)$, where $R$ is considered as an $A$-$\s$-algebra via $A\to A\otimes_k B\to R$. Similarly we define $F\otimes_k B\colon (A\otimes_k B)$-$\s$-$\Alg\to \Sets$. If $F$ is a sheaf over $A$, then $B\te F$ is a sheaf over $B \te A$.

Let $F \colon A$-$\s$-$\Alg \to \Sets$ be a sheaf over $A$. A \textit{descent datum on $F$} (or more precisely, an $A/k$-descent datum) is an isomorphism $\phi \colon A \te F \to F \te A$ such that $\phi_{13}=\phi_{12}\circ \phi_{23}$, where 
\begin{eqnarray*}
 \phi_{23} \colon A\te A \te F \to A \te F \te A \\
 \phi_{12} \colon A \te F \te A \to F \te A \te A \\
 \phi_{13} \colon A\te A \te F \to F \te A \te A
\end{eqnarray*} denote the canonical extensions of $\phi$.
The isomorphisms $\phi_{ij}$ of functors from $A\otimes_k A\otimes_k A$-$\s$-$\Alg$ to $\Sets$ correspond to the three canonical possibilities of considering $A\otimes_k A\otimes_k A$ as an $A\otimes_k A$-algebra as determined by the indices $ij$. For example, $\phi_{23}$ corresponds to $A\otimes_k A\to A\otimes_k A\otimes_k A,\ a\otimes b\mapsto 1\otimes a\otimes b$.  

 Let $\tilde F \colon A$-$\s$-$\Alg \to \Sets$ be another sheaf with descent datum $\tilde \phi \colon A \te \tilde F \to \tilde F \te A$. A \textit{morphism of sheaves with descent data} is a morphism of functors $\gamma \colon F \to \tilde F$ such that the following diagram commutes: 
\[\xymatrix{A\te F \ar@{->}[d]_{A\otimes\gamma} \ar@{->}[rr]^{\ \phi \ } &&F\te A \ar@{->}[d]^{\gamma \otimes A}\\
            A \te \tilde F \ar@{->}[rr]_{\tilde \phi} &&  \tilde F\te A} \] 
        In this situation we also say that $\gamma$ is compatible with the descent data.
If $F_0  \colon k$-$\s$-$\Alg \to \Sets$ is a sheaf over $k$, then there is a canonical $A/k$-descent datum on $(F_0)_A$ corresponding to the fact that the two maps $A\rightrightarrows A\otimes_k A$ agree on $k$.

\begin{thm}\label{abstieg}
 The base change from $k$ to $A$ defines an equivalence of categories from the category of sheaves over $k$ to the category of sheaves over $A$ with $A/k$-descent data.
\end{thm}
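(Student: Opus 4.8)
The plan is to construct an explicit quasi-inverse to the base change functor and check the two compatibilities, following the classical template for faithfully flat descent for schemes (or affine schemes), but verifying that everything stays in the world of $\s$-rings and sheaves of difference algebras. So let $(F,\phi)$ be a sheaf over $A$ equipped with an $A/k$-descent datum. I would define a functor $F_0$ on \ks-$\Alg$ by the equalizer
\[
F_0(R)=\operatorname{eq}\Bigl(F(R\otimes_k A)\rightrightarrows F(R\otimes_k A\otimes_k A)\Bigr),
\]
where the two maps are: (i) the one induced by $R\otimes_k A\to R\otimes_k A\otimes_k A$, $r\otimes a\mapsto r\otimes a\otimes 1$, composed with $\phi$ evaluated at $R\otimes_k A\otimes_k A$, and (ii) the one induced by $r\otimes a\mapsto r\otimes 1\otimes a$. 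Here one uses that $R\otimes_k A$ is a $\s$-ring (the tensor product of \ks-algebras is naturally a \ks-algebra, as recalled in Section 1) and faithfully flat over $R$, and that $\phi$, being a morphism of sheaves over $A$, produces well-defined elements of $F$ on these difference rings.

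Next I would verify that $F_0$ is a sheaf over $k$: conditions (1) and (2) for $F_0$ follow from the corresponding conditions for $F$ because forming $-\otimes_k A$ and $-\otimes_k A\otimes_k A$ preserves faithfully flat morphisms and finite products, and a filtered/finite limit of exact sequences is exact. Then there is a canonical morphism $(F_0)_A\to F$ over $A$: for a $A$-$\s$-algebra $R$, an element of $F_0(R)$ lives in $F(R\otimes_A(A\otimes_kA))$ (using $R\otimes_kA\cong R\otimes_A(A\otimes_kA)$ as $A$-$\s$-algebras via the first factor) and pushes forward along $A\otimes_kA\to A$, $a\otimes b\mapsto ab$... more carefully, along the $A$-algebra structure map, to an element of $F(R)$. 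I would check this morphism is compatible with the descent data (the canonical one on $(F_0)_A$ on one side, $\phi$ on the other), using the cocycle identity $\phi_{13}=\phi_{12}\circ\phi_{23}$ — this is where the cocycle condition is genuinely needed.

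The two things left are: (a) $(F_0)_A\to F$ is an isomorphism, and (b) for a sheaf $G_0$ over $k$, the natural map $G_0\to (G_0)_{A,0}$ (the $F_0$-construction applied to $(G_0)_A$ with its canonical descent datum) is an isomorphism. For (b): $((G_0)_A)_0(R)$ is the equalizer of $G_0(R\otimes_kA)\rightrightarrows G_0(R\otimes_kA\otimes_kA)$ with the two maps now induced simply by the two inclusions $R\otimes_kA\rightrightarrows R\otimes_kA\otimes_kA$ over $R\otimes_kA$ — wait, more precisely the canonical descent datum makes $\phi$ the identity-type map, so the two maps become the two coprojections — and since $R\otimes_kA$ is faithfully flat over $R$ and $G_0$ is a sheaf, the sheaf axiom (1) for $G_0$ at $R\to R\otimes_kA$ says exactly that $G_0(R)$ is this equalizer. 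For (a): to show $(F_0)_A(R)\to F(R)$ is bijective for every $A$-$\s$-algebra $R$, I would use that $A$ is faithfully flat over $k$, so $R\to R\otimes_kA$ is faithfully flat (base change of $k\to A$ along $k\to R$), hence the sheaf property of $F$ at $R\to R\otimes_kA$ identifies $F(R)$ with the equalizer of $F(R\otimes_kA)\rightrightarrows F(R\otimes_kA\otimes_AR\otimes_kA)$; then one translates this equalizer, via $\phi$ and the cocycle condition, into exactly the equalizer defining $(F_0)_A(R)=F_0(R)$, reindexing the triple tensor products appropriately.

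The main obstacle I expect is bookkeeping: keeping straight the several structure maps between $R\otimes_kA$, $R\otimes_kA\otimes_kA$, $R\otimes_kA\otimes_AR\otimes_kA$ and so on, and checking at each stage that the relevant ring homomorphisms are morphisms of $\s$-rings so that $F$ and $\phi$ can legitimately be evaluated — but since $\s$ acts diagonally on all tensor products, every structure map that appears is automatically $\s$-equivariant, so in fact no new difficulties arise beyond the classical scheme case, and the functoriality of the sheaf axiom and of $\phi$ does all the real work. The one place where care is genuinely required, rather than just routine, is the repeated invocation of the cocycle identity $\phi_{13}=\phi_{12}\circ\phi_{23}$ to glue the descent data coherently, both when constructing the morphism $(F_0)_A\to F$ and when proving it is an isomorphism.
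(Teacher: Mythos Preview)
Your proposal is correct and follows essentially the same route as the paper: define $F_0(R)$ as the equalizer of the two maps $F(R\otimes_k A)\rightrightarrows F(R\otimes_k A\otimes_k A)$ built from the two insertions and $\phi$, verify the sheaf axioms for $F_0$ by reducing to those for $F$, and then use the cocycle condition together with the sheaf property of $F$ along the faithfully flat map $S\to S\otimes_k A$ to identify $(F_0)_A$ with $F$ compatibly with the descent data. The paper realizes the isomorphism $(F_0)_A\cong F$ directly as the restriction of $\phi_{S\otimes_k A}$ (rather than via a multiplication map) and checks full faithfulness separately instead of your unit--counit verification, but these are presentational differences; the one place your sketch needs care is that in your equalizer you have composed $\phi$ with the insertion $r\otimes a\mapsto r\otimes a\otimes 1$ rather than with $r\otimes a\mapsto r\otimes 1\otimes a$, which is the side on which the $A$-structures actually match the source of $\phi$---precisely the sort of bookkeeping you already flagged.
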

\begin{proof}
 Let $F$ be a sheaf over $A$ with descent datum $\phi \colon A \te F \to F \te A$. We assign to $(F,\phi)$ a functor $F_0 \colon k$-$\s$-$\Alg\to\Sets$ as follows. For every $k$-$\s$-algebra $R$, consider the diagram 
\begin{equation}
\label{eqn: diagram descent}
\xymatrix{ (A\te F)(R \te A \te A)  \ar@{->}[rr]^{\ \phi_{R\otimes  A \otimes A} \ } && (F\te A)(R \te A \te A)   \\
            & F(R\te A)  \ar@{->}[ul]^{\psi_1} \ar@{->}[ur]_{\psi_2} & } \end{equation}
    where $\psi_1$ and $\psi_2$ are obtained from the morphisms $R \te A\rightarrow R \te A\te A$ that map $r\otimes a$ to $r\otimes 1 \otimes a$, or to $r\otimes a\otimes 1$; and define 
\[F_0(R)=\{x \in F(R\te A) \mid \phi_{R\otimes A\otimes A}(\psi_1(x))=\psi_2(x)\}. \]
We claim that $F_0$ is a sheaf over $k$ such that $(F_0)_A$ is isomorphic to $F$ as a sheaf over $A$ with descent datum. Let $R \to S$ be a faithfully flat morphism of $k$-$\s$-algebras. Then $R\otimes_k A \to S\otimes_k A$ is a faithfully flat morphisms of $A$-$\s$-algebras, so 
\[F(R\te A) \to F(S\te A) \rightrightarrows F((S\te A)\otimes_{(R\otimes_k A)}(S\te A)) \] is exact, since $F$ is a sheaf. Thus the sequence 
\[F(R\te A) \to F(S\te A) \rightrightarrows F(S\otimes_R S\te A) \] is exact and it follows that 
\[F_0(R) \to F_0(S) \rightrightarrows F_0(S\otimes_R S) \] is exact.
To verify the second property in the definition of a sheaf, let $(R_i)$ be a finite family of \ks-algebras. Using the compatibility of tensor products and products and the fact that $F$ is a sheaf, diagram (\ref{eqn: diagram descent}) (with $\prod R_i$ in place of $R$) becomes

$$
\xymatrix{ \prod(A\te F)(R_i \te A \te A)  \ar@{->}[rr] && \prod(F\te A)(R_i \te A \te A)   \\
	& \prod F(R_i\te A)  \ar@{->}[ul] \ar@{->}[ur] & } $$
This shows that $F_0$ is a sheaf.

To prove that $(F_0)_A$ is isomorphic to $F$ as a sheaf with descent datum, we show that for every $A$-$\s$-algebra $S$, the bijection $\phi_{S\otimes A} \colon F(S\te A) \to F(S\te A)$ restricts to a bijection $F_0(S) \to F(S)$. (Note that $F(S)$ injects into $F(S\te A)$ since $S\to S\te A$ is faithfully flat.) The two canonical morphisms $S\te A \rightrightarrows S\te A \te A$ induce maps 
\begin{eqnarray*}
\beta_1\colon (A\te F)(S\te A) \to (A\te A\te F)(S\te A \te A) \\
\beta_2\colon (A\te F)(S \te A) \to (A\te F\te A)(S\te A\te A) \\
\gamma_1\colon (F\te A)(S \te A) \to (F\te A\te A)(S\te A \te A) \\
\gamma_2\colon (F\te A)(S \te A) \to (F\te A\te A)(S\te A \te A) 
\end{eqnarray*}
and we consider the following diagram.
\[\xymatrix{ (A\te F)(S \te A ) \ar@{->}[d]_{\ \phi_{S\otimes A} \ } \ar@<2pt>[rr]^{\beta_1 \ \ \ \ } \ar@<-2pt>[rr]_{\ \phi_{23}^{-1}\circ\beta_2 \ \ \ \ \ \ \  }  && (A\te A\te  F)(S \te A \te A) \ar@{->}[d]^{\ \phi_{13} \ } \\
             (F\te A)(S \te A )  \ar@<2pt>[rr]^{\ \gamma_1 \ \ \ \ \ \ \  } \ar@<-2pt>[rr]_{\ \gamma_2 \ \ \ \ \ \ \  } && (F\te A\te  A)(S \te A \te A) } \] The diagram commutes with the top arrows and the cocycle condition implies that it also commutes with the bottom arrows. As the vertical arrows are isomorphisms, we conclude that $\phi_{S\otimes A}$ maps the equalizer of the upper horizontal arrows to the equalizer of the lower horizontal arrows. The upper equalizer equals $F_0(S)$ and the lower equalizer can be identified with $F(S)$, since $F$ is a sheaf and $S\te A$ is faithfully flat over $S$. Hence $(F_0)_A$ is isomorphic to $F$ and the cocycle condition implies that this isomorphism is compatible with the descent data.

We conclude that the base change functor is essentially surjective. On the other hand, if $F_0$ and $G_0$ are sheaves over $k$ and if we denote $F=(F_0)_A$ and $G=(G_0)_A$, then it is easy to check that every morphism $F\to G$ of sheaves with $A/k$-descent data is the base change of a unique morphism $F_0\to G_0$ of sheaves over $k$. Hence the base change functor is full and faithful and the claim follows. 
\end{proof}

\begin{lem}\label{Abstieg von affinen Schemata}
 Let $F_0\colon k$-$\s$-$\Alg \to \Sets$ be a sheaf over $k$ such that $F=(F_0)_A$ is representable, that is, there exists an $A$-$\s$-algebra $B$ such that $F\cong\Hom(B,-)$. Then $F_0$ is representable.
\end{lem}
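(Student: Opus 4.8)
The plan is to descend the representing object $B$ along the faithfully flat morphism $A \to A\otimes_k A$ using Theorem \ref{abstieg}, now applied on the side of \ks-algebras rather than sheaves. The descent datum $\phi\colon A\te F \to F\te A$ coming from the hypothesis that $F = (F_0)_A$ is the base change of a sheaf over $k$ transports, via the Yoneda lemma, to an isomorphism of $(A\otimes_k A)$-$\s$-algebras. Concretely: $A\te F$ is represented by $A\otimes_k B$ (base change of $B$ to $A\otimes_k A$ along $A\otimes_k A\to A\otimes_k A$, $a\otimes a'\mapsto a'\otimes a$ in the appropriate indexing), and $F\te A$ is represented by $B\otimes_k A$; so $\phi$ corresponds to an isomorphism $\theta\colon B\otimes_k A \xrightarrow{\sim} A\otimes_k B$ of $(A\otimes_k A)$-$\s$-algebras. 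The cocycle condition $\phi_{13}=\phi_{12}\circ\phi_{23}$ for $\phi$ translates (again by Yoneda, which is contravariant, so the composition order flips) into the cocycle condition for $\theta$ over $A\otimes_k A\otimes_k A$.

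First I would record that faithfully flat descent for $\s$-modules holds: if $M_0$ is a $k$-$\s$-module, descent data for $A\otimes_k A/A$ on the $A$-$\s$-module $A\otimes_k M_0$ are equivalent to such $M_0$, and conversely every $A$-$\s$-module with descent datum descends. This is the difference-algebra analogue of ordinary faithfully flat descent for modules; it follows by taking the classical descended $k$-module $B_0 = \{b\in B \mid \theta(b\otimes 1) = 1\otimes b\}$ and observing that the action of $\s$ on $B$, being compatible with the descent datum (because $\theta$ is a morphism of $\s$-algebras), restricts to $B_0$. One can also phrase this as an instance of Theorem \ref{abstieg} applied to the sheaf represented by the underlying affine $\s$-scheme of $B$, together with the fact (used repeatedly in the paper) that representability is what we are trying to establish—so I would instead argue directly at the level of rings. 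The multiplicative structure, unit, and $\s$-action on $B$ all descend because $\theta$ is a morphism of $\s$-algebras and descent of modules is compatible with tensor products and with $\s$; hence $B_0$ is a $k$-$\s$-algebra with $B_0\otimes_k A \cong B$ compatibly with descent data.

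It then remains to check that $\Hom_{k\text{-}\s}(B_0,-)\colon k\text{-}\s\text{-}\Alg\to\Sets$ is isomorphic to $F_0$ as functors. Both are sheaves over $k$ (the representable one by \cite[Cor. 5.1.4]{Wibmer:Habil}, $F_0$ by hypothesis), and after base change to $A$ both become $F \cong \Hom(B,-)$ compatibly with the canonical, resp.\ given, descent data. By the full faithfulness in Theorem \ref{abstieg}, an isomorphism of sheaves over $A$ respecting descent data descends to an isomorphism over $k$; applying this to the identity isomorphism between $(F_0)_A$ and $(\Hom_{k\text{-}\s}(B_0,-))_A \cong \Hom(B,-) = F$ yields $F_0 \cong \Hom_{k\text{-}\s}(B_0,-)$, so $F_0$ is representable.

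The main obstacle I anticipate is purely bookkeeping rather than conceptual: keeping straight the three $(A\otimes_k A)$-algebra structures on $A\otimes_k A\otimes_k A$ and verifying that the cocycle identity for $\phi$ really does yield the standard cocycle identity $\theta_{13} = \theta_{12}\circ\theta_{23}$ for $\theta$ after dualizing, so that classical faithfully flat descent of rings applies verbatim and the descended object is automatically a $\s$-ring because every map in sight commutes with $\s$. Once that translation is set up carefully, no genuinely new difference-algebraic input is needed beyond Theorem \ref{abstieg} and the observation that $\s$ passes to the descended object.
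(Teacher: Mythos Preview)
Your proposal is correct and follows essentially the same route as the paper: dualize the canonical descent datum on $F=(F_0)_A$ via Yoneda to an $(A\otimes_k A)$-$\s$-algebra isomorphism $\phi^*\colon B\otimes_k A\to A\otimes_k B$, define $B_0=\{b\in B\mid \phi^*(b\otimes 1)=1\otimes b\}$, invoke classical faithfully flat descent for rings to get $A\otimes_k B_0\cong B$, note that $\s$ restricts to $B_0$ since all maps involved are $\s$-equivariant, and conclude $F_0\cong\Hom(B_0,-)$ by the full faithfulness in Theorem~\ref{abstieg}. The paper's proof is terser but structurally identical, including the reduction to the non-difference statement plus the observation that $\s$ passes through.
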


\begin{proof}
 Let $\phi \colon A\te F \to F \te A$ denote the canonical descent datum on $F=(F_0)_A$ and let $\phi^* \colon B\te A \to A \te B$ be the corresponding isomorphism of $(A\te A)$-$\s$-algebras. We consider the $k$-$\s$-algebra \[B_0=\{b \in B \mid \phi^*(b\otimes 1)=1\otimes b\}.\] We claim that $\Hom(B_0,-)\cong F_0$. Theorem \ref{abstieg} implies that it suffices to show that $\Hom(A\te B_0,-)\cong \Hom(B,-)$ as sheaves over $A$ with $A/k$-descent data.
 Using the $(A\te A\te A)$-$\s$-isomorphisms
 \begin{eqnarray*}
 	\phi_{23}^*=(\phi_{23})^*\colon A\te B \te A \to A \te A \te B \\
 	\phi_{12}^*=(\phi_{12})^*\colon B\te A \te A \to A \te B \te A \\
 	\phi_{13}^*=(\phi_{13})^*\colon B\te A \te A \to A \te A \te B 
 \end{eqnarray*}
 the latter statement can be rephrased using morphisms of $\s$-rings only.
 
  We have to show that the canonical map $A\otimes_k B_0\to B$ is an isomorphism compatible with the descent data. If we disregard the action of $\s$ this statement is well known. (See e.g., \cite[Tag 0238]{stacks-project}.) So the claim follows since all the involved maps are compatible with $\s$. 
 
\end{proof}

\begin{cor}\label{cor: Abstieg von Varietaeten}
	 The base change from $k$ to $A$ defines an equivalence of categories from the category of $\s$-varieties over $k$ to the category of $\s$-varieties over $A$ with $A/k$-descent data.
	
\end{cor}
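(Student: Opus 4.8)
The plan is to deduce Corollary~\ref{cor: Abstieg von Varietaeten} from Theorem~\ref{abstieg} and Lemma~\ref{Abstieg von affinen Schemata} by restricting the equivalence of categories of Theorem~\ref{abstieg} to the relevant full subcategories. Recall that any \ks-variety is representable by a finitely $\s$-generated \ks-algebra, hence in particular is a sheaf (as noted after the definition of sheaf), so the category of \ks-varieties over $k$ is a full subcategory of the category of sheaves over $k$, and likewise over $A$. Under base change, a \ks-variety $X$ over $k$ represented by $k\{X\}$ goes to the $A$-$\s$-variety $X_A$ represented by $k\{X\}\otimes_k A$, which is again finitely $\s$-generated over $A$; and the canonical $A/k$-descent datum on $X_A$ is one of the $A/k$-descent data in the target category. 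So the base change functor of Theorem~\ref{abstieg} does restrict to a functor from \ks-varieties over $k$ to \ks-varieties over $A$ with $A/k$-descent data.

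First I would observe that this restricted functor is automatically full and faithful, since it is the restriction of the full and faithful functor of Theorem~\ref{abstieg} to full subcategories. So the only thing left is essential surjectivity: given an $A$-$\s$-variety $F$ together with an $A/k$-descent datum $\phi$, I must produce a \ks-variety $X$ over $k$ with $(X_A,\text{canonical datum})\cong (F,\phi)$. By Theorem~\ref{abstieg} there is a sheaf $F_0$ over $k$ with $(F_0)_A\cong F$ compatibly with descent data; the content is to show $F_0$ is in fact a \ks-variety, i.e.\ representable by a finitely $\s$-generated \ks-algebra. Representability itself is exactly Lemma~\ref{Abstieg von affinen Schemata}, applied with $B=A\{F\}$ the (finitely $\s$-generated) coordinate ring of $F$ over $A$: it yields a \ks-algebra $B_0$ with $\Hom(B_0,-)\cong F_0$.

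The remaining point — and the step I expect to be the only mildly nontrivial one — is that $B_0$ is \emph{finitely $\s$-generated} over $k$, so that $F_0$ qualifies as a \ks-variety and not merely an affine $\s$-scheme. Here I would invoke faithful flatness of $A$ over $k$ together with the isomorphism $A\otimes_k B_0\xrightarrow{\sim} B$ established in the proof of Lemma~\ref{Abstieg von affinen Schemata}: since $B$ is finitely $\s$-generated over $A$, say $B=A\{b_1,\dots,b_n\}$, a standard faithfully flat descent argument for the property ``finitely generated as an algebra'' — applied to the chain of $k$-algebras $k\{y_1,\dots,y_n\}\to B_0\to B$ and the fact that $k\{y_1,\dots,y_n\}\otimes_k A\to B_0\otimes_k A=B$ is surjective, hence $k\{y_1,\dots,y_n\}\to B_0$ is surjective by faithful flatness — shows $B_0$ is a quotient of a $\s$-polynomial ring over $k$ in finitely many $\s$-variables, hence finitely $\s$-generated. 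Concretely: choose $\s$-polynomial preimages, build a map $k\{y_1,\dots,y_n\}\to B_0$ whose base change to $A$ is surjective onto $B$, and descend surjectivity. With $B_0$ finitely $\s$-generated over $k$, the functor $F_0=\Hom(B_0,-)$ is a \ks-variety, the compatible isomorphism $(F_0)_A\cong F$ from Theorem~\ref{abstieg} is an isomorphism of $\s$-varieties with descent data, and essential surjectivity — hence the claimed equivalence of categories — follows.
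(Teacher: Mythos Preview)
Your proof is correct and follows essentially the same approach as the paper: restrict the equivalence of Theorem~\ref{abstieg} to the full subcategories of $\s$-varieties, invoke Lemma~\ref{Abstieg von affinen Schemata} for representability, and then descend finite $\s$-generation of $B_0$ via faithful flatness of $A$ over $k$. The paper's argument for this last step is nearly identical --- it observes that a finite $\s$-generating set $T$ of $B=B_0\otimes_k A$ over $A$ may be modified to lie in $B_0$ (by replacing each generator by the $B_0$-components of a representation as a sum of simple tensors), so that $k\{T\}\otimes_k A=B_0\otimes_k A$ and hence $k\{T\}=B_0$ --- which is precisely your descent-of-surjectivity argument made explicit.
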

\begin{proof}
	As the \ks-varieties are exactly the sheaves over $k$ that can be represented by a finitely $\s$-generated \ks-algebra, by Theorem \ref{abstieg} and Lemma \ref{Abstieg von affinen Schemata} it suffices to show the following: If $B_0$ is a \ks-algebra such that $B=B_0\otimes_k A$ is finitely $\s$-generated over $A$, then $B_0$ is finitely $\s$-generated over $k$.
	
As $B$ is $\s$-generated over $A$ by some finite subset $T$ of $B_0\otimes_k A$, we can modify $T$ if necessary, to assume $T\subseteq  B_0$.
Then $k\{T\}\subseteq B_0$ and $k\{T\}\te A =B_0\te A$. Hence $k\{T\}=B_0$, since $A$ is faithfully flat over $k$. 
\end{proof}

If $X$ and $Y$ are $A$-$\s$-varieties with $A/k$-descent data, $\phi\colon A\otimes_k X\to X\otimes_k A$ and $\rho \colon A\te Y \to Y \te A$ then $\phi\times \rho$ defines a descent datum on $X\times Y$ and this is the product in the category of $A$-$\s$-varieties with $A/k$-descent data.

\begin{rem} \label{rem: descent for torsors}
	By Corollary \ref{cor: Abstieg von Varietaeten} a $\s$-algebraic group $G_0$ over $k$ corresponds to a group object in the category of $\s$-varieties over $A$ with descent datum, i.e., to a $\s$-algebraic group $G$ over $A$ together with an $A/k$-descent datum such that the multiplication $G\times G\to G$, the inversion $G\to G$ and the identity $1\to G$ are compatible with the descent data. Similarly, if $X$ is a $G$-torsor equipped with a descent datum such that the action $G\times X\to X$ is compatible with the descent data, then $X$ descends to a $G_0$-torsor $X_0$, since the isomorphism $G\times X\to X\times X,\ (g,x)\mapsto (g.x, x)$ descends to an isomorphism  $G_0\times X_0\to X_0\times X_0$.
\end{rem}

For later use we record:

\begin{lem} \label{lem: sclosed embedding descends}
	Let $\phi\colon X\to Y$ be a morphism of \ks-varieties. If the base extension $\phi_A\colon X_A\to Y_A$ is a $\s$-closed embedding, then $\phi$ is a $\s$-closed embedding. 
\end{lem}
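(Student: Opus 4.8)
The plan is to reduce the statement to the already-established fact (Lemma 1.2.7 in \cite{Wibmer:Habil}, quoted in the excerpt) that a morphism $\phi\colon X\to Y$ of \ks-varieties is a $\s$-closed embedding if and only if the dual map $\phi^*\colon k\{Y\}\to k\{X\}$ of coordinate rings is surjective. So the real content is: if $\phi^*\otimes_k A\colon k\{Y\}\otimes_k A\to k\{X\}\otimes_k A$ is surjective, then $\phi^*$ itself is surjective. First I would observe that the base extension $\phi_A\colon X_A\to Y_A$ is represented by the $A$-$\s$-algebra map $\phi^*\otimes_k \id_A\colon k\{Y\}\otimes_k A\to k\{X\}\otimes_k A$, which is simply $\phi^*$ tensored up along the faithfully flat map $k\to A$ (recall $X_A$ is represented by $k\{X\}\otimes_k A$ and likewise for $Y_A$, as noted in Section 1).

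The key step is then a purely ring-theoretic descent statement for surjectivity along a faithfully flat extension. Concretely, let $C=\operatorname{coker}(\phi^*)$ as a $k\{Y\}$-module; tensoring the exact sequence $k\{Y\}\to k\{X\}\to C\to 0$ with the flat $k$-algebra $A$ gives an exact sequence $k\{Y\}\otimes_k A\to k\{X\}\otimes_k A\to C\otimes_k A\to 0$, so $C\otimes_k A$ is the cokernel of $\phi^*\otimes_k\id_A$. By hypothesis this cokernel vanishes, hence $C\otimes_k A=0$, and since $A$ is faithfully flat over $k$ this forces $C=0$, i.e. $\phi^*$ is surjective. (Here $C$ is naturally a $k$-module via $k\to k\{Y\}$, so faithful flatness of $A$ over $k$ applies directly.) Note that no difference structure intervenes in this argument: surjectivity is a statement about the underlying $k$-modules, and flatness and faithful flatness of $A$ are assumed only at the level of rings forgetting $\s$, exactly as in the standing conventions of the excerpt.

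Finally I would assemble the pieces: $\phi_A$ a $\s$-closed embedding $\Rightarrow$ $\phi^*\otimes_k\id_A$ surjective (by the coordinate-ring criterion applied over $A$) $\Rightarrow$ $\phi^*$ surjective (by the faithfully flat descent of surjectivity just established) $\Rightarrow$ $\phi$ a $\s$-closed embedding (by the coordinate-ring criterion over $k$). There is essentially no obstacle here; the only point requiring a moment's care is making sure the coordinate ring of $Y_A$ really is $k\{Y\}\otimes_k A$ and that the representing morphism of $\phi_A$ is the evident base change of $\phi^*$, which is immediate from the construction of base change recalled in Section 1. One could also phrase the whole argument without coordinate rings, using instead that $\phi$ is a $\s$-closed embedding iff it identifies $X$ with a $\s$-closed $\s$-subvariety, together with descent of $\s$-closed $\s$-subvarieties from Corollary \ref{cor: Abstieg von Varietaeten}; but the coordinate-ring route is shorter and self-contained.
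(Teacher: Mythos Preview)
Your proposal is correct and follows exactly the same route as the paper: reduce to the coordinate-ring criterion for $\s$-closed embeddings, then use faithful flatness of $A$ over $k$ to deduce surjectivity of $\phi^*$ from surjectivity of $\phi^*\otimes_k A$. The paper's proof is the one-line version of your argument, omitting the explicit cokernel computation.
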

\begin{proof}
	If $k\{Y\}\otimes_k A\to k\{X\}\otimes_k A$ is surjective, then $k\{Y\}\to k\{X\}$ is surjective since $A$ is faithfully flat over $k$.
\end{proof}

\section{Cohomology}\label{sec 3}
In this section, we introduce the notion of cohomology sets for $\s$-algebraic groups. The definition is analogous to the cohomology sets for (not necessarily reduced) algebraic groups. (See e.g., \cite[III, \S4, 6.7]{DG} or \cite[Chapter III, \S 4]{Milne}.) Throughout this section, let $k$ be a $\s$-ring and let $G$ be a $\s$-algebraic group over $k$.\\
\\
Let $A$ be a $k$-$\s$-algebra. Consider the group homomorphisms
\[\delta_1, \delta_2 \colon G(A) \to G(A\te A),  \] where $\delta_1$ is induced by $A \to A\te A, \ a \mapsto 1\otimes a$ and $\delta_2$ is induced by $A \to A\te A, \ a \mapsto a\otimes 1$.
Similarly, consider the three group homomorphisms
\[\del_1, \del_2, \del_3 \colon G(A\te A) \to G(A\te A \te A), \] where $\del_1$ is induced by $A\te A \to A \te A \te A, \ a\otimes b \mapsto 1 \otimes a \otimes b$ etc.
\begin{Def}
Let $A$ be a $k$-$\s$-algebra.
The set of \emph{1-cocycles} of $G$ with respect to $A/k$ is defined as
\[\operatorname{Z}_\s^1(A/k,G)=\{\chi \in G(A\te A) \mid \del_2(\chi)=\del_1(\chi)\del_3(\chi) \}.\]
If $\chi$ is a cocycle and $\alpha\in G(A)$ it is straight forward to check that $\delta_1(\alpha)\chi\delta_2(\alpha)^{-1}$ is a cocycle.

 Two cocycles $\chi_1$ and $\chi_2$ are called \emph{equivalent}, if there is an $\alpha \in G(A)$ such that $\chi_1=\delta_1(\alpha)\chi_2\delta_2(\alpha)^{-1}$. The \emph{1-cohomology set} of $G$ with respect to $A/k$ is defined as the set of equivalence classes of 1-cocycles: \[\h^1(A/k, G)=\operatorname{Z}_\s^1(A/k,G)/\sim.\] 
\end{Def}


Recall that a pointed set is simply a set with a distinguished element. A morphism of pointed sets is a map of sets that preserves the distinguished element.

We consider $\h^1(A/k, G)$ as a pointed set, the distinguished element being the equivalence class of $1\in G(A\otimes_k A)$. If $G$ is abelian, the product of two cocycles is again a cocycle and the multiplication in $G(A\otimes_k A)$ induces a group structure on $\h^1(A/k, G)$ and we consider $\h^1(A/k, G)$ as an abelian group.


\begin{ex} \label{ex: H1 von Ga trivial}
If $A/k$ is faithfully flat, then $\h^1(A/k, \Ga)= \{0\}$. 
\end{ex}
\begin{proof}
	Here $\Ga$ is the $\s$-algebraic group given by $\Ga(R)=(R, +)$ for any \ks-algebra $R$. 
 The assertion follows directly from the fact that the following sequence is exact
\[\xymatrix{0 \ar@{->}[r] & k \ar@{->}[r] & A \ar@{->}[rr]^{ \delta_2-\delta_1 \ \ }  && A \te A   \ar@{->}[rr]^{\ \del_3-\del_2+\del_1 \ \ \ \ }   && A \te A \te A \ar@{->}[r]& \dots  } \]
 (See for example \cite[Chapter I, \S1, Section 2.7]{DG}.)
\end{proof}

\begin{rem} If $A \to B$ is a morphism of $k$-$\s$-algebras, then there is a natural map of pointed sets $\h^1(A/k, G)\to \h^1(B/k, G)$. We will see later (Lemma \ref{lem: compatibility}) that this map is injective and does not depend on the choice of the morphism $A \to B$.
%
 \end{rem}

\begin{rem} If $G \to H$ is a morphism of $\s$-algebraic groups, then there is a natural map of pointed sets $\h^1(A/k, G)\to \h^1(A/k, H)$.
 \end{rem}

\begin{prop}\label{algebr Gruppen}
Let $\G$ be an algebraic group over $k^\sharp$, that is, an affine group scheme of finite type over $k$ and let $G$ be the corresponding $\s$-algebraic group over $k$ (see Example \ref{ex algebr Gruppe}). Then 
\[\h^1(A/k,G)=\operatorname{H}^1(A^\sharp/k^\sharp, \G) \] for every $k$-$\s$-algebra $A$.
\end{prop}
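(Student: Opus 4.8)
The plan is to observe that every ingredient in the definition of $\h^1(A/k,G)$ --- the groups $G(A)$, $G(A\te A)$, $G(A\te A\te A)$ together with the homomorphisms $\delta_1,\delta_2$ and $\del_1,\del_2,\del_3$ --- agrees on the nose with the corresponding ingredient in the (Amitsur/\v{C}ech-style) definition of $\operatorname{H}^1(A^\sharp/k^\sharp,\G)$ used for affine (possibly non-reduced) group schemes, as in \cite[III, \S4, 6.7]{DG}. So the proof is really just a matter of matching up the building blocks; there is no substantive difficulty, only bookkeeping.

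First I would record the elementary compatibility of the forgetful operation $(-)^\sharp$ with tensor products over $k$: since the $\s$-structure on a tensor product $R\otimes_k S$ of \ks-algebras is defined on the underlying ring $R^\sharp\otimes_{k^\sharp}S^\sharp$ by $\s(r\otimes s)=\s(r)\otimes\s(s)$ without altering that ring, one has $(R\otimes_k S)^\sharp=R^\sharp\otimes_{k^\sharp}S^\sharp$ as $k^\sharp$-algebras, and likewise for triple tensor products. Combining this with the definition $G(R)=\G(R^\sharp)$ from Example \ref{ex algebr Gruppe}, one obtains
$$G(A)=\G(A^\sharp),\qquad G(A\te A)=\G(A^\sharp\otimes_{k^\sharp}A^\sharp),\qquad G(A\te A\te A)=\G(A^\sharp\otimes_{k^\sharp}A^\sharp\otimes_{k^\sharp}A^\sharp).$$

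Next I would check that the structure maps coincide. The homomorphism $\del_1\colon G(A\te A)\to G(A\te A\te A)$ is induced by the $k$-$\s$-algebra map $A\te A\to A\te A\te A$, $a\otimes b\mapsto 1\otimes a\otimes b$; after forgetting $\s$, this is exactly the ring homomorphism $A^\sharp\otimes_{k^\sharp}A^\sharp\to A^\sharp\otimes_{k^\sharp}A^\sharp\otimes_{k^\sharp}A^\sharp$, $a\otimes b\mapsto 1\otimes a\otimes b$, occurring in the classical definition of cocycles for $\G$. Hence, under the identifications above, $\del_1$ is the classical face map $\G(A^\sharp\otimes_{k^\sharp}A^\sharp)\to\G(A^\sharp\otimes_{k^\sharp}A^\sharp\otimes_{k^\sharp}A^\sharp)$, and the same holds for $\del_2,\del_3$ and for $\delta_1,\delta_2$. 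Consequently the cocycle condition $\del_2(\chi)=\del_1(\chi)\del_3(\chi)$ defining $\z^1(A/k,G)$ is literally the classical cocycle condition, so $\z^1(A/k,G)=\operatorname{Z}^1(A^\sharp/k^\sharp,\G)$; and the relation $\chi_1=\delta_1(\alpha)\chi_2\delta_2(\alpha)^{-1}$ for some $\alpha\in G(A)=\G(A^\sharp)$ is the classical coboundary relation. Passing to equivalence classes yields $\h^1(A/k,G)=\operatorname{H}^1(A^\sharp/k^\sharp,\G)$, and this identification preserves the distinguished elements (the class of $1$ on either side).

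I do not expect any genuine obstacle: the entire content is the identity $(R\otimes_k S)^\sharp=R^\sharp\otimes_{k^\sharp}S^\sharp$ together with the observation that the six structure maps used on the two sides are induced by the very same underlying ring homomorphisms. The only point I would state explicitly is which classical definition of $\operatorname{H}^1$ of an affine group scheme is meant --- namely the Amitsur/\v{C}ech-type definition of \cite[III, \S4, 6.7]{DG} --- since that is precisely the one whose form the $\s$-version was designed to mirror.
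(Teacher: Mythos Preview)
Your proposal is correct and follows essentially the same approach as the paper: both arguments identify $G(A\te A)=\G(A^\sharp\otimes_{k^\sharp}A^\sharp)$ etc.\ and observe that the face maps $\delta_i,\del_j$ are induced by the same underlying ring homomorphisms, so that the cocycle sets and the coboundary relations coincide on the nose. The paper's proof is just a slightly more compressed version of what you wrote, citing \cite[Chapter III, \S4, Section 6.4]{DG} for the classical definition.
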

\begin{proof}
Let $A$ be a $k$-$\s$ algebra. Then $\delta_1,\delta_2 $ induce morphisms $\delta_1^\sharp,\delta_2^\sharp \colon \G(A^\sharp) \to \G(A^\sharp \otimes_{k^\sharp} A^\sharp)$ and $\del_1,\del_2,\del_3$ induce morphisms $\del_1^\sharp,\del_2^\sharp,\del_3^\sharp \colon \G(A^\sharp\otimes_{k^\sharp} A^\sharp) \to \G(A^\sharp \otimes_{k^\sharp} A^\sharp \otimes_{k^\sharp} A^\sharp)$. 

The $1$-cohomology set $\operatorname{H}^1(A^\sharp/k^\sharp, \G)$ is defined as $\operatorname{H}^1(A^\sharp/k^\sharp, \G)=\operatorname{Z}^1(A^\sharp/k^\sharp, \G)/\sim$,
where $\operatorname{Z}^1(A^\sharp/k^\sharp, \G)=\{\chi \in \G(A^\sharp\otimes_{k^\sharp} A^\sharp) \mid \del_2^\sharp(\chi)=\del_1^\sharp(\chi)\del_3^\sharp(\chi) \}$ and two cocycles $\chi$, $\tilde \chi$ are called equivalent, if there is an $\alpha \in \G(A^\sharp)$ such that $\tilde \chi=\delta_1^\sharp(\alpha)\chi\delta_2^\sharp(\alpha)^{-1}$ (see for example \cite[Chapter III, \S4, Section 6.4]{DG}). Therefore, $\operatorname{Z}^1(A^\sharp/k^\sharp, \G)=\operatorname{Z}^1_\s(A/k, G)$ and $\operatorname{H}^1(A^\sharp/k^\sharp, \G)=\operatorname{H}^1_\s(A/k, G)$.
\end{proof}

\begin{cor} \label{cor: alg Gr triviale H1}
Let $k$ be a $\s$-field and let $A$ be a $k$-$\s$-algebra. Then $\h^1(A/k,G)$ is trivial for $G$ defined as in Proposition \ref{algebr Gruppen} for each of the following algebraic groups $\G$: $\Gm, \Ga,\GL_n, \SL_n$.
\end{cor}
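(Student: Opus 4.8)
The plan is to transfer everything to classical (fppf) cohomology of algebraic groups via Proposition \ref{algebr Gruppen}, which gives $\h^1(A/k,G)=\operatorname{H}^1(A^\sharp/k^\sharp,\G)$, and then to exploit that $k^\sharp$ is a field. If $A=0$, then $A\otimes_k A=0$, the group $G(A\otimes_k A)$ is trivial, and there is nothing to prove; so I may assume $A\neq 0$, in which case $A^\sharp$ is a nonzero vector space over the field $k^\sharp=k$ and hence a faithfully flat $k$-algebra. For $\G=\Ga$ the claim is then immediate: the $\s$-algebraic group attached to $\Ga$ is precisely the group denoted $\Ga$ in Example \ref{ex: H1 von Ga trivial}, so that example (applied to the faithfully flat extension $A/k$) gives $\h^1(A/k,\Ga)=\{0\}$.

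For $\G=\Gm$ and $\G=\GL_n$ I would invoke the faithfully flat version of Hilbert's Theorem~90, i.e.\ faithfully flat descent for modules --- the non-abelian counterpart of the descent arguments already carried out in Theorem \ref{abstieg} and Lemma \ref{Abstieg von affinen Schemata}. Concretely, a $1$-cocycle $\chi\in\GL_n(A^\sharp\otimes_k A^\sharp)$ is exactly a descent datum on the free $A^\sharp$-module $(A^\sharp)^n$; by faithfully flat descent it comes from a finitely generated projective $k^\sharp$-module $M$ of rank $n$ with $M\otimes_k A^\sharp\cong (A^\sharp)^n$. Since $k^\sharp$ is a field, $M$ is free, and any identification $M\cong (k^\sharp)^n$ unwinds to an $\alpha\in\GL_n(A^\sharp)$ witnessing that $\chi$ is equivalent to $1$. (The case $\G=\Gm$ is $n=1$; alternatively one notes that $\operatorname{H}^1(A^\sharp/k^\sharp,\G)$ injects into $\operatorname{H}^1(k^\sharp,\G)$, which vanishes because every vector bundle over $\operatorname{Spec}k^\sharp$ is trivial. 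Compare \cite[Chapter III, \S4]{DG}.)

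For $\G=\SL_n$ I would bootstrap from the $\GL_n$-case. Given $\chi\in\SL_n(A^\sharp\otimes_k A^\sharp)$, regard it as a $\GL_n$-cocycle; by the previous paragraph it is equivalent to $1$, so by definition there is an $\alpha\in\GL_n(A^\sharp)$ with $\chi=\delta_1(\alpha)\delta_2(\alpha)^{-1}$. Applying $\det$ yields $\delta_1(\det\alpha)=\delta_2(\det\alpha)$ in $(A^\sharp\otimes_k A^\sharp)^\times$; since $A^\sharp$ is faithfully flat over $k$, the equalizer of $\delta_1,\delta_2\colon A^\sharp\rightrightarrows A^\sharp\otimes_k A^\sharp$ is $k$ (exactness at the first nontrivial term of the complex in Example \ref{ex: H1 von Ga trivial}), so $c:=\det\alpha\in k^\times$. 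Now put $\alpha':=\alpha\cdot\operatorname{diag}(c^{-1},1,\dots,1)\in\SL_n(A^\sharp)$; as $\operatorname{diag}(c^{-1},1,\dots,1)$ has entries in $k$, the homomorphisms $\delta_1$ and $\delta_2$ agree on it, whence $\delta_1(\alpha')\delta_2(\alpha')^{-1}=\delta_1(\alpha)\delta_2(\alpha)^{-1}=\chi$. Thus $\chi$ is equivalent to $1$ already in $\z^1(A/k,\SL_n)$, and $\h^1(A/k,\SL_n)$ is trivial.

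The one genuinely substantive point is the $\GL_n$-step: one needs descent for modules along the possibly huge, non-finite, non-Galois faithfully flat extension $k\to A^\sharp$, and one needs to match the elementary cocycle description of $\operatorname{H}^1(A^\sharp/k^\sharp,\GL_n)$ with the classification of twisted forms of $(A^\sharp)^n$. Both facts are standard (the latter is the exact non-abelian analogue of what is proved in Theorem \ref{abstieg}), so everything beyond that is routine bookkeeping; the $\Ga$-, $\Gm$- and $\SL_n$-cases then cost essentially nothing.
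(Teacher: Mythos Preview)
Your proof is correct and takes a genuinely different route from the paper's. The paper reduces, via Proposition~\ref{algebr Gruppen}, to $\operatorname{H}^1(A^\sharp/k^\sharp,\G)$ just as you do, but then argues indirectly: it invokes the fact that for smooth $\G$ over a field the Galois cohomology $\operatorname{H}^1(\Gamma,\G)$ classifies \emph{all} $\G$-torsors, cites the classical vanishing results (Hilbert~90 etc.) for $\Gm$, $\Ga$, $\GL_n$, $\SL_n$, and concludes that the subset $\operatorname{H}^1(A^\sharp/k^\sharp,\G)$ of torsors trivialized by $A^\sharp$ must vanish as well. Your argument is more hands-on: you identify a $\GL_n$-cocycle with a descent datum on $(A^\sharp)^n$, descend to a projective (hence free) $k$-module, and then bootstrap to $\SL_n$ by a direct determinant computation using exactness of the Amitsur complex. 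Your approach avoids the comparison between flat and Galois cohomology for smooth groups and stays closer to the descent machinery already developed in the paper; the paper's approach is shorter because it delegates the work to standard references. Both are perfectly sound.
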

\begin{proof}
Proposition \ref{algebr Gruppen} implies that it suffices to show that $\operatorname{H}^1(A^\sharp/k^\sharp,\G)$ is trivial.

Let $\Gamma$ denote the absolute Galois group of $k^\sharp$. It is well known that the Galois cohomology sets $\operatorname{H}^1(\Gamma,\G)$ are trivial for all of the above groups $\G$. (See e.g 
\cite[II.1.2, III.1.1]{serre} or \cite[Cor. 27.8 and 27.9]{milneiAG}.)
For smooth algebraic groups (and all our examples are smooth) these Galois cohomology sets classify $\G$-torsors (\cite[Chapter III, \S5, Section 3.6]{DG}).
As $\operatorname{H}^1(A^\sharp/k^\sharp,\G)$ classifies $\G$-torsors that become trivial over $A^\sharp$, also $\operatorname{H}^1(A^\sharp/k^\sharp,\G)$ has to be trivial.
\end{proof}

\begin{ex}\label{ex2}
Let $k$ be a $\s$-field and as in Example \ref{ex1} let $G$ be the $\s$-algebraic subgroup of the multiplicative group $\Gm$ given by
	 $$G(R)=\{g\in R^\times|\ g^2=1,\ \sigma(g)=g\}$$
	 for any \ks-algebra $R$.
We claim that for every $k$-$\s$-algebra $A$,
\[ \h^1(A/k, G)\cong\{\alpha \in A^\times \ | \ \alpha^2 \in k, \ \sigma(\alpha)/\alpha \in k \}/\sim  \]  where we write $\alpha \sim \alpha'$ if and only if there is a $\lambda \in k^\times$ with $(\alpha')^2=\lambda^2 \alpha^2$ and $\sigma(\alpha')/\alpha'=\sigma(\lambda)/\lambda \cdot \sigma(\alpha)/\alpha$.
Note that the right hand side is a group under multiplication and we claim to have an isomorphism of abelian groups.

If $\alpha\in A^\times$ satisfies $\alpha^2\in k$ and $\s(\alpha)/\alpha\in k$, then the element $\chi=\alpha^{-1}\otimes \alpha$ is contained in $G(A\te A)$ and it is a cocycle for $G$. Moreover, $\chi$ is equivalent to the trivial cocycle if and only if there exists a $\lambda \in k^\times$ with $\lambda\cdot \alpha\in G(A)$ or equivalently, if and only if $\alpha \sim 1$. The assignment $\alpha \mapsto \chi$ thus defines an injective group morphism from the right hand side to $\h^1(A/k, G)$.

Now let $\chi \in \operatorname{Z}^1_\s(A/k,G)\subseteq (A\te A)^\times$. Then $\chi$ can also be considered as an element in $\operatorname{Z}^1_\s(A/k,\Gm)$ and thus there is an $\alpha \in A^\times$ with $\alpha^{-1}\otimes \alpha=\chi$ by Corollary \ref{cor: alg Gr triviale H1}. Moreover, $\chi^2=1$ and $\sigma(\chi)=\chi$ imply
$\alpha^2 \in k$ and $\sigma(\alpha)/\alpha \in k$ and we obtain the desired bijection. 
\end{ex}

\begin{ex} \label{ex Ga H1(A)}
	Let $k$ be a $\s$-field and $\LL(y)=\s^n(y)+\lambda_{n-1}\s^{n-1}(y)+\ldots+\lambda_0y$ a linear difference equation over $k$. As in Example \ref{ex: Ga intro}, let $G$ be the difference algebraic subgroup of $\Ga$ defined by $\LL$, i.e.,
	$$G(R)=\{g\in R|\ \LL(g)=0\}\subseteq \Ga(R)$$
	for any \ks-algebra $R$. Let $A$ be a \ks-algebra.
	We claim that
	$$\h^1(A/k,G)\cong \{\alpha\in A|\ \LL(\alpha)\in k\}/\sim,$$
	as abelian groups, where $\alpha\sim \alpha'$ if and only if $\LL(\alpha'-\alpha)\in\LL(k)$.	
	
	If $\alpha\in A$ satisfies $\LL(\alpha)\in k$, then $\chi=1\otimes\alpha-\alpha\otimes 1\in G(A\otimes_k A)$ is a cocycle for $G$. Moreover, $\chi$ is equivalent to the trivial cocycle if and only if there exists a $\lambda \in k$ with $\alpha-\lambda\in G(A)$ and this is equivalent to $\alpha \sim 0$. The assignment $\alpha \mapsto \chi$ thus defines an injective group morphism from the right hand side to $\h^1(A/k, G)$.
	
Now let $\chi\in\operatorname{Z}_\s^1(A/k,G)$ be a cocycle. Since $\chi$ is trivial as a cocylce for $\Ga$ (Example \ref{ex: H1 von Ga trivial}), $\chi$ is of the form $\chi=1\otimes \alpha-\alpha\otimes 1$ for some $\alpha\in\Ga(A)=A$ and the claim follows.
\end{ex}

\begin{prop}\label{direktes Produkt}
Let $G$ and $H$ be $\s$-algebraic groups over $k$. Then there is a canonical bijective map $\h^1(A/k,G\times H) \to \h^1(A/k,G) \times \h^1(A/k, H)$.
\end{prop}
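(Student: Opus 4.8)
The plan is to show the bijection by constructing explicit mutually inverse maps, using the fact that the product $G \times H$ in the category of $\s$-varieties has coordinate ring $k\{G\}\otimes_k k\{H\}$, so that $(G\times H)(R) = G(R)\times H(R)$ for every \ks-algebra $R$, functorially in $R$. In particular, applying this with $R = A\otimes_k A$ and $R = A\otimes_k A\otimes_k A$, the maps $\delta_i$ and $\partial_j$ for $G\times H$ are simply the products of the corresponding maps for $G$ and for $H$. The first step is therefore to observe that an element $\chi \in (G\times H)(A\otimes_k A)$ is a pair $(\chi_G,\chi_H)$ with $\chi_G\in G(A\otimes_k A)$, $\chi_H\in H(A\otimes_k A)$, and that the cocycle condition $\partial_2(\chi)=\partial_1(\chi)\partial_3(\chi)$ holds for $\chi$ if and only if it holds separately for $\chi_G$ and for $\chi_H$, since the group operation in $(G\times H)(A\otimes_k A\otimes_k A)$ is componentwise. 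This already identifies $\z^1(A/k, G\times H)$ with $\z^1(A/k,G)\times\z^1(A/k,H)$ as sets.

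Next I would check that this set-level identification descends to the quotient by the equivalence relation. For $\alpha = (\alpha_G,\alpha_H)\in (G\times H)(A) = G(A)\times H(A)$, the twisted cocycle $\delta_1(\alpha)\chi\delta_2(\alpha)^{-1}$ has components $\delta_1(\alpha_G)\chi_G\delta_2(\alpha_G)^{-1}$ and $\delta_1(\alpha_H)\chi_H\delta_2(\alpha_H)^{-1}$ (again because everything is componentwise). Hence two cocycles $\chi=(\chi_G,\chi_H)$ and $\chi'=(\chi'_G,\chi'_H)$ for $G\times H$ are equivalent if and only if $\chi_G\sim\chi'_G$ and $\chi_H\sim\chi'_H$: necessity is immediate by projecting to the two factors, and sufficiency follows by taking the pair $(\alpha_G,\alpha_H)$ of the two witnessing elements. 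Passing to equivalence classes therefore yields a well-defined bijection $\h^1(A/k,G\times H)\to\h^1(A/k,G)\times\h^1(A/k,H)$, and it visibly sends the class of $1=(1,1)$ to the pair of distinguished elements, so it is a morphism of pointed sets; if $G$ and $H$ are both abelian it is moreover a group isomorphism.

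The argument is essentially bookkeeping, so there is no serious obstacle; the only point requiring any care is the compatibility of the various $\delta_i$ and $\partial_j$ with the product decomposition. This is where I would be most careful to spell things out: one must note that $A\otimes_k A$ and $A\otimes_k A\otimes_k A$ are \ks-algebras, that base change of the product functor $(G\times H)_{A\otimes_k A}$ is still a product, and that each structure map $A\to A\otimes_k A$, $A\otimes_k A\to A\otimes_k A\otimes_k A$ used to define the $\delta_i$, $\partial_j$ is a morphism of \ks-algebras, so that the induced maps on $(G\times H)(-)=G(-)\times H(-)$ respect the decomposition. Everything else — the componentwise nature of multiplication and inversion, hence of the cocycle condition and the twisting action — is then formal.
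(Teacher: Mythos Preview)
Your proof is correct and follows the same approach as the paper: both use the identification $(G\times H)(R)=G(R)\times H(R)$ and the componentwise nature of the $\delta_i$, $\partial_j$ to reduce everything to the two factors. The paper's version is much terser (it simply notes that the projections restrict to cocycles and induce the desired map, then asserts bijectivity), whereas you have spelled out the details the paper leaves to the reader.
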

\begin{proof}
 The projections from $G\times H$ to $G$ and $H$ restrict to maps from $\operatorname{Z}_\s^1(A/k, G\times H)$ to $\operatorname{Z}_\s^1(A/k, G)$ and $\operatorname{Z}_\s^1(A/k, H)$ which induce maps from $\operatorname{H}_\s^1(A/k, G\times H)$ to $\operatorname{H}_\s^1(A/k, G)$ and $\operatorname{H}_\s^1(A/k, H)$. The resulting map  $\h^1(A/k,G\times H) \to \h^1(A/k,G) \times \h^1(A/k, H)$ is bijective. 
\end{proof}

\begin{ex} \label{ex:  Gan and Gmn trivial}
Combining Corollary \ref{cor: alg Gr triviale H1} with Proposition \ref{direktes Produkt}, we obtain: If $k$ is a $\s$-field then $\h^1(A/k,\Ga^n)$ and $\h^1(A/k,\Gm^n)$ are trivial for every $k$-$\s$-algebra $A$.
\end{ex}

 \section{Torsors and cocycles} \label{sec: torsors and cocycles}
In this section, we prove our main result which asserts that torsors of $\s$-algebraic groups can be classified by the corresponding cohomology sets. Throughout this section, let $k$ be a $\s$-ring, $G$ a $\s$-algebraic group over $k$ and $A$ a faithfully flat \ks-algebra.

Let $\Sigma_{A/k}(G)$ be the set of all $k$-isomorphism classes of $G$-torsors that are trivial over $A$. We consider $\Sigma_{A/k}(G)$ as a pointed set, the distinguished element being the class of the trivial $G$-torsors.

\begin{thm} \label{Klassifikation} 
 There is an isomorphism of pointed sets between between $\Sigma_{A/k}(G)$ and $\h^1(A/k, G)$. 
\end{thm}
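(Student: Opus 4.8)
The plan is to construct mutually inverse maps between $\h^1(A/k,G)$ and $\Sigma_{A/k}(G)$ using faithfully flat descent for $\s$-varieties (Corollary \ref{cor: Abstieg von Varietaeten} and Remark \ref{rem: descent for torsors}) as the engine.

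\textbf{From cocycles to torsors.} Given a cocycle $\chi\in\z^1(A/k,G)$, I would equip the trivial $G_A$-torsor $G_A$ with a descent datum. Since $G_A\te A$ and $A\te G_A$ are both, after the identifications coming from base change, $G_{A\otimes_k A}$, an isomorphism of $G_{A\otimes_k A}$-torsors $A\te G_A\to G_A\te A$ is given by left translation by an element of $G(A\te A)$; I take translation by $\chi$, i.e. on points $g\mapsto \chi\cdot g$ (reading the two copies of $A\te A$-algebra structure appropriately). The cocycle identity $\del_2(\chi)=\del_1(\chi)\del_3(\chi)$ is exactly the condition $\phi_{13}=\phi_{12}\circ\phi_{23}$ for this to be a descent datum, and because translation is $G$-equivariant the datum is compatible with the action $G_A\times G_A\to G_A$ (here $G$ itself carries the canonical descent datum of $G_0$). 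By Remark \ref{rem: descent for torsors} the torsor $G_A$ with this datum descends to a $G$-torsor $X_\chi$ over $k$, which is trivial over $A$ by construction. If $\chi'=\delta_1(\alpha)\chi\delta_2(\alpha)^{-1}$ for $\alpha\in G(A)$, then right translation by $\alpha$ on $G_A$ is an isomorphism of torsors-with-descent-data between the two data, so $X_\chi\cong X_{\chi'}$; this makes $\chi\mapsto X_\chi$ well defined on $\h^1(A/k,G)$, and clearly $1\mapsto$ trivial torsor.

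\textbf{From torsors to cocycles.} Given a $G$-torsor $X$ trivial over $A$, Remark \ref{trivialer Torsor} gives a point $x\in X(A)$, hence an isomorphism of $G_A$-torsors $\gamma\colon G_A\to X_A$, $g\mapsto g.x$. The torsor $X$, being descended from $k$, carries its canonical $A/k$-descent datum on $X_A$; transporting it through $\gamma$ yields a descent datum on $G_A$ compatible with the $G$-action, which by the previous paragraph must be left translation by some $\chi_x\in\z^1(A/k,G)$ — concretely, $\chi_x$ is the unique element with $\psi_2(x)=\chi_x\cdot\psi_1(x)$ in $X(A\te A)$, where $\psi_1,\psi_2$ are induced by the two inclusions $A\to A\te A$ (uniqueness by the torsor axiom). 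A second point $x'\in X(A)$ differs from $x$ by a unique $\alpha\in G(A)$ with $x'=\alpha.x$, and a direct computation shows $\chi_{x'}=\delta_1(\alpha)\chi_x\delta_2(\alpha)^{-1}$, so the class of $\chi_x$ in $\h^1(A/k,G)$ depends only on $X$; isomorphic torsors visibly give the same class, and the trivial torsor gives the trivial cocycle.

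\textbf{The two maps are inverse, and the main obstacle.} That $X_{\chi_x}\cong X$ and $\chi_{X_\chi}\sim\chi$ both follow from the full faithfulness part of the descent equivalence once one checks the descent data match up on the nose, which is a bookkeeping exercise with the identifications $A\te G_A=G_{A\otimes_k A}=G_A\te A$. I expect the main obstacle to be precisely this bookkeeping: carefully setting up the three canonical $A\otimes_k A\otimes_k A$-algebra structures and verifying that the group-theoretic cocycle identity translates correctly into the geometric cocycle condition $\phi_{13}=\phi_{12}\circ\phi_{23}$ (the indices and the left-versus-right translation conventions are easy to get backwards), and dually that the descent datum attached to a torsor with a chosen point is indeed left translation by $\chi_x$ rather than by its inverse or its $\delta$-twist. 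Once the conventions are pinned down, everything reduces to the sheaf/descent machinery already established and the defining property of torsors (unique transitivity of the $G$-action).
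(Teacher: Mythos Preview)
Your approach is essentially identical to the paper's: the paper also builds $\Upsilon\colon\h^1(A/k,G)\to\Sigma_{A/k}(G)$ by putting a descent datum on the trivial torsor $G_A$ and invoking Remark~\ref{rem: descent for torsors}, builds $\Xi\colon\Sigma_{A/k}(G)\to\h^1(A/k,G)$ by choosing $x\in X(A)$ and reading off the unique $\chi$ relating its two images in $X(A\otimes_k A)$, and then verifies $\Upsilon\circ\Xi=\id$ and $\Xi\circ\Upsilon=\id$ directly.

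One concrete correction, which is exactly the bookkeeping pitfall you anticipated: since $G$ acts on itself from the \emph{left}, an automorphism of the $G_{A\otimes_k A}$-torsor $G_{A\otimes_k A}$ is \emph{right} translation, not left. Indeed $g\mapsto\chi g$ fails $G$-equivariance unless $\chi$ is central, whereas $g\mapsto g\chi$ satisfies $(hg)\chi=h(g\chi)$. The paper accordingly takes the descent datum $\phi$ to be $g\mapsto g\cdot\chi$; with this convention the cocycle identity $\del_2(\chi)=\del_1(\chi)\del_3(\chi)$ matches $\phi_{13}=\phi_{12}\circ\phi_{23}$, and the intertwiner for equivalent cocycles is right translation by $\alpha$, exactly as you wrote. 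With that single sign flipped, your outline goes through verbatim and coincides with the paper's proof.
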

\begin{proof}
\textit{First step: Construct a map $\Xi \colon \Sigma_{A/k}(G) \to \h^1(A/k,G)$. }

Let $X$ be a $G$-torsor that is trivial over $A$. Then $X$ defines a 1-cocycle as follows. Fix an element $x \in X(A)$ (see Remark \ref{trivialer Torsor}). Consider the two natural maps $f_1,f_2 \colon X(A) \to X(A\te A)$. More precisely, $f_1$ is induced by $A \to A\te A, \  a \mapsto 1 \otimes a$ and $f_2$ is induced by $A \to A\te A, \ a \mapsto a \otimes 1$. Let $\chi \in G(A\te A)$ be the unique element with $f_1(x)=\chi.f_2(x)$. We claim that $\del_2(\chi)=\del_1(\chi)\del_3(\chi)$. For the sake of convenience, we denote the three maps $X(A \te A) \to X(A\te A \te A)$ also by $\del_1,\del_2,\del_3$ and similarly the three maps $(G\times X)(A \te A) \to (G\times X)(A\te A \te A)$. Note that $\del_3\circ f_2=\del_2\circ f_2$, $\del_2\circ f_1=\del_1\circ f_1$, and $\del_3\circ f_1=\del_1\circ f_2$ on $X(A)$. We compute
\begin{eqnarray*}
 \del_2(\chi).\del_2(f_2(x))&=& \del_2(\chi.f_2(x))=\del_2(f_1(x))=\del_1(f_1(x))=\del_1(\chi.f_2(x)) \\
&=& \del_1(\chi).\del_1(f_2(x))=\del_1(\chi).\del_3(f_1(x))=\del_1(\chi).\del_3(\chi.f_2(x))\\
& =&\del_1(\chi).(\del_3(\chi).\del_3(f_2(x))) = (\del_1(\chi)\del_3(\chi)).\del_3(f_2(x)) \\
&=&(\del_1(\chi)\del_3(\chi)).\del_2(f_2(x))
\end{eqnarray*} and the claim follows. 

Now we show that another choice of a rational point $y\in X(A)$ yields a cocycle equivalent to $\chi$. Let $\alpha \in G(A)$ be the unique element with $y=\alpha.x$. Let $\tilde \chi \in G(A\te A)$ be the unique element with $f_1(y)=\tilde \chi.f_2(y)$. Then 
\begin{eqnarray*}
 (\delta_1(\alpha)\chi).f_2(x)&=& \delta_1(\alpha).f_1(x)=f_1(\alpha.x)=f_1(y)=\tilde \chi.f_2(y)=\tilde \chi.f_2(\alpha.x) \\
&=& \tilde \chi.(\delta_2(\alpha).f_2(x))=(\tilde \chi\delta_2(\alpha)).f_2(x)
\end{eqnarray*} and thus $\delta_1(\alpha)\chi=\tilde \chi\delta_2(\alpha)$, so $\chi \sim \tilde \chi$.

Let $Y$ be another $G$-torsor that is trivial over $A$ and such that there is an isomorphism $\gamma \colon X \to Y$ of $G$-torsors over $k$. Let $g_1,g_2 \colon Y(A) \to Y(A\te A)$ be the analogs of $f_1,f_2$. Then $\chi.f_2(x)=f_1(x)$ implies $\chi.\gamma_{A\otimes A}(f_2(x))=\gamma_{A\otimes A}(f_1(x))$, since $\gamma$ is $G$-equivariant. Using $\gamma_{A\otimes A}(f_i(x))=g_i(\gamma_A(x))$, we conclude that the rational point $\gamma_A(x) \in Y(A)$ defines precisely the same cocycle $\chi$ as $x \in X(A)$. 

Hence $X \mapsto \chi$ defines a map $\Xi \colon \Sigma_{A/k}(G) \to \h^1(A/k,G)$. 

\bigskip

\noindent \textit{Second step: Construct a map $\Upsilon \colon  \h^1(A/k,G) \to \Sigma_{A/k}(G)$. }\\
Let $\chi \in G(A\te A)$ be a $1$-cocycle of $G$ with respect to $A/k$. Define $Y=G_A$ and $\phi \colon A \te Y \to Y \te A$ by 
\[G(S) \to G(S), \ g \mapsto  g \cdot \chi \] for every $(A\te A)$-$\s$-algebra $S$. Then $\phi_{23}$ is given by multiplication with $\del_1(\chi)$ from the right, $\phi_{12}$ is given by multiplication with $\del_3(\chi)$ from the right and $\phi_{13}$ is given by multiplication with $\del_2(\chi)$ from the right. Thus the cocycle condition $\del_2(\chi)=\del_1(\chi)\del_3(\chi)$ implies $\phi_{13}=\phi_{12}\circ \phi_{23}$, and so $\phi$ is an $A/k$-descent datum on $Y$. We consider $Y$ as a $G_A$-torsor via $\mu \colon G_A\times Y \to Y$ given by 
$G(R)\times G(R) \to G(R), \ (g,y) \mapsto gy$
for every $A$-$\s$-algebra $R$. Let $\rho \colon A \te G_A \to G_A \te A$ be the canonical descent datum on $G_A$, so that $(G_A,\rho)$ descends to $G$, i.e., $\rho(g)=g$ for every $(A\te A)$-$\s$-algebra $R$ and every $g \in G(R)$. Then the following diagram commutes
\[\xymatrix{A\te (G_A\times Y) \ar@{->}[d]_{A\otimes \mu} \ar@{->}[rr]^{\ \rho \times \phi \ } &&(G_A\times Y)\te A \ar@{->}[d]^{\mu \otimes A}\\
            A \te Y \ar@{->}[rr]_{\phi} &&  Y\te A} \]
and thus $\mu$ is compatible with the descent data. Hence $Y$ descends to a $G$-torsor $X$ (Remark \ref{rem: descent for torsors}). Clearly, $X_A$ is isomorphic to $G_A$ as a $G_A$-torsor and so $X$ is trivial over $A$. 

Let $\tilde \chi \in \operatorname{Z}^1_\s(A/k,G)$ be a cocycle that is equivalent to $\chi$, i.e., there exists an $\alpha \in G(A)$ such that $ \chi = \delta_1(\alpha) \tilde \chi \delta_2(\alpha)^{-1}$. Define $\tilde Y=G_A$ with descent datum $\tilde \phi \colon A \te \tilde Y \to \tilde Y \te A$ given by multiplication with $\tilde \chi$ from the right. Then $\tilde Y$ descends to a $G$-torsor $\tilde X$ over $k$ and we claim that $X$ and $\tilde X$ are isomorphic as $G$-torsors. By Corollary \ref{cor: Abstieg von Varietaeten} it suffices to construct an isomorphism $\gamma \colon Y \to \tilde Y$ of $G_A$-torsors that is compatible with the descent data $\phi, \tilde \phi$. Define $\gamma \colon Y \to \tilde Y$ by 
\[ G_A(R) \to G_A(R), \ g \mapsto g \cdot \alpha \] for every $A$-$\s$-algebra $R$. Then $\gamma$ is an isomorphism of $G_A$-torsors. Note that $A\otimes \gamma \colon A \te Y \to A \te \tilde Y$ is given by multiplication with $\delta_1(\alpha)$ from the right and $\gamma\otimes A \colon Y \te A \to \tilde Y \te A$ is given by multiplication with $\delta_2(\alpha)$ from the right. The condition $ \chi = \delta_1(\alpha) \tilde \chi \delta_2(\alpha)^{-1}$ thus implies that the diagram 
\[\xymatrix{A\te Y \ar@{->}[d]_{A\otimes\gamma} \ar@{->}[rr]^{\ \phi \ } &&Y\te A \ar@{->}[d]^{\gamma \otimes A}\\
            A \te \tilde Y \ar@{->}[rr]_{\tilde \phi} &&  \tilde Y\te A} \] commutes, so $\gamma$ is compatible with the descent data $\phi, \tilde \phi$. 

Hence $\chi \mapsto X$ defines a map $\Upsilon \colon  \h^1(A/k,G) \to \Sigma_{A/k}(G)$. 

\bigskip

\noindent \textit{Third step: Show that $\Upsilon$ is the inverse of $\Xi$. }\\
Let $\tilde{X}$ be a $G$-torsor over $k$ that is trivial over $A$. Fix an arbitrary element $\tilde{x} \in \tilde{X}(A)$. Let $\chi \in G(A\te A)$ be the unique element with $f_1(\tilde{x})=\chi. f_2(\tilde{x})$, where $f_1$, $f_2$ are defined as in the first step. Now define $Y=G_A$ with $A/k$-descent datum $\phi \colon A \te Y \to Y \te A$ given by multiplication with $\chi$ from the right. We consider $Y$ as a $G_A$-torsor via multiplication from the left. By the previous step, $Y$ descends to a $G$-torsor $X$ over $k$. We claim that $X$ and $\tilde{X}$ are isomorphic as $G$-torsors.

 Let $\tilde \phi$ denote the canonical descent datum on $\tilde{X}_A$. By Corollary \ref{cor: Abstieg von Varietaeten} it suffices to construct an isomorphism $\gamma \colon Y \to \tilde{X}_A$ of $G_A$-torsors that is compatible with the descent data $\phi, \tilde \phi$. Define $\gamma \colon Y \to \tilde{X}_A$ by 
$G(R) \to\tilde{X}(R), \ g \mapsto g.\tilde{x}$ 
for every $A$-$\s$-algebra $R$. Clearly, $\gamma$ is an isomorphism of $G_A$-torsors. Note that $A\otimes \gamma \colon A \te Y \to A \te \tilde{X}_A$ is given by 
$ G(S) \to\tilde{X}(S), \ g \mapsto g.f_1(\tilde{x})$
for every $(A\te A)$-$\s$-algebra $S$ and similarly $\gamma \otimes A \colon Y \te A \to \tilde{X}_A \te A$ is given by 
$G(S) \to \tilde{X}(S), \ g \mapsto g.f_2(\tilde{x})$
for every $(A\te A)$-$\s$-algebra $S$. The condition $f_1(\tilde{x})=\chi. f_2(\tilde{x})$ thus implies that the diagram
\[\xymatrix{A\te Y \ar@{->}[d]_{A\otimes\gamma} \ar@{->}[rr]^{\ \phi \ } &&Y\te A \ar@{->}[d]^{\gamma \otimes A}\\
            A \te \tilde{X}_A \ar@{->}[rr]_{\tilde \phi} &&  \tilde{X}_A\te A} \]
commutes. Therefore, $\tilde{X}$ and $X$ are isomorphic as $G$-torsors and thus $\Upsilon\circ \Xi=\Id_{\Sigma_{A/k}(G)}$.

Conversely, let us start with a cocycle $\chi \in \operatorname{Z}^1(A/k,G)$ and set $Y=G_A$ with $A/k$-descent datum $\phi \colon A \te Y \to Y \te A$ given by multiplication with $\chi$ from the right. Consider $Y$ as a $G_A$-torsor via multiplication from the left. The second step implies that $Y$ descends to a $G$-torsor $X$. We claim that $x:=\chi^{-1} \in G(A\te A)=Y(A\te A)$ is contained in $X(A)$ and that $f_1(x)=\chi.f_2(x)$, where $f_1,f_2 \colon X(A) \to X(A\te A)$ are defined as in the first step. Consider the diagram 
\[\xymatrix{ G(A \te A \te A)  \ar@{->}[rr]^{\ \phi_{A\otimes  A \otimes A} \ } && G(A \te A \te A) \\
            & G(A\te A)  \ar@{->}[ul]^{\del_2} \ar@{->}[ur]_{\del_3} & } \] where we consider $A\te A \te A$ as an $(A\te A)$-algebra via $a\otimes b\mapsto 1\otimes a\otimes b$. In particular, $\phi_{A\otimes  A \otimes A}$ is given by multiplication from the right with $\del_1(\chi)$. 
Recall that 
\[X(A)= \{ z \in G(A\te A) \mid \phi_{A\otimes A\otimes A}(\del_2(z))=\del_3(z) \} \] (compare with the proof of Theorem \ref{abstieg}). We compute $$\phi_{A\otimes A\otimes A}(\del_2(x))=\del_2(x)\del_1(\chi)=\del_2(\chi)^{-1}\del_1(\chi)=\del_3(\chi)^{-1}=\del_3(x).$$ Hence $x=\chi^{-1}$ is contained in $X(A)$.

The action of $G$ on $X$ is defined via restriction of the $G_A$-action on $Y$. Thus for every $k$-$\s$-algebra $R$, $G(R)$ acts on $X(R)\subseteq G(R\te A)$ via multiplication inside $G(R\te A)$. In particular, $g.z=\del_3(g)\cdot z$ for every $g \in G(A\te A)$ and every $z \in X(A\te A)\subseteq G(A\te A\te A)$. Thus  $\chi.f_2(x)=\del_3(\chi)\cdot f_2(x)$ with multiplication in $G(A\te A\te A)$. Note that $f_1,f_2 \colon X(A) \to X(A\te A)$ are the restrictions of $\del_1,\del_2 \colon G(A\te A) \to G(A\te A \te A)$ from $G(A\te A)$ to $X(A) \subseteq G(A\te A)$. Thus 
\[f_1(x)=\del_1(x)=\del_1(\chi)^{-1}=\del_3(\chi)\del_2(\chi^{-1})=\del_3(\chi)f_2(x)=\chi.f_2(x)\]
 as claimed. We conclude that $\Xi \circ \Upsilon=\Id$.
 \end{proof}

In the following example we show how Theorem \ref{Klassifikation} can be used to determine all torsors for the $\s$-closed subgroups of $\Gm^n$.
\begin{ex}
	Let $k$ be a $\s$-field and let $G$ be a $\s$-closed subgroup of $\Gm^n$. 
	An element $f\in k\{\Gm^n\}= k\{y_1,y_1^{-1},y_2,y_2^{-1},\ldots,y_n,y_n^{-1}\}$ is called a \emph{multiplicative function} if
	$$f=y^{\alpha_0}\s(y^{\alpha_1})\cdots\s^l(y^{\alpha_l}) \text{ where } \alpha_1,\ldots,\alpha_l\in\Z^n$$
	and $y^\beta=y_1^{\beta_1}y_2^{\beta_2}\ldots y_n^{\beta_n}$ for $\beta=(\beta_1,\ldots,\beta_n)\in\Z^n$.
	In \cite[Lemma A.40]{DHW} it is shown that
	$G$ can be defined by using a set $F$ of multiplicative functions and it follows from \cite[Theorem 2.3.1]{Wibmer:Habil} that we may assume $F$ to be finite. So let us fix a finite set $F=\{f_1,\ldots,f_m\}$ of multiplicative functions such that
	$$G(R)=\{g\in (R^\times)^n|\ f_1(g)=1,\ldots,f_m(g)=1\}$$
	for any \ks-algebra $R$.
	We will show that every $G$-torsor $\tilde{X}$ is isomorphic to a $G$-torsor $X$ of the form
	\begin{equation} \label{eqn: Gmn torsor}
	X(R)=\{x\in (R^\times)^n|\ f_1(x)=a_1,\ldots,f_m(x)=a_m\}
	\end{equation}
	for any \ks-algebra $R$, where $a_1,\ldots,a_m\in k^\times$.
	
	Let $A$ be a \ks-algebra such that $\tilde{X}$ is trivial over $A$ and let $\chi\in G(A\otimes_k A)$ be a cocycle for $G$ corresponding to $\tilde{X}$ (as constructed in the proof of Theorem \ref{Klassifikation}). Since $\chi$ is trivial when considered as a cocylce for $\Gm^n$ (Example \ref{ex:  Gan and Gmn trivial}) there exists a $g\in \Gm^n(A)$ such that $\chi=\delta_1(g)\delta_2(g)^{-1}$.
	Since $\chi\in G(A\otimes_k A)$ we have
	$f_i(\chi)=1$ for $i=1,\ldots,m$. But 
	$$f_i(\chi)=f_i(\delta_1(g)\delta_2(g)^{-1})=f_i(g)\otimes (f_i(g))^{-1}\in A\otimes_k A.$$
	Thus $a_i=f_i(g)\in k$ for $i=1,\ldots,m$.
	
	Now let $X$ be the $G$-torsor as defined in Equation (\ref{eqn: Gmn torsor}) above. (Note that $X$ is not empty as $g\in X(A)$.) We claim that $X$ and $\tilde{X}$ are isomorphic. Indeed, the cocycle corresponding to $X$ (constructed from $g\in X(A)$ as in the proof of Theorem \ref{Klassifikation}) is $\chi$. Thus $X$ and $\tilde{X}$ are isomorphic by Theorem \ref{Klassifikation}.		
\end{ex}

Let $\G$ be an algebraic group over $k$ and let $\X$ be a $\G$-torsor. Then clearly the corresponding \ks-variety $X$ (i.e., $X(R)=\X(R^\sharp)$ for every \ks-algebra $R$) is a $G$-torsor for the corresponding $\s$-algebraic group $G$. The following corollary provides a converse.   

\begin{cor}
	Let $\G$ be an algebraic group over a $\s$-field $k$ and let $G$ be the corresponding $\s$-algebraic group. Then:
\begin{enumerate}
	\item Every $G$-torsor is isomorphic to a $G$-torsor $X$, corresponding to a $\G$-torsor $\X$.   
	\item Two $G$-torsors $X$ and $Y$ as in a) are isomorphic as (difference) $G$-torsors if and only if  $\X$ and $\Y$ are isomorphic as (algebraic) $\G$-torsors. 
\end{enumerate}

\end{cor}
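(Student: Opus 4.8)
The plan is to reduce the corollary to Theorem~\ref{Klassifikation}, Proposition~\ref{algebr Gruppen}, and the classical faithfully flat descent statement for the affine group scheme $\G$: for any faithfully flat $k^\sharp$-algebra $B$ the set $\operatorname{H}^1(B/k^\sharp,\G)$ is in canonical bijection with the set of isomorphism classes of $\G$-torsors over $k^\sharp$ that are trivial over $B$ (see e.g.\ \cite[III, \S4, 6.4--6.7]{DG}; no smoothness assumption on $\G$ is needed here, smoothness being relevant only for the identification with Galois cohomology). Since $k$ is a $\s$-field, $k^\sharp$ is a field, so every non-zero $k^\sharp$-algebra is faithfully flat; in particular, for any faithfully flat $k$-$\s$-algebra $A$ the ring $A^\sharp$ is faithfully flat over $k^\sharp$.

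The crucial point is a compatibility of the two cocycle constructions. Let $\X$ be a $\G$-torsor trivial over $A^\sharp$ and let $X$ be the associated $\s$-algebraic torsor, $X(R)=\X(R^\sharp)$. Then $X$ is trivial over $A$, because $X(A)=\X(A^\sharp)\neq\emptyset$, and I claim that the $1$-cocycle attached to $X$ by the first step of the proof of Theorem~\ref{Klassifikation} equals the $1$-cocycle attached to $\X$ by the classical construction. Indeed, the ring underlying $A\te A$ is $A^\sharp\otimes_{k^\sharp}A^\sharp$, hence $G(A\te A)=\G(A^\sharp\otimes_{k^\sharp}A^\sharp)$, and under these identifications the two maps $f_1,f_2$ from the $A$-points to the $(A\te A)$-points agree, the torsor actions agree, and the equivalence relations on cocycles agree, being governed by $G(A)=\G(A^\sharp)$ acting through $\delta_1,\delta_2$; these are exactly the compatibilities recorded in Proposition~\ref{algebr Gruppen}. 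Consequently the square
\[
\xymatrix{
\{\G\text{-torsors trivial over }A^\sharp\}/{\cong} \ar[r] \ar[d] & \operatorname{H}^1(A^\sharp/k^\sharp,\G) \ar@{=}[d] \\
\Sigma_{A/k}(G) \ar_{\Xi}[r] & \h^1(A/k,G)
}
\]
commutes, the left vertical map being $\X\mapsto\big(R\mapsto\X(R^\sharp)\big)$, the right vertical identification being Proposition~\ref{algebr Gruppen}, the top arrow the classical bijection, and the bottom arrow the bijection $\Xi$ of Theorem~\ref{Klassifikation}. Therefore the left vertical map is a bijection from isomorphism classes of $\G$-torsors trivial over $A^\sharp$ to isomorphism classes of $G$-torsors trivial over $A$.

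Parts (a) and (b) are then immediate. For (a), given a $G$-torsor $X$ I take $A=k\{X\}$, over which $X$ is trivial; surjectivity of the left vertical map yields a $\G$-torsor $\X$ with $X\cong\big(R\mapsto\X(R^\sharp)\big)$. For the ``only if'' half of (b), given $G$-torsors $X=\big(R\mapsto\X(R^\sharp)\big)$ and $Y=\big(R\mapsto\Y(R^\sharp)\big)$ I take $A=k\{X\}\te k\{Y\}$; then $X,Y$ are trivial over $A$ and $\X,\Y$ are trivial over $A^\sharp$, so injectivity of the left vertical map shows that $X\cong Y$ forces $\X\cong\Y$. The ``if'' half of (b) needs no cohomology at all: a $\G$-equivariant isomorphism $\X\to\Y$ of $k^\sharp$-schemes induces, on $R^\sharp$-points for every $k$-$\s$-algebra $R$, a $G(R)$-equivariant bijection $X(R)\to Y(R)$ that is natural in $R$, i.e.\ an isomorphism of $G$-torsors.

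The only step requiring actual verification -- and hence the main obstacle, though a mild one -- is the commutativity of the square, i.e.\ that the $\s$-cocycle of $\big(R\mapsto\X(R^\sharp)\big)$ coincides with the classical cocycle of $\X$. This is a matter of unwinding Definition~\ref{def: torsor}, the first step of the proof of Theorem~\ref{Klassifikation}, and the classical construction of \cite[III, \S4, 6.4]{DG}, and checking that the identifications $G(A\te A)=\G(A^\sharp\otimes_{k^\sharp}A^\sharp)$ and $G(A)=\G(A^\sharp)$ intertwine the maps $\delta_i$, $\del_j$ and the torsor structure morphisms; I do not expect any genuine difficulty. If one prefers to avoid cocycles, the same square can be obtained directly from descent: both $X$ and $\big(R\mapsto\X(R^\sharp)\big)$ arise from $G_A$ equipped with the descent datum ``right translation by the cocycle'' via the equivalence of Corollary~\ref{cor: Abstieg von Varietaeten}, and this equivalence is manifestly compatible with the forgetful functor $(-)^\sharp$.
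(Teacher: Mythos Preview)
Your proof is correct and follows essentially the same approach as the paper: combine Theorem~\ref{Klassifikation} with Proposition~\ref{algebr Gruppen} and the classical bijection between $\operatorname{H}^1(A^\sharp/k^\sharp,\G)$ and $\G$-torsors trivial over $A^\sharp$, noting that every $G$-torsor is trivial over some $k$-$\s$-algebra. The paper's proof is a one-line appeal to these ingredients, and you have simply made explicit the commutative square and the compatibility checks that the paper leaves implicit.
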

\begin{proof}
Since every $G$-torsor is trivial over some \ks-algebra this is clear from Proposition \ref{algebr Gruppen} and Theorem \ref{Klassifikation}.
\end{proof}

The following lemma will be needed in Section \ref{sec: Isomorphismclasses}.

\begin{lem} \label{lem: embed into Gln}
	Let $k$ be a $\s$-field and let $G$ be a $\s$-closed subgroup of $\GL_n$. Then every $G$-torsor is isomorphic (as a $G$-torsor) to a $\s$-closed $\s$-subvariety $X$ of $\GL_n$, where the action of $G$ on $X$ is given by matrix multiplication.
\end{lem}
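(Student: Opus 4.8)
The strategy is to reduce to Theorem \ref{Klassifikation} by producing an explicit cocycle description. Let $\tilde{X}$ be a $G$-torsor. Since $\tilde{X}$ is trivial over some faithfully flat \ks-algebra $A$ (take $A=k\{\tilde{X}\}$), Theorem \ref{Klassifikation} attaches to $\tilde{X}$ a cocycle $\chi \in \z^1(A/k,G) \subseteq G(A\te A) \subseteq \GL_n(A\te A)$. Now compose with the $\s$-closed embedding $G \hookrightarrow \GL_n$: the induced map $\h^1(A/k,G) \to \h^1(A/k,\GL_n)$ sends the class of $\chi$ to the class of $\chi$ viewed in $\GL_n(A\te A)$, and by Corollary \ref{cor: alg Gr triviale H1} this image is trivial. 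Hence there exists $g \in \GL_n(A)$ with $\chi = \delta_1(g)\delta_2(g)^{-1}$ in $\GL_n(A\te A)$.

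Next I would build the candidate torsor. Consider the orbit map: define $X \subseteq \GL_n$ by $X(R) = \{ g^{-1}h \mid h \in \GL_n(R), \ \text{(defining equations of } G\text{ hold)}\}$ — more precisely, $X$ is the image of the $\s$-closed embedding $G_A \hookrightarrow \GL_{n,A}$, $h \mapsto hg^{-1}$, composed with descent back to $k$. Concretely: equip $Y := \GL_{n,A}$ with the $A/k$-descent datum $\phi$ given by right multiplication by $\chi$ on $\GL_{n,A}$ (this is a descent datum by the cocycle condition, exactly as in the second step of the proof of Theorem \ref{Klassifikation}, since $\chi \in \GL_n(A\te A)$), and let $G_A = G \otimes_k A$ act on $Y$ by left multiplication inside $\GL_{n,A}$. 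This action is compatible with the descent data, so by Remark \ref{rem: descent for torsors} (with $\GL_{n,A}$ carrying its canonical descent datum coming from $\GL_n$ over $k$, which is compatible with right multiplication by $\chi = \delta_1(g)\delta_2(g)^{-1}$ via conjugation by $g$) the pair descends: $Y$ descends to a $\s$-closed $\s$-subvariety $X$ of $\GL_n$ stable under left multiplication by $G$, and $X$ is a $G$-torsor trivial over $A$. Here the key point is that the descent datum $\phi = (\text{right mult. by }\chi)$ on $\GL_{n,A}$ descends the full $\GL_{n,A}$ to a $\GL_n$-torsor $\X'$ over $k$ (this is the classical $\GL_n$-torsor attached to $\chi$, which by triviality of $\h^1$ is just $\GL_{n,k}$ itself after the change of point by $g$), and $X$ sits inside it as a $\s$-closed $\s$-subvariety by Lemma \ref{lem: sclosed embedding descends} applied to $G_A \hookrightarrow \GL_{n,A}$.

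Finally I would check that $X$ is isomorphic to $\tilde{X}$ as a $G$-torsor: by construction the cocycle attached to $X$ via the recipe in the proof of Theorem \ref{Klassifikation} (using the point $g^{-1} \in X(A) \subseteq \GL_n(A\te A)$, cf. the third step of that proof) is exactly $\chi$, the same cocycle as for $\tilde{X}$; hence $\Xi(X) = \Xi(\tilde{X})$ in $\h^1(A/k,G)$, and since $\Xi$ is a bijection, $X \cong \tilde{X}$ as $G$-torsors. That $X$ is a \emph{$\s$-closed} $\s$-subvariety of $\GL_n$ rather than merely an abstract $\s$-variety with a map to $\GL_n$ follows from Lemma \ref{lem: sclosed embedding descends}, since $G_A \hookrightarrow \GL_{n,A}$ is a $\s$-closed embedding and this property descends; and the $G$-action on $X$ is matrix multiplication because the $G_A$-action on $Y = \GL_{n,A}$ was left matrix multiplication.

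\textbf{Main obstacle.} The delicate point is the bookkeeping of descent data: one must verify that right multiplication by $\chi$ on $\GL_{n,A}$ is simultaneously compatible with (i) the left $G_A$-action and (ii) the canonical descent datum on $\GL_{n,A}$ coming from $\GL_{n,k}$ — or rather, one must correctly identify which descent datum on $\GL_{n,A}$ to use so that both the embedding $G_A \hookrightarrow \GL_{n,A}$ and the left action descend, and so that the resulting $X$ genuinely sits inside $\GL_{n,k}$ (as opposed to inside some nontrivial $\GL_n$-torsor). The clean way around this is precisely to use $g \in \GL_n(A)$ with $\chi = \delta_1(g)\delta_2(g)^{-1}$: conjugation/translation by $g$ intertwines the descent datum "right mult. by $\chi$" with the canonical one, identifying $\X' \cong \GL_{n,k}$, and carries $G_A g^{-1}$ to the $\s$-closed subvariety of $\GL_{n,k}$ that is our $X$. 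Everything else is a routine diagram chase parallel to the proof of Theorem \ref{Klassifikation}.
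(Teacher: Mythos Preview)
Your proposal is correct and follows essentially the same approach as the paper: take a cocycle $\chi$ for the given $G$-torsor, view it simultaneously as a cocycle for $\GL_n$, use right multiplication by $\chi$ to put compatible descent data on the $\s$-closed embedding $G_A\hookrightarrow(\GL_n)_A$, descend via Lemma~\ref{lem: sclosed embedding descends} to a $G$-equivariant $\s$-closed embedding into a $\GL_n$-torsor, and then use triviality of $\h^1(A/k,\GL_n)$ (your element $g$) to identify that $\GL_n$-torsor with $\GL_n$ itself. The paper's write-up is slightly more economical in that it recognizes the descended $G$-torsor as (isomorphic to) the original $X'$ directly from Theorem~\ref{Klassifikation} rather than re-computing its cocycle, but the content is the same.
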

\begin{proof}
	Let $X'$ be a $G$-torsor and let $A$ be a \ks-algebra such that $X'$ is trivial over $A$. Let $\chi\in G(A\otimes_k A)$ be a cocycle corresponding to $X'$ (as constructed in the proof of Theorem \ref{Klassifikation}). As in the proof, we use $\chi$ to define a descent datum on $G_A$. Since $\chi$ can also be interpreted as a cocylce for $\GL_n$, it also defines a descent datum on $(\GL_n)_A$.
	The $\s$-closed embedding $G_A\to (\GL_n)_A$ is $G_A$-equivariant and compatible with the descent data. By Lemma \ref{lem: sclosed embedding descends} it therefore descends to a $G$-equivariant $\s$-closed embedding $X'\to X''$ where $X''$ is some $\GL_n$-torsor. Since every $\GL_n$-torsor is trivial (Corollary \ref{cor: alg Gr triviale H1} and Theorem \ref{Klassifikation}) $X''$ is isomorphic to $\GL_n$ with left multiplication.
	Composing  $X'\to X''$ with this isomorphism yields the desired embedding of $X'$ into $\GL_n$. 
\end{proof}

\section{ $\h^1(k,G)$ and the cohomology sequence} \label{sec 5}
In this section $k$ is a $\s$-field and $G$ is a $\s$-algebraic group over $k$. Note that because $k$ is a field any \ks-algebra $A$ is faithfully flat and therefore the results of the previous sections apply to all \ks-algebras.


\begin{lem}\label{lem: compatibility}
	Let $A \to B$ be a morphism of \ks-algebras. Then the diagram 
		\[\xymatrix{\Sigma_{A/k}(G) \ar@{^{(}->}[d] \ar@{->}[rr]^\sim && \h^1(A/k,G) \ar@{->}[d]\\
			\Sigma_{B/k}(G) \ar@{->}[rr]^\sim && \h^1(B/k,G)} \] 
		commutes, where the horizontal arrows are the isomorphisms from Theorem \ref{Klassifikation}, the left vertical arrow is the natural inclusion and the right vertical arrow is the canonical map $\h^1(A/k,G) \to \h^1(B/k,G)$. In particular, the latter map is injective and does not depend on the choice of the morphism $A \to B$.
\end{lem}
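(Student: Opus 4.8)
The plan is to trace through the explicit description of the maps in Theorem \ref{Klassifikation} and check the square commutes by hand; this is essentially a bookkeeping argument once the relevant identifications are unwound. Let me first fix the vertical maps. Given a $G$-torsor $X$ over $k$ that is trivial over $A$, it is a fortiori trivial over $B$ (pick $x \in X(A)$ and push it forward along $X(A) \to X(B)$), so $\Sigma_{A/k}(G) \to \Sigma_{B/k}(G)$ is just $X \mapsto X$; this is injective because an isomorphism of $G$-torsors over $k$ is a notion that does not involve $A$ or $B$ at all. The right vertical map is the canonical one $\h^1(A/k,G) \to \h^1(B/k,G)$ induced by the ring map $A \otimes_k A \to B \otimes_k B$; note it is a priori defined using the fixed morphism $A \to B$.

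The main step is commutativity. Start with a $G$-torsor $X$ trivial over $A$ and chase it around the square. Going right then down: by the first step of the proof of Theorem \ref{Klassifikation}, choosing $x \in X(A)$ produces the cocycle $\chi \in G(A \te A)$ characterized by $f_1(x) = \chi.f_2(x)$, where $f_1, f_2 \colon X(A) \to X(A \te A)$ come from $a \mapsto 1 \otimes a$ and $a \mapsto a \otimes 1$; then the right vertical map sends $[\chi]$ to the class of the image of $\chi$ under $G(A \te A) \to G(B \te B)$. Going down then right: the torsor $X$ (now regarded as trivial over $B$) together with the point $x_B \in X(B)$ obtained by pushing $x$ forward produces the cocycle $\chi' \in G(B \te B)$ characterized by $g_1(x_B) = \chi'.g_2(x_B)$, where $g_1, g_2 \colon X(B) \to X(B \te B)$ are the analogues of $f_1, f_2$. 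The key observation is that the ring maps are compatible: the square formed by $A \to B$, $A \te A \to B \te B$ and the two pairs of coface maps commutes, so $g_i(x_B)$ is exactly the image of $f_i(x)$ under $X(A \te A) \to X(B \te B)$, and $G$-equivariance of this map forces the image of $\chi$ to be a (hence the) solution of $g_1(x_B) = (\text{image of }\chi).g_2(x_B)$. By uniqueness, $\chi'$ equals the image of $\chi$, so the two routes agree. (This is formally the same computation that appears in the first step of the proof of Theorem \ref{Klassifikation} where it is shown that an isomorphism of torsors preserves the associated cocycle; here the ``isomorphism'' is replaced by the base-change map $X(A \te A) \to X(B \te B)$, which is $G$-equivariant for the same reason.)

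I do not expect a genuine obstacle here: the only subtlety worth flagging is that $\Xi$ and $\Upsilon$ from Theorem \ref{Klassifikation} were only asserted to be mutually inverse, not given by a closed formula, so one must phrase the chase entirely in terms of the cocycle-from-a-rational-point construction $\Xi$ (which \emph{is} explicit), and then invoke Theorem \ref{Klassifikation} to know the horizontal arrows are bijections. Once commutativity of the square is established, the final assertions are immediate: the top arrow is a bijection and the left arrow is injective, so the right arrow $\h^1(A/k,G) \to \h^1(B/k,G)$ is injective; and since the left arrow $\Sigma_{A/k}(G) \hookrightarrow \Sigma_{B/k}(G)$ visibly does not depend on $A \to B$, neither does the right arrow.
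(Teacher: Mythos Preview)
Your proposal is correct and follows essentially the same approach as the paper: pick a rational point $x\in X(A)$, form the cocycle $\chi$ via $f_1(x)=\chi.f_2(x)$, push everything forward along $A\to B$, and use the commutativity of the coface squares together with $G$-equivariance to see that the pushed-forward cocycle coincides with the cocycle built from $x_B$. The paper's argument is the same diagram chase with slightly different notation, and it draws the injectivity and independence conclusions from the commuting square in exactly the way you describe.
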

\begin{proof}
	Let $X$ be a $G$-torsor that is trivial over $A$, so there exists an $x \in X(A)$.
	Let us fix the following notation: we consider the canonical maps $s \colon X(A)\to X(B), \ r \colon X(A\te A) \to X(B\te B)$ and $u \colon G(A\te A)\to G(B\te B)$ induced from $A \to B$ and the canonical maps $f_1,f_2 \colon X(A)\to X(A\te A), g_1,g_2 \colon X(B) \to X(B\te B)$. 
	
	 Then $s(x) \in X(B)$, hence $X$ is trivial over $B$. Thus there is indeed an inclusion $\Sigma_{A/k}(G) \subseteq \Sigma_{B/k}(G)$. Let $\chi \in G(A\te A)$ be the unique element with $f_1(x)=\chi. f_2(x)$. By the proof of Theorem \ref{Klassifikation}, $\chi$ is a cocycle and its equivalence class is the image of the isomorphism class of $X$ under the upper horizontal arrow. The image of the equivalence class of $\chi$ under the right vertical arrow is represented by $u(\chi)$. It remains to show that the equivalence class represented by $u(\chi)$ corresponds to the isomorphism class of $X$ under the lower horizontal arrow. Note that $r(f_1(x))=r(\chi.f_2(x))=u(\chi).r(f_2(x))$. As $r\circ f_1=g_1\circ s$ and $r \circ f_2=g_2\circ s$, we conclude $g_1(s(x))=u(\chi).g_2(s(x))$ and the claim follows.
\end{proof}

Let $I$ denote the set of isomorphism classes of \ks-algebras. We define a preorder on $I$ as follows: If $A$ and $B$ are \ks-algebras representing equivalence classes $[A]$ and $[B]$ then $[A]\leq[B]$ if there exists a morphism of \ks-algebras $A\to B$. Since $[A],[B]\leq [A\otimes_k B]$ we see that $I$ is a directed set. 
If $[A]\leq [B]$, according to Lemma \ref{lem: compatibility}, we have a canonical (injective) morphism of pointed sets $d_{A,B}\colon \h^1(A/k,G)\to  \h^1(B/k,G)$.
If $[A]\leq [B]$ and $[B]\leq [C]$ it is clear from Lemma \ref{lem: compatibility} that $d_{A,C}=d_{B,C}\circ d_{A, B}$. We thus have a directed system of pointed sets and we can define $\h^1(k, G)$ as the direct limit
$$\h^1(k, G)=\varinjlim \h^1(A/k,G).$$
In general, $\h^1(k, G)$ is just a pointed set, but if $G$ is abelian, 
the maps $d_{A,B}$ are morphisms of abelian groups and so $\h^1(k, G)$ is also an abelian group. Since every $G$-torsor $X$ becomes trivial over some \ks-algebra $A$ (e.g., $A=k\{X\}$) we obtain from Theorem \ref{Klassifikation}:
\begin{thm} \label{thm: H1 bij torsors}
The cohomology set $\h^1(k, G)$ is in bijection with the isomorphism classes of $G$-torsors.	 \qed
\end{thm}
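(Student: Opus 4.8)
The plan is to assemble \Cref{thm: H1 bij torsors} from \Cref{Klassifikation} and the limit construction that immediately precedes it. First I would recall the two facts that do the work: (i) by \Cref{Klassifikation}, for each \ks-algebra $A$ the pointed set $\h^1(A/k,G)$ is in bijection with $\Sigma_{A/k}(G)$, the set of $k$-isomorphism classes of $G$-torsors that are trivial over $A$; and (ii) by \Cref{lem: compatibility}, for a morphism $A\to B$ the square relating these bijections commutes, the left vertical map being the \emph{inclusion} $\Sigma_{A/k}(G)\hookrightarrow\Sigma_{B/k}(G)$. Passing to the direct limit over the directed set $I$ of isomorphism classes of \ks-algebras, the right-hand column gives $\h^1(k,G)=\varinjlim\h^1(A/k,G)$ by definition, while the left-hand column gives $\varinjlim\Sigma_{A/k}(G)$, which — since the transition maps are inclusions — is simply the union $\bigcup_{[A]\in I}\Sigma_{A/k}(G)$ inside the class of all isomorphism classes of $G$-torsors.

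Next I would identify that union with the set of all isomorphism classes of $G$-torsors. One inclusion is trivial: every $\Sigma_{A/k}(G)$ consists of isomorphism classes of $G$-torsors. For the reverse inclusion, let $X$ be any $G$-torsor; then $X$ becomes trivial over $A=k\{X\}$, since $X(k\{X\})$ contains the point corresponding to $\id_{k\{X\}}$ under the identification $X(k\{X\})\cong\Hom(k\{X\},k\{X\})$, so by \Cref{trivialer Torsor} the class of $X$ lies in $\Sigma_{k\{X\}/k}(G)$. Hence the union is exactly the set of isomorphism classes of $G$-torsors. Combining this with the limit of the bijections from \Cref{Klassifikation}, which is itself a bijection of pointed sets because a directed colimit of bijections compatible with the transition maps is a bijection, yields the desired bijection between $\h^1(k,G)$ and the isomorphism classes of $G$-torsors.

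There is essentially no obstacle here; the statement is a formal consequence of \Cref{Klassifikation} and the colimit formalism, which is exactly why the excerpt presents it with a one-line \qed. The only points deserving a word of care are: checking that the colimit of the compatible family of bijections $\h^1(A/k,G)\xrightarrow{\sim}\Sigma_{A/k}(G)$ is again a bijection (immediate, since colimits in $\Sets$ are exact and each square in \Cref{lem: compatibility} commutes), and noting that the colimit of the $\Sigma_{A/k}(G)$ along inclusions indexed by the directed set $I$ is the union rather than something subtler (also immediate). If one wishes, one can add the remark that when $G$ is abelian all the maps in sight are group homomorphisms, so the bijection is an isomorphism of abelian groups; but for the pointed-set statement as phrased, the two paragraphs above suffice.
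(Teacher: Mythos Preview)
Your proposal is correct and follows exactly the route the paper indicates: the theorem is stated with a one-line \qed\ because it is an immediate consequence of Theorem~\ref{Klassifikation}, Lemma~\ref{lem: compatibility}, and the observation (made just before the theorem) that every $G$-torsor $X$ trivializes over $A=k\{X\}$. You have simply spelled out the colimit argument that the paper leaves implicit.
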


If $\phi\colon G\to H$ is a morphism of $\s$-algebraic groups and $A\to B$ a morphism of \ks-algebras, then

$$
\xymatrix{
\h^1(A/k,G) \ar[r] \ar[d] & \h^1(A/k,H) \ar[d] \\
\h^1(B/k,G) \ar[r] & \h^1(B/k,H)
}
$$
commutes and we obtain a map of pointed sets $\h^1(k,G)\to\h^1(k,H)$.

\begin{ex}\label{ex3}
	 As in Examples \ref{ex1} and \ref{ex2}, let $G$ be the $\s$-algebraic subgroup of $\Gm$ given by
	 $$G(R)=\{g\in R^\times|\ g^2
	=1,\ \s(g)=g\}$$ for any \ks-algebra $R$.
	Let $M$ be the set of all pairs $(a,b)\in k^\times \times k^\times$ such that $\s(a)=ab^2$. Define an equivalence relation on $M$ by $(a,b)\sim (a',b')$ if there exists a $\lambda\in k^\times$ such that $a'=\lambda^2a$ and $b'=\frac{\s(\lambda)}{\lambda}b$. Note that $M$ is an abelian group under multiplication and also
	$\mathtt{H}=M/\sim$ is an abelian group. We claim that
	\begin{enumerate}[(1)]
		\item $\h^1(k,G)\cong\mathtt{H}$.
		\item Every $G$-torsor is isomorphic to $X_{a,b}$	
		for some $a,b\in k^\times$ with $\s(a)=ab^2$, where
		$X_{a,b}$ is given by $X_{a,b}(R)=\{x\in R|\ x^2=a,\ \s(x)=bx\}$ for any \ks-algebra $R$.   
		\item Two $G$-torsors $X_{a,b}$ and $X_{a',b'}$ are isomorphic if and only if $(a,b)\sim(a',b')$.  
	\end{enumerate}
Let $A$ be a \ks-algebra.
	We saw in Example \ref{ex2} that
	\[ \h^1(A/k, G)\cong\{\alpha \in A^\times \ | \ \alpha^2 \in k, \ \sigma(\alpha)/\alpha \in k \}/\sim  \] where $\alpha \sim \alpha'$ if there is a $\lambda \in k^\times$ with $(\alpha')^2=\lambda^2 \alpha^2$ and $\sigma(\alpha')/\alpha'=\sigma(\lambda)/\lambda \cdot \sigma(\alpha)/\alpha$. 
	Setting $a=\alpha^2$ and $b=\s(\alpha)/\alpha$ yields a morphism of abelian groups $\h^1(A/k, G)\to\mathtt{H}$. If $[A]\leq[B]$, then
	$$
	\xymatrix{
		\h^1(A/k, G) \ar[rr] \ar[rd] & & \h^1(B/k, G) \ar[ld]  \\
		& \mathtt{H} &		
	}
	$$
	commutes and we obtain a morphism $\h^1(k, G)\to \mathtt{H}$.
	
	Conversely, let $(a,b)\in k^\times \times k^\times$ with $\s(a)=ab^2$. Then we can define a \ks-algebra structure on $A=k[y]/(y^2-a)$ by setting $\s(y)=by$. 
	If $(a,b)\sim (a',b')$ then $A\cong A'$ via $y\mapsto \frac{1}{\lambda}y'$. The image $\alpha$ of $y$ in $A$ satisfies $\alpha^2=a\in k$ and $\s(\alpha)/\alpha=b\in k$. Then $\chi=\delta_1(\alpha)\delta_2(\alpha)^{-1}\in\z^1(A/k,G)$ and since $\lambda\alpha\mapsto \alpha'$ under $A\cong A'$ we see that the image of $\chi$ in $\h^1(k, G)$ only depends on the equivalence class of $(a,b)$. So we have a map $\mathtt{H}\to \h^1(k, G)$ and it is clear that these two maps are inverse to each other, so (1) follows.
		
	To prove (2), let $X$ be a $G$-torsor and fix a $k$-$\s$-algebra $A$ such that $X$ is trivial over $A$. We already saw in Example \ref{ex2}, that a cocylce $\chi \in \z^1(A/k,G)$ corresponding to $X$ is of the form $\chi=\alpha^{-1}\otimes \alpha$ for some
	$\alpha \in A^\times$ with $\alpha^2, \sigma(\alpha)/\alpha \in k^\times$. Hence the torsor $X_{a,b}$ with $a=\alpha^2$ and $b=\sigma(\alpha)/\alpha$ is isomorphic to $X$, since $X$ and $X_{a,b}$ define the same equivalence class in $\h^1(A/k,G)$.	For (3), assume $(a,b)\sim(a',b')$ and let $\lambda\in k^\times$ be such that $a'=\lambda^2a$ and $b'=\frac{\s(\lambda)}{\lambda}b$. Then $x\mapsto\lambda x$ defines an isomorphism of $G$-torsors $X_{a,b}\cong X_{a',b'}$. Conversely, if $(a,b)$ and $(a',b')$ are not equivalent, then $X_{a,b}$ and $X_{a',b'}$ correspond to different elements in $\mathtt{H}\cong \h^1(k, G)$ and therefore are not isomorphic.
\end{ex}


\begin{ex} \label{ex Ga}
	Let $k$ be a $\s$-field and $\LL(y)=\s^n(y)+\lambda_{n-1}\s^{n-1}(y)+\ldots+\lambda_0y$ a linear difference equation over $k$. As in Examples \ref{ex: Ga intro} and \ref{ex Ga H1(A)}, we consider the $\s$-closed subgroup $G$ of $\Ga$ given by
	$$G(R)=\{g\in R \mid \LL(g)=0\}\subseteq \Ga(R)$$
	for any \ks-algebra $R$. Note that if $k$ has characteristic zero, then any $\s$-closed subgroup of $\Ga$ is of this form (\cite[Cor. A.3]{DHW2}).
	
	We define an equivalence relation on $k$ by $a\sim a'$ if there exists an element $b\in k$ such that $a'-a=\LL(b)$. 
	Then $\mathtt{H}=k/\sim =k/\LL(k)$ is an abelian group under addition and we claim that
	\begin{enumerate}[(1)]
		\item $\h^1(k,G)\cong\mathtt{H}.$ In particular, $\h^1(k,G)$ is trivial, if and only if $\LL\colon k\to k$ is surjective.
			\item Every $G$-torsor is isomorphic to $X_a$ for some $a\in k$,  where $X_a$ is defined by
			$X_a(R)=\{x\in R|\ \LL(x)=a\}$ for any \ks-algebra $R$. 
			\item Two $G$-torsors $X_a$ and $X_{a'}$ are isomorphic if and only if $a\sim a'$.
	\end{enumerate}
	
	Let $A$ be a \ks-algebra. We saw in Example \ref{ex Ga H1(A)} that
		$$\h^1(A/k,G)\cong \{\alpha\in A \ | \ \LL(\alpha)\in k\}/\sim,$$
		where $\alpha\sim \alpha'$ if $\LL(\alpha'-\alpha)\in\LL(k)$.	Setting $a=\LL(\alpha)$ defines a morphism of abelian groups $\h^1(A/k,G)\to\mathtt{H}$ and this induces  a morphism of abelian groups $\h^1(k, G)\to \mathtt{H}$.
	 
	 Conversely, for $a\in k$ let $A$ be a \ks-algebra such that there exists an $\alpha\in A$ with $\LL(\alpha)=a$. (For example, we may take $A=k[y_1,\ldots,y_n]$ with $\s(y_1)=y_2,  \s(y_2)=y_3,\ldots,\s(y_n)=-\lambda_{n-1}y_n-\ldots-\lambda_0y_1$ and $\alpha=y_1\in A$.)
Then $\chi=1\otimes\alpha-\alpha\otimes 1$ is a cocylce for $G$. The image of $\chi$ in $\h^1(k, G)$ only depends on the equivalence class of $a$, and the induced mapping $\mathtt{H}\to \h^1(k, G)$ is the inverse to the mapping constructed previously. Hence (1) follows.

For (2), let $X$ be a $G$-torsor and $A$ a \ks-algebra such that $X$ is trivial over $A$. We already saw in Example \ref{ex Ga H1(A)} that a cocycle $\chi\in\operatorname{Z}_\s^1(A/k,G)$ corresponding to $X$ is of the form $\chi=1\otimes\alpha-\alpha\otimes 1$ for some $\alpha\in A$ with $\LL(\alpha)\in k$. Set $a=\LL(\alpha)$. Since $X_a$ and $X$ correspond to the same element in $\h^1(A/k,G)$, we see that $X_a$ and $X$ are isomorphic.
	
%
	
Finally, let $a,a'\in k$ such that there exists an element $b\in k$ with $a'-a=\LL(b)$. Then
	$x\mapsto x+b$ defines an isomorphism $X_{a}\to X_{a'}$ of $G$-torsors. Conversely, if $a$ and $a'$ are not equivalent, then the torsors $X_a$ and $X_{a'}$ correspond to different elements in
	$\mathtt{H}\cong\h^1(k,G)$ and therefore are not isomorphic. Hence (3) follows.
	
%
%
\end{ex}
From Corollary \ref{cor: alg Gr triviale H1} and Example \ref{ex:  Gan and Gmn trivial} we obtain:
\begin{ex} \label{ex: h1 trivial for alg groups}
	The cohomology set $\h^1(k, G)$ is trivial for all $\s$-algebraic groups $G$ corresponding to the algebraic groups $\Gm^n$, $\Ga^n$, $\GL_n$ and $\SL_n$.
\end{ex}

\medskip

To establish the long exact cohomology sequence we need to know a few things about quotients of $\s$-algebraic groups. For a morphism $\phi\colon G\to H$ of $\s$-algebraic groups we set $\ker(\phi)(R)=\ker(\phi_R)$ for any \ks-algebra $R$.


If $N$ is a normal $\s$-closed subgroup of $G$ there exists a morphism of $\s$-algebraic groups $\pi\colon G\to G/N$ satisfying the following universal property: $N\subseteq\ker(\pi)$ and for any morphism of $\s$-algebraic groups $\phi\colon G\to H$ such that $N\subseteq\ker(\phi)$ there exists a unique morphism $G/N\to H$ such that 
$$
\xymatrix{
G \ar^{\pi}[rr] \ar_\phi[rd]& & G/N \ar@{..>}[ld] \\
 & H	&
}
$$  
commutes. Moreover, $\ker(\pi)=N$. (See \cite[Theorem 3.2.3]{Wibmer:Habil}.) The morphism $\pi\colon G\to G/N$ is in a certain sense surjective (see \cite[Theorem 3.3.6]{Wibmer:Habil} for more details), but it is not surjective in the naive sense that $\pi_R\colon G(R)\to (G/N)(R)$ is surjective for all \ks-algebras $R$. However, 
what is true, is that for every \ks-algebra $R$ and every $h\in (G/N)(R)$ there exists an $R$-$\s$-algebra $S$ 
and a $g\in G(S)$ such that $\pi(g)\in (G/N)(S)$ equals the image of $h$ under $(G/N)(R)\to (G/N)(S)$.



Recall that a sequence of pointed sets $M'\to M\to M''$ is exact (at $M$) if the inverse image of the distinguished element of $M''$ equals the image of $M'$ in $M$.

\begin{prop}\label{exact sequence}
	Let $N$ be a normal $\s$-closed subgroup of $G$. Then there exists a morphism of pointed sets $\delta \colon (G/N)(k) \to \h^1(k,N)$ such that the sequence
	$$1 \to N(k) \to G(k) \to (G/N)(k) \rightarrow^{\! \! \! \! \! \! \delta} \ \h^1(k,N) \to \h^1(k,G) \to \h^1(k,G/N)$$
	 of pointed sets is exact.
\end{prop}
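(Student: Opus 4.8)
The strategy is to mimic the classical construction of the long exact cohomology sequence for non-abelian group cohomology, replacing Galois descent by the faithfully flat descent machinery of Section~\ref{sec: torsors and cocycles} and the classification $\h^1(k,-)$ in terms of torsors (Theorem~\ref{thm: H1 bij torsors}). The only genuinely new input compared with the algebraic-group case is the failure of $\pi_R\colon G(R)\to (G/N)(R)$ to be surjective; this is handled throughout by passing to a suitable $\s$-overalgebra $S$ of $R$, as recalled just before the statement.

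\textit{First, the connecting map $\delta$.} Given $h\in (G/N)(k)$, choose a \ks-algebra $S$ faithfully flat over $k$ and a $g\in G(S)$ with $\pi(g)=h$ in $(G/N)(S)$ (such $S$ exists: take an $S$ through which a point of the generic fibre factors, then replace it by a faithfully flat such algebra, e.g. enlarge to make it faithfully flat over $k$ using that $k$ is a field). Consider $\chi := \delta_1(g)^{-1}\delta_2(g)\in G(S\te S)$. Since $\pi(\delta_1(g))=\delta_1(h)=\delta_2(h)=\pi(\delta_2(g))$ (because $h$ comes from $k$), we have $\chi\in N(S\te S)$, and a direct computation (the same cocycle identity as in the proof of Theorem~\ref{Klassifikation}) shows $\chi\in\z^1(S/k,N)$. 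One checks that the class of $\chi$ in $\h^1(S/k,N)$, and hence its image in $\h^1(k,N)=\varinjlim\h^1(A/k,N)$, is independent of the choices of $S$ and $g$: a different lift $g'$ over a common refinement differs from $g$ by an element of $N(S')$, which changes $\chi$ by an $N(S')$-coboundary. Set $\delta(h)$ to be this class; clearly $\delta(1)=1$, so $\delta$ is a morphism of pointed sets.

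\textit{Exactness, term by term.} Exactness at $N(k)$ and $G(k)$ is the trivial statement that $N=\ker(\pi)$ on $k$-points. For exactness at $(G/N)(k)$: if $h=\pi(g_0)$ with $g_0\in G(k)$, take $S=k$, $g=g_0$, so $\chi=1$ and $\delta(h)=1$; conversely if $\delta(h)=1$ then after enlarging $S$ we may write $\delta_1(g)^{-1}\delta_2(g)=\delta_1(n)^{-1}\delta_2(n)$ for some $n\in N(S)$, so $gn^{-1}\in G(S)$ satisfies $\delta_1(gn^{-1})=\delta_2(gn^{-1})$; by the sheaf property of $G$ (every \ks-variety is a sheaf, and $S$ is faithfully flat over $k$) this descends to an element $g_0\in G(k)$ with $\pi(g_0)=h$. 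For exactness at $\h^1(k,N)$: the composite $\h^1(k,N)\to\h^1(k,G)$ sends the class of $\chi=\delta_1(g)^{-1}\delta_2(g)$ to the class of the same element viewed in $G(S\te S)$, which is a $G(S)$-coboundary, hence trivial; conversely, if a class in $\h^1(S/k,N)$ becomes trivial in $\h^1(S/k,G)$ — after enlarging $S$ we may arrange this over a single $S$ using Lemma~\ref{lem: compatibility} — then its representative $\chi\in\z^1(S/k,N)$ equals $\delta_1(g)^{-1}\delta_2(g)$ for some $g\in G(S)$; then $\pi(\delta_1(g))=\pi(\delta_2(g))$ since $\chi\in N$, so $\pi(g)\in (G/N)(S)$ satisfies $\delta_1(\pi(g))=\delta_2(\pi(g))$ and descends, by the sheaf property of the \ks-variety $G/N$, to an $h\in (G/N)(k)$ with $\delta(h)=[\chi]$. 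Finally, exactness at $\h^1(k,G)$: that the composite $\h^1(k,N)\to\h^1(k,G)\to\h^1(k,G/N)$ is trivial is immediate since $N\to G\to G/N$ composes to the trivial morphism; for the converse, suppose $[\xi]\in\h^1(S/k,G)$ maps to the trivial class in $\h^1(k,G/N)$, i.e. (after enlarging $S$) $\pi(\xi)=\delta_1(\overline{\alpha})\delta_2(\overline{\alpha})^{-1}$ for some $\overline{\alpha}\in (G/N)(S)$; lift $\overline{\alpha}$ to $\alpha\in G(S')$ over a further \ks-overalgebra $S'$, replace $\xi$ by the equivalent cocycle $\delta_1(\alpha)^{-1}\xi\delta_2(\alpha)$, which now maps to $1$ under $\pi$, hence lies in $\z^1(S'/k,N)$ and witnesses that $[\xi]$ is in the image of $\h^1(k,N)$.

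\textbf{Main obstacle.} The delicate point is the bookkeeping forced by the non-surjectivity of $\pi$: every lift of a point or of a cocycle must be taken in an a priori larger \ks-algebra, and one must repeatedly replace $S$ by $S\te S'$ or similar and invoke Lemma~\ref{lem: compatibility} (which says the transition maps $\h^1(A/k,N)\to\h^1(B/k,N)$ are injective and canonical) to compare classes living at different levels of the directed system $I$. One also needs $S$ to be faithfully flat over $k$ in order to use the sheaf axiom for $G$ and $G/N$ to descend elements satisfying $\delta_1=\delta_2$; since $k$ is a field this is automatic, so the bookkeeping is the real work rather than any flatness subtlety. Everything else is the standard diagram-chase, transcribed verbatim from the theory of non-abelian Galois cohomology.
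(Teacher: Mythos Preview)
Your proposal is correct and follows essentially the same route as the paper's proof: define $\delta$ by lifting $h\in(G/N)(k)$ to some $g\in G(S)$ and taking the coboundary-in-$G$ as a cocycle in $N$, then do the standard four-step exactness chase, passing to larger \ks-algebras whenever a lift along $\pi$ is needed. The only cosmetic differences are that you take $\chi=\delta_1(g)^{-1}\delta_2(g)$ whereas the paper uses $\delta_1(g)\delta_2(g)^{-1}$ (both are valid cocycles, and the resulting classes agree), and you make explicit the appeal to the sheaf property of $G$ and $G/N$ to descend elements with $\delta_1=\delta_2$ to $k$-points, which the paper leaves implicit.
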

\begin{proof}
	To define $\delta$, let $x \in (G/N)(k)$. Then there exists a $k$-$\s$-algebra $A$ and an element $g \in G(A)$ such that $\pi(g)=x$ (more precisely, $\pi$ maps $g$ to the image of $x$ in $(G/N)(A)$). Define $\chi=\delta_1(g)\delta_2(g)^{-1} \in G(A\te A)$. As $x$ is $k$-rational, $\pi(\chi)=1$. Hence $\chi \in N(A\te A)$ (by \cite[Thm. 7.3]{wib}) and $\chi$ is a cocycle for $N$. We would like to define $\delta(x)\in\h^1(k,G)$ as the image of the equivalence class of $\chi$ but we first need to show that this is independent of the choice of $A$ and $g$. Let $B$ be another $k$-$\s$-algebra such that there exists an element $h \in G(B)$ with $\pi(h)=x$ and define $\mu=\delta_1(h)\delta_2(h)^{-1} \in N(B\te B)$.
	
	Let $C$ be a \ks-algebra with $[C]\geq [A],[B]$ and fix morphisms $A\to C$ and $B\to C$. We will show that $\chi$ and $\mu$ have the same image in $\h^1(C/k,N)$. (So that we have a well-defined map $\delta$.)
	 We consider the canonical maps $w \colon G(A) \to G(C)$, $t \colon G(B) \to G(C)$, $u \colon G(A\te A) \to G(C\te C)$ and $v \colon G(B\te B) \to G(C\te C)$. It suffices to show that $u(\chi)$ and $v(\mu)$ are equivalent. Since $x$ is $k$-rational, $\pi(w(g))=\pi(t(h))$ and so there exists an $n \in N(C)$ such that $w(g)=nt(h)$. Thus $u(\chi)=\delta_1(w(g))\delta_2(w(g))^{-1}=\delta_1(n)\delta_1(t(h))\delta_2(t(h))^{-1}\delta_2(n)^{-1}$ is equivalent to $\delta_1(t(h))\delta_2(t(h))^{-1}=v(\mu)$. 
	
	The exactness at $N(k)$ is trivial and the exactness at $G(k)$ follows from $\ker(\pi)=N$. To prove exactness at $(G/N)(k)$, let $x \in (G/N)(k)$ be such that $\delta(x)$ is trivial. Fix a $k$-$\s$-algebra $A$ such that there is a $g \in G(A)$ with $\pi(g)=x$. As $\delta(x)$ is trivial, there exists an $n \in N(A)$ with $\delta_1(g)\delta_2(g)^{-1}=\delta_1(n)\delta_2(n)^{-1}$. Thus $n^{-1}g$ is contained in $G(k)$ and $\pi(n^{-1}g)=x$. 
	
	To prove exactness at $\h^1(k,N)$, let $A$ be a $k$-$\s$-algebra and let $\chi \in \z^1(A/k,N)$ be a cocycle whose class is trivial in $\z^1(A/k,G)$. Thus there exists a $g \in G(A)$ with $\chi=\delta_1(g)\delta_2(g)^{-1}$. Define $x=\pi(g) \in (G/N)(A)$. As $\chi \in N(A\te A)$, $\pi(\chi)=1$ and so $\delta_1(\pi(g))=\delta_2(\pi(g))$. Thus $x \in (G/N)(k)$ and the claim follows. 
	
	To prove exactness at $\h^1(k,G)$ let $A$ be a $k$-$\s$-algebra and let $\chi \in \z^1(A/k,G)$ be a cocycle whose class maps to the trivial class under $\h^1(k,G)\to\h^1(k,G/N)$.
	So there exists an $x \in (G/N)(A)$ such that $\pi(\chi)=\delta_1(x)\delta_2(x)^{-1}$. Let $B$ be an $A$-$\s$-algebra such that there exists a $g \in G(B)$ with $\pi(g)=x$. Consider the canonical map $u \colon G(A\te A)\to G(B\te B)$. Then $\pi(u(\chi))=\pi(\delta_1(g)\delta_2(g)^{-1})$, so $\delta_1(g)^{-1}u(\chi)\delta_2(g)$ is contained in $N(B\te B)$ and thus defines a cocycle in $\z^1(B/k,N)$. The image of $\delta_1(g)^{-1}u(\chi)\delta_2(g)$ in $\h^1(k, N)$ maps to the image of $\chi$ in $\h^1(k,G)$.
\end{proof}

\begin{ex} \label{ex exakte sequenz}
Let $G$ be a $\s$-algebraic group and let $d\geq 1$. Note that $\s^d\colon k\to k$ is a morphism of $\s$-fields. Thus the base extension ${^{\s^d}}\! G$ of $G$ via $\s^d\colon k\to k$ is a $\s$-algebraic group over $k$. Moreover, we have a morphism of $\s$-algebraic groups $\s^d\colon G\to {^{\s^d}}\! G$ defined as follows: For a \ks-algebra $R$ let ${_{\s^d} R}$ denote the \ks-algebra whose underlying difference ring is $R$ with $k$-algebra structure given by $k\xrightarrow{\s^d} k\to R$. Then ${^{\s^d}}\! G(R)=G({_{\s^d} R})$ and $\s^d\colon R\to {_{\s^d} R}$ is a morphism of \ks-algebras. We thus have a morphism of groups $\s^d\colon G(R)\to {^{\s^d}}\! G(R)$. Let $N=\{g\in G \mid \s^d(g)=1\}$ be the kernel of $\s^d\colon G\to {^{\s^d}}\! G$. The morphism  $\s^d\colon G\to {^{\s^d}}\! G$ is surjective if and only if $G$ is absolutely $\s$-reduced, i.e., $\s\colon k\{G\}\otimes_k K\to k\{G\}\otimes_k K$ is injective for all $\s$-field extensions $K$ of $k$, or equivalently, $k\{G\}$ is a $\s$-separable \ks-algebra. (See \cite[Section 6.3.4]{Wibmer:Habil} and \cite[Section 1.1]{TomasicWibmer:Stronglysetal} for more details.)
Let us assume that $G$ is absolutely $\s$-reduced so that $\s^d\colon G\to {^{\s^d}}\! G$ is the quotient of $G$ mod $N$.

We will show that $\h^1(k,N)$ is trivial if $\h^1(k,G)$ is trivial and $\s\colon k\to k$ is an automorphism.
According to Proposition \ref{exact sequence} we have an exact sequence $$G(k)\xrightarrow{\s^d}{^{\s^d}\! G}(k)\xrightarrow{\de}\h^1(k,N)\to \h^1(k,G).$$ But since $\s\colon k\to k$ is bijective, also $G(k)\xrightarrow{\s^d}{^{\s^d}\! G}(k)$ is bijective and it follows that $\h^1(k,N)$ is trivial.

If $\s\colon k\to k$ is not an automorphism, $\h^1(k,N)$ need not be trivial. For example, if $G=\GL_n$, then for a matrix $a\in\GL_n(k)$ with $a\notin\GL_n(\s(k))$, the $N$-torsor $X$ given by $X(R)=\{x\in\GL_n(R)|\ \s^d(x)=a\}$ for any \ks-algebra $R$ is non-trivial.
\end{ex}

\section{Classification results}\label{C}

In this section we use Theorem \ref{Klassifikation} to determine all torsors for a large class of $\s$-algebraic groups.


Throughout this section $k$ is a $\s$-field.
%
%
Let $\X$ be an affine scheme of finite type over $k^\sharp$. To simplify the notation we will drop the $\sharp$ in the sequel, e.g., for a \ks-algebra $R$ we will write $\X(R)$ instead of $\X(R^\sharp)$.


For $d\geq 1$ let $^{\s^d \! \!}\mathcal{X}$ be the affine scheme of finite type over $k$ obtained from $\mathcal{X}$ by base extension via $\sigma^d \colon k \to k$. Similarly, if $\varphi \colon \X \to \mathcal{Y}$ is a morphism of affine schemes of finite type over $k$, then we let $^{\s^d \! \!}\varphi \colon \Xd \to ^{\s^d \! \!}\mathcal{Y}$ be the morphism obtained from $\varphi$ by base extension via $\s^d \colon k \to k$. More explicitly, $\Xd(S)=\X(_{\s^d \!} S)$ for every $k$-algebra $S$, where $_{\s^d \!} S$ denotes the ring $S$ considered as a $k$-algebra via $k \longrightarrow^{\! \! \! \! \! \! \! \! \! \s^d} \ k \to S$.
Note that $(^{\s^d\! \!}\varphi)_S=\varphi_{\left( _{\s^d \!} S\right)}\colon \X(_{\s^d \!} S) \to \mathcal{Y}(_{\s^d \!} S)$.

If $\G$ is an algebraic group over $k$ (i.e., an affine group scheme of finite type over $k$) and $\X$ is a $\G$-torsor, then $\Gd$ is an algebraic group over $k$ and $\Xd$ is a $\Gd$-torsor via the base extension $\Gd \times \Xd \to \Xd$ of $\G \times \X \to \X$.

For every $k$-$\s$-algebra $R$ the map $\s^d \colon R \to \Rd$ is a morphism of $k$-algebras and therefore induces a map $\s^d \colon \X(R) \to {\Xd(R)}$.

\begin{rem}\label{remsigma}
 If $\theta \colon \X \to \Y$ is a morphism of affine schemes of finite type over $k$, then the following equation holds for all $k$-$\s$-algebras $R$ and all $x \in \X(R)$: $\s^d(\theta(x))={^{\s^d \! \!}\theta}(\s^d(x))$.
\end{rem}

\begin{thm}\label{UG mit psi}
Let $\G$ be an algebraic group over $k$ and for some fixed  $d\geq 1$ let $\psi \colon \G \to \Gd$ be a morphism of algebraic groups. We consider the $\s$-algebraic group $G$ with 
\[ G(R)=\{ g \in \G(R) \ | \ \s^d(g)=\psi(g) \} \] for all $k$-$\s$-algebras $R$. The following holds:
\begin{enumerate}
\item Let $\X$ be a $\G$-torsor and let $\varphi \colon \X \to \Xd$ be a morphism such that $\varphi(g.x)=\psi(g).\varphi(x)$ for all $k$-algebras $S$ and all $g \in \G(S), x \in \X(S)$. Then 
 \[ Y(R)=\{ x \in \X(R) \ | \ \s^d(x)=\varphi(x) \} \] for all $k$-$\s$-algebras $R$ defines a $G$-torsor $Y$.    
 \item Every $G$-torsor $X$ is isomorphic to a $G$-torsor $Y$ as defined in a). 
\item Two such $G$-torsors $Y$ and $\tilde Y$ given by $Y(R)=\{ x \in \X(R) \ | \ \s^d(x)=\varphi(x) \}$ and $\tilde Y(R)= \{ x \in \tilde \X(R) \ | \ \s^d(x)=\tilde \varphi(x) \}$ are isomorphic if and only if there exists an isomorphism of $\G$-torsors $\Theta \colon \X \to \tilde \X$ such that $^{\s^d \! \! }\Theta\circ \varphi=\tilde \varphi \circ \Theta$.
\end{enumerate}

\end{thm}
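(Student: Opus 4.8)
The plan is to deduce all three statements from Theorem \ref{Klassifikation} (the bijection $\Sigma_{A/k}(G)\cong\h^1(A/k,G)$) together with the triviality of $\h^1(A/k,\G)$ for $\G$ with trivial Galois cohomology — except that here $\G$ is arbitrary, so I cannot assume its torsors are trivial. Instead, the right framework is the following: a $G$-torsor becomes trivial over some \ks-algebra $A$, and a cocycle $\chi\in\z^1_\s(A/k,G)\subseteq\G(A\te A)$ is in particular a cocycle for $\G$ in the algebraic sense, and it satisfies the extra condition $\s^d(\chi)=\psi(\chi)$ inside $\Gd(A\te A)$. The key structural observation, to be recorded as a lemma first, is: the data of a $\G$-torsor $\X$ together with a morphism $\varphi\colon\X\to\Xd$ satisfying $\varphi(g.x)=\psi(g).\varphi(x)$ is exactly the data needed to equip a $\G$-cocycle (trivial over $A^\sharp$) with the supplementary $\s^d=\psi$ condition; more precisely, passing through descent, $\varphi$ is the ``twisted'' analogue of the map $\s^d\colon\X\to\Xd$ that the condition $\s^d(x)=\varphi(x)$ cuts out.

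First I would prove (a): given $\X$ and $\varphi$ as stated, I must check $Y$ is a $G$-torsor. The $G$-action on $Y$ is the restriction of the $\G$-action on $\X$; it is well-defined because if $\s^d(x)=\varphi(x)$ and $\s^d(g)=\psi(g)$ then, using Remark \ref{remsigma}, $\s^d(g.x)={^{\s^d\!\!}\mu}(\s^d(g),\s^d(x))=\psi(g).\varphi(x)=\varphi(g.x)$, so $g.x\in Y$. Nonemptiness and the torsor axiom (bijectivity of $(g,x)\mapsto(g.x,x)$) I would check by first choosing $A$ with $\X(A)\neq\emptyset$ in a way compatible with $\s^d$: take any $x_0\in\X(A_0)$ for some $A_0$, and observe that the equation $\s^d(x)=\varphi(x)$ over $A_0$, together with its iterates, is solved in a suitable $A_0$-$\s$-algebra $A$ (build $A$ as a $\s$-polynomial-type construction in which one adjoins a solution), exactly as in Example \ref{ex Ga} where $A=k[y_1,\dots,y_n]$ realizes a solution of $\LL(\alpha)=a$. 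For the torsor axiom: given $x_1,x_2\in Y(R)$ there is a unique $g\in\G(R)$ with $x_2=g.x_1$ (since $\X$ is a $\G$-torsor); applying $\s^d$ and using $\s^d(x_i)=\varphi(x_i)$ plus the equivariance of $\varphi$ gives $\varphi(x_2)=\psi(g).\varphi(x_1)$ and also $\varphi(x_2)=\s^d(g).\varphi(x_1)$ from $x_2=g.x_1$ via Remark \ref{remsigma}; comparing and using that $\Xd$ is a $\Gd$-torsor (so the $\Gd$-action is free) yields $\s^d(g)=\psi(g)$, i.e.\ $g\in G(R)$. This is the computational heart of (a).

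For (b), let $X$ be a $G$-torsor, trivial over some \ks-algebra $A$, and let $\chi\in\z^1_\s(A/k,G)\subseteq\G(A^\sharp\otimes_{k^\sharp}A^\sharp)$ be the associated cocycle from the proof of Theorem \ref{Klassifikation}. Viewing $\chi$ as an algebraic $\G$-cocycle, it defines, by faithfully flat descent for schemes, a $\G$-torsor $\X$ over $k^\sharp$ (with $\X_{A}\cong\G_A$ via a descent datum = right multiplication by $\chi$), exactly mirroring the second step of the proof of Theorem \ref{Klassifikation}. The condition $\s^d(\chi)=\psi(\chi)$ in $\Gd(A^\sharp\otimes A^\sharp)$ says precisely that the two descent data — the one on $\X_A$ given by right multiplication by $\chi$, and the one on $(\Xd)_A$ given by right multiplication by $\psi(\chi)=\s^d(\chi)$ — are intertwined by the map $\s^d\colon\G_A\to(\Gd)_A$; hence $\s^d$ descends to a morphism $\varphi\colon\X\to\Xd$ over $k$, and it is $\psi$-equivariant because $\s^d\colon\G\to\Gd$ is $\psi$-equivariant on the nose over $A$ and equivariance descends (Corollary \ref{cor: Abstieg von Varietaeten} / Remark \ref{rem: descent for torsors}). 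Then the $G$-torsor $Y$ built from $(\X,\varphi)$ as in (a) and the original $X$ have, by construction, the same cocycle $\chi\in\z^1_\s(A/k,G)$: indeed unwinding the descent identifications, $\chi^{-1}\in\G(A\te A)$ lands in $Y(A)$ and satisfies $f_1(\chi^{-1})=\chi.f_2(\chi^{-1})$, exactly as in the third step of the proof of Theorem \ref{Klassifikation}. So $X\cong Y$ by Theorem \ref{Klassifikation}.

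For (c), I would argue that both directions are formal once the dictionary is set up. If $\Theta\colon\X\to\tilde\X$ is an isomorphism of $\G$-torsors with ${^{\s^d\!\!}\Theta}\circ\varphi=\tilde\varphi\circ\Theta$, then for $x\in Y(R)$ one computes, using Remark \ref{remsigma}, $\s^d(\Theta(x))={^{\s^d\!\!}\Theta}(\s^d(x))={^{\s^d\!\!}\Theta}(\varphi(x))=\tilde\varphi(\Theta(x))$, so $\Theta$ restricts to a morphism $Y\to\tilde Y$; it is $G$-equivariant since $\Theta$ is $\G$-equivariant and $G\subseteq\G$; and its inverse restricts similarly, giving $Y\cong\tilde Y$. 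Conversely, if $Y\cong\tilde Y$ as $G$-torsors, pick $A$ over which both are trivial; the common cocycle $\chi$ and the descent construction of (b) recover $\X,\tilde\X$ together with $\varphi,\tilde\varphi$, and the $G$-torsor isomorphism $Y\cong\tilde Y$ over $k$, after base change to $A$, is right multiplication by some $\alpha\in G(A)$ compatible with the descent data by right multiplication by $\chi$; but then $\alpha\in\G(A)$ is equally a $\G$-torsor isomorphism $\G_A\to\G_A$ compatible with the $\chi$-descent data, hence descends to an isomorphism $\Theta\colon\X\to\tilde\X$ of $\G$-torsors (descent for schemes), and the compatibility of $\alpha$ with the map $\s^d$ (which holds because $\alpha\in G(A)$, i.e.\ $\s^d(\alpha)=\psi(\alpha)$) descends to the identity ${^{\s^d\!\!}\Theta}\circ\varphi=\tilde\varphi\circ\Theta$.

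The main obstacle I anticipate is (b): one must choose the \ks-algebra $A$ and organize the descent so that the algebraic $\G$-torsor $\X$ produced from $\chi$ genuinely comes equipped with a $\s^d$-to-$\varphi$ correspondence, i.e.\ that the condition $\s^d(\chi)=\psi(\chi)$ really does descend to the asserted $\psi$-equivariant $\varphi\colon\X\to\Xd$ — this requires carefully tracking how the functors $\X\rightsquigarrow\Xd$ and $R\rightsquigarrow\Rd$ interact with tensor products $A^\sharp\otimes_{k^\sharp}A^\sharp$, which is exactly the content compressed into Remark \ref{remsigma} but must be applied over the three-fold tensor products as well. Everything else is bookkeeping with the already-established descent machinery (Corollary \ref{cor: Abstieg von Varietaeten}, Remark \ref{rem: descent for torsors}, Lemma \ref{lem: sclosed embedding descends}) and the explicit cocycle formulas from the proof of Theorem \ref{Klassifikation}.
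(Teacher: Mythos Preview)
Your arguments for (a) and (c) are essentially those of the paper --- (a) is dismissed there as an easy check, and your computation of the torsor axiom is exactly the right one; for (c) the paper also passes through a common cocycle $\chi$, builds explicit isomorphisms $\vartheta\colon\X'\to\X$ and $\tilde\vartheta\colon\X'\to\tilde\X$ from chosen points $y\in Y(A)$, $\tilde y\in\tilde Y(A)$ giving that same cocycle, sets $\Theta=\tilde\vartheta\circ\vartheta^{-1}$, and verifies the intertwining condition by an explicit calculation over $A$.

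The real gap is in (b), and although you flag it as the delicate step, your proposed mechanism does not work. You want to descend ``$\s^d\colon\G_A\to(\Gd)_A$'' to $\varphi\colon\X\to\Xd$, but $\s^d$ is not a morphism of $A$-schemes: for an $A$-algebra $S$ carrying no $\s$-structure there is no map $\s^d\colon\G(S)\to\Gd(S)$, so faithfully flat descent for schemes cannot be applied to it. There is a second, more structural issue: the $\Gd$-torsor obtained by descending $(\Gd)_A$ along the cocycle $\s^d(\chi)$ is not $\Xd$ on the nose. Concretely, $\Xd(S)=\X({_{\s^d}S})$ lives inside $\G({_{\s^d}S}\te A)$ with the second tensor factor untwisted, whereas the torsor $\Y$ built from $\s^d(\chi)\in\Gd(A\te A)=\G({_{\s^d}A}\te{_{\s^d}A})$ lives inside $\G({_{\s^d}S}\te{_{\s^d}A})$. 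The paper handles both issues by using $\psi$ rather than $\s^d$: since $\psi\colon\G\to\Gd$ is a genuine morphism of $k$-schemes and $\psi(\chi)=\s^d(\chi)$, the map $\psi_{S\te A}$ restricts to a morphism $\rho\colon\X\to\Y$; separately one writes down an explicit isomorphism $\gamma\colon\Xd\to\Y$ induced by ${_{\s^d}S}\te A\to{_{\s^d}S}\te{_{\s^d}A}$, $s\otimes a\mapsto s\otimes\s^d(a)$; and then $\varphi=\gamma^{-1}\circ\rho$. The $\psi$-equivariance of $\varphi$ follows immediately from the fact that $\rho$ is built out of the group homomorphism $\psi$ and that $\gamma$ is $\Gd$-equivariant --- not from any ``$\psi$-equivariance of $\s^d$'', which does not make sense as stated. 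With this $\varphi$ in hand, one checks directly from the descent description that $Y(R)=\{g\in\X(R)\mid\s^d(g)=\varphi(g)\}$. So the fix is: descend $\psi$, not $\s^d$, and supply the isomorphism $\gamma$ explicitly.
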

\begin{proof}

For a), it is easy to check that the action $\G \times \X \to \X$ restricts to $G \times Y \to Y$ and that $Y$ is a $G$-torsor.

To prove b), we fix a $k$-$\s$-algebra $A$ with $X(A) \neq \emptyset$ and let $[\chi] \in \h^1(A/k,G)$ be the equivalence class that corresponds to the isomorphism class of $X$ (use Theorem \ref{Klassifikation}). As $G(R) \subseteq \G(R)$ for all $k$-$\s$-algebras $R$, there is a canonical map $\h^1(A/k,G)\to \operatorname{H}^1(A/k, \G)$ and the image of $[\chi]$ thus corresponds to a $\G$-torsor $\X$ that is also trivial over $A$ (see for example \cite[Chapter III, \S4, 6.5]{DG}). Recall that $\chi \in G(A\te A)$ and that up to an isomorphism of torsors,
\begin{eqnarray*}
\X(S)&=& \{ g \in \G(S\te A) \ | \ \psi_2(g)=\psi_1(g)\cdot\chi \} \\ 
\Xd(S)&=& \X(_{\s^d \!} S)=\{ g \in \G(_{\s^d \!} S\te A) \ | \ \phi_2(g)=\phi_1(g)\cdot\chi \}
\end{eqnarray*} for all $k$-algebras $S$,
where $\psi_1,\psi_2$ denote the two canonical maps $\G(S\te A) \to \G(S\te A \te A)$ and $\phi_1,\phi_2 \colon \G(_{\s^d \!}S\te A) \to \G(_{\s^d \!}S\te A \te A)$. As $\X$ corresponds to the cocycle $\chi$, there exists an $x \in \X(A)$ with $f_1(x)=\chi.f_2(x)$, where $f_1,f_2 \colon \X(A) \to \X(A\te A)$. Let $y=\s^d(x) \in \Xd(A)$ and compute $g_1(y)=\s^d(\chi).g_2(y)$, where $g_1,g_2 \colon \Xd(A) \to \Xd(A\te A)$. As $\Xd$ is a $\Gd$-torsor, we conclude that the isomorphism class of $\Xd$ corresponds to $[\s^d(\chi)] \in \operatorname{H}^1(A/k, \Gd)$. Thus $\Xd$ is isomorphic to the $\Gd$-torsor $\Y$ defined by 
\[\Y(S)=\{g \in \G(_{\s^d \!}S\otimes_k {_{\s^d \!}A}) \mid \gamma_2(g)=\gamma_1(g)\cdot \s^d(\chi) \}, \] where $\gamma_1,\gamma_2 \colon \G(_{\s^d \!}S\otimes_k {_{\s^d \!}A})\to \G(_{\s^d \!}S\otimes_k {_{\s^d \!}A}\otimes_k {_{\s^d \!}A})$. We can also construct an explicit isomorphism $\gamma \colon \Xd \to \Y$: Let $S$ be a $k$-algebra. Then $s\otimes a \to s \otimes {\s^d(a)}$ defines a morphism of $k$-algebras $_{\s^d \!}S\otimes_k A \to \Sd\otimes_k {_{\s^d \!}A}$ which induces a group homomorphism $\gamma \colon \G(\Sd\otimes_k A) \to \G(\Sd\otimes_k {_{\s^d \!}A})$. It is easy to check that $\gamma$ restricts to a map $\gamma_S \colon \Xd(S) \to \Y(S)$ and that these maps $(\gamma_S)_S$ define a morphism of functors $\gamma \colon \Xd \to \Y$. Moreover, $\gamma$ is a morphism of $\Gd$-torsors and it follows that $\gamma$ is in fact an isomorphism.

We continue with the construction of a morphism $\varphi \colon \X \to \Xd$. It is easy to check that $\psi_{S\otimes A} \colon \G(S\te A) \to \Gd(S\te A)=\G(_{\s^d \!}S\otimes_k {_{\s^d \!}A})$ restricts to a map $\rho_{S} \colon \X(S) \to \Y(S)$ for every $k$-algebra $S$ and that this defines a morphism of functors $\rho \colon \X \to \Y$. Let $S$ be a $k$-algebra and let $g \in \G(S)$ and $x \in \X(S) \subseteq \G(S\te A)$. Recall that $g.x \in \X(S)$ is obtained from the multiplication of $g$ with $x$ inside $\G(S\te A)$ (compare with the second step in the proof of Theorem \ref{Klassifikation}). Thus $\rho(g.x)=\psi(g\cdot x)=\psi(g)\cdot \psi(x)=\psi(g).\rho(x)$. Let $\varphi \colon \X \to \Xd$ denote the composition $\varphi=\gamma^{-1}\circ\rho$. Then $\varphi(g.x)=\psi(g).\varphi(x)$ for all $k$-algebras $S$ and all $g \in \G(S), x \in \X(S)$. The isomorphism class of $X$ corresponds to $[\chi] \in \h^1(A/k,G)$ and thus $X$ is isomorphic to the torsor $Y$ with 
$ Y(R)= \{ g \in G(R\te A) \mid \epsilon_2(g)=\epsilon_1(g)\cdot \chi \}$ for all $k$-$\s$-algebras $R$, where $\epsilon_1,\epsilon_2$ denote the canonical maps $G(R\te A)\to G(R\te A \te A)$. We conclude 
\begin{eqnarray*}
  Y(R)&=& \{ g \in G(R\te A) \mid \epsilon_2(g)=\epsilon_1(g)\cdot \chi \} \\
&=& \{ g \in \G(R\te A) \mid \s^d(g)=\psi(g) \text{ and } \psi_2(g)=\psi_1(g)\cdot \chi\} \\
&=& \{ g \in \X(R) \mid \s^d(g)=\varphi(g)\},
\end{eqnarray*} where $\psi_1,\psi_2 \colon \G(R\te A ) \to \G(R\te A \te A)$. The claim follows. 

To prove part c), let us first assume that there exists an isomorphism of $\G$-torsors $\Theta \colon \X \to \tilde \X$ with $^{\s^d \! \! }\Theta\circ \varphi=\tilde \varphi \circ \Theta$. We claim that $\Theta$ restricts to an isomorphism $\theta \colon Y \to \tilde Y$. Let $R$ be a $k$-$\s$-algebra. Then for $x \in Y(R)$, we use Remark \ref{remsigma} to obtain $\sigma^d(\Theta(x))={^{\s^d \! \! }\Theta}(\s^d(x))={^{\s^d \! \! }\Theta}(\varphi(x))=\tilde \varphi(\Theta(x))$ and thus $\Theta(x) \in \tilde Y(R)$. Conversely, let $y \in \tilde Y(R)$ and let $x \in \X(R)$ be the unique element with $\Theta(x)=y$. Then $\s^d(y)=\tilde \varphi(y)$ implies ${^{\s^d \! \! }\Theta}(\s^d(x))={^{\s^d \! \! }\Theta}(\varphi(x))$ and as ${^{\s^d \! \! }\Theta}$ is an isomorphism we conclude $\s^d(x)=\varphi(x)$. Thus $x \in Y(R)$ and the claim follows. 

Conversely, assume that $Y$ and $\tilde Y$ are isomorphic as $G$-torsors. Fix a $k$-$\s$-algebra $A$ such that $Y$ and $\tilde Y$ are trivial over $A$. Let further $\chi\in \z^1(A/k,G)$ be a cocycle such that its equivalence class corresponds to the isomorphism class of $Y$ and $\tilde Y$. Then there exist elements $y \in Y(A)$ and $\tilde y \in \tilde Y(A)$ such that $f_1(y)=\chi.f_2(y)$ and $g_1(\tilde y)=\chi.g_2(\tilde y)$, where $f_1,f_2 \colon Y(A)\to Y(A\te A)$ and $g_1,g_2 \colon \tilde Y(A) \to \tilde Y(A\te A)$. Consider $\chi$ as a cocycle in $\operatorname{Z}^1(A/k,\G)$. Since $Y(A)\subseteq \X(A)$ and $\tilde Y(A) \subseteq \tilde \X(A)$, the isomorphism classes of $\X$ and $\tilde \X$ both correspond to the equivalence class $[\chi] \in \operatorname{H}^1(A/k,\G)$. Thus $\X$ and $\tilde \X$ are isomorphic as $\G$-torsors (see \cite[Chapter III, \S 4, 6.5]{DG}) and moreover, they are both isomorphic to the $\G$-torsor $\X'$ with 
\[\X'(S)= \{g \in \G(S\te A) \mid \psi_2(g)=\psi_1(g)\cdot \chi \} \] for all $k$-algebras $S$, where $\psi_1,\psi_2 \colon \G(S\te A) \to \G(S\te A \te A)$. More explicitly, there are isomorphisms $\vartheta \colon \X' \to \X$ and $\tilde \vartheta \colon \X'\to \tilde \X$ obtained from descending the isomorphisms $\G_A\to \X_A$ and $\G_A\to \tilde \X_A$ that are given by $g \mapsto g.y$ and $g \mapsto g.\tilde y$  (compare with the third step of the proof of Theorem \ref{Klassifikation}). 
We claim that $\Theta=\tilde \vartheta \circ \vartheta^{-1}$ has the desired property $^{\s^d \! \! }\Theta\circ \varphi=\tilde \varphi \circ \Theta$. It suffices to show that this holds after base extension from $k$ to $A$, so we claim that \[^{\s^d \! \! }\tilde \vartheta_A \circ ^{\s^d \! \! }\vartheta^{-1}_A\circ \varphi_A=\tilde \varphi_A \circ \tilde \vartheta_A \circ \vartheta^{-1}_A.\]
We use $(^{\s^d \! \! }\vartheta)_A= {^{\s^d \! \! }(\vartheta_A)}$ to obtain that $^{\s^d \! \! }\vartheta_A\colon \Gd_A \to \Xd_{\! A}$ is given by $g \mapsto g.\s^d(y)$ and similarly, $^{\s^d \! \! }\tilde \vartheta_A\colon \Gd_A \to ^{\s^d \! \! \! \!}\tilde \X_A$ is given by $g \mapsto g.\s^d(\tilde y)$. Let $S$ be an $A$-algebra and $z \in \X(S)$. Then there exists a $g \in \G(S)$ with $z=g.y$. We use $y \in Y(A)$, $\tilde y \in \tilde Y(A)$ to compute
\begin{eqnarray*}
 ^{\s^d \! \! }\tilde \vartheta(^{\s^d \! \! }\vartheta^{-1}(\varphi(g.y)))&=&^{\s^d \! \! } \tilde \vartheta(^{\s^d \! \! }\vartheta^{-1}(\psi(g).\varphi(y)))\\
&=&^{\s^d \! \! }\tilde \vartheta(^{\s^d \! \! }\vartheta^{-1}(\psi(g).\s^d(y)))\\
&=& \psi(g).\s^d(\tilde y)\\
&=&\psi(g).\tilde \varphi(\tilde y) \\
&=&\tilde \varphi(g.\tilde y) \\
&=&\tilde \varphi(\tilde \vartheta(\vartheta^{-1}(g.y))) 
\end{eqnarray*} and the claim follows.
 \end{proof}

\begin{cor} \label{cor:UG psi H1 trivial}
As in Theorem \ref{UG mit psi} we consider the $\s$-algebraic group $G$ given by
\[ G(R)=\{ g \in \G(R) \ | \ \s^d(g)=\psi(g) \}\]
for all \ks-algebras $R$. Let $X$ be a $G$-torsor and assume\footnote{Note that this assumption is satisfied if $k$ is algebraically closed.} that $\operatorname{H}^1(k, \G)=\{0\}$. Then there exists an element $a\in\Gd(k)$ such that $X$ is isomorphic to the $G$-torsor $Y$ given by
$$Y(R)=\{x\in\G(R)\mid \s^d(x)=\psi(x)a\}$$
for all \ks-algebras $R$.
\end{cor}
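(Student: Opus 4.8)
The plan is to deduce the corollary directly from Theorem~\ref{UG mit psi} by exhibiting, under the hypothesis $\operatorname{H}^1(k,\G)=\{0\}$, a particularly simple model for the pair $(\X,\varphi)$ attached to $X$ in part~b). By Theorem~\ref{UG mit psi}(b), $X$ is isomorphic to some $Y$ of the form $Y(R)=\{x\in\X(R)\mid\s^d(x)=\varphi(x)\}$, where $\X$ is a $\G$-torsor and $\varphi\colon\X\to\Xd$ satisfies $\varphi(g.x)=\psi(g).\varphi(x)$. The assumption $\operatorname{H}^1(k,\G)=\{0\}$ forces $\X$ to be the trivial $\G$-torsor, so after choosing a point in $\X(k)$ we may identify $\X=\G$ with $\G$ acting on itself by left multiplication. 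The key step is then to translate the equivariance condition on $\varphi$ into the statement that $\varphi$ is right multiplication by a fixed element $a\in\Gd(k)$.

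First I would record that $\operatorname{H}^1(k,\G)=\operatorname{H}^1(k^\sharp,\G)$ via Proposition~\ref{algebr Gruppen} (applied in the direct-limit form), and that its vanishing together with Theorem~\ref{thm: H1 bij torsors} means every $\G$-torsor over $k$ is trivial; in particular the torsor $\X$ produced by Theorem~\ref{UG mit psi}(b) has a $k$-point. Using such a $k$-point $x_0\in\X(k)$ to trivialise $\X$, I identify $\X\cong\G$ as $\G$-torsors (left translation action). Under this identification $\Xd\cong\Gd$ as $\Gd$-torsors, using the base-change along $\s^d$ and the point $\s^d(x_0)\in\Xd(k)$ (here Remark~\ref{remsigma} guarantees compatibility). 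Now $\varphi\colon\G\to\Gd$ becomes a morphism of $\G$-varieties with $\varphi(g\cdot x)=\psi(g)\cdot\varphi(x)$ for all $k$-algebras $S$, $g\in\G(S)$, $x\in\G(S)$, where $\Gd$ carries the $\G$-action $g.h=\psi(g)h$.

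The heart of the argument is the following elementary observation: setting $a:=\varphi(1)\in\Gd(k)$ and taking $x=1$, $g$ arbitrary gives $\varphi(g)=\varphi(g\cdot 1)=\psi(g)\cdot\varphi(1)=\psi(g)a$ functorially in $S$, i.e.\ $\varphi$ is the morphism $g\mapsto\psi(g)a$. Hence
\[
Y(R)=\{x\in\G(R)\mid\s^d(x)=\varphi(x)\}=\{x\in\G(R)\mid\s^d(x)=\psi(x)a\}
\]
for every \ks-algebra $R$, which is exactly the asserted form. One should double-check that $a$ lies in $\Gd(k)$ and not merely in $\Gd$ of some extension: this is immediate since $1\in\X(k)$ and $\varphi$ is defined over $k$, so $\varphi(1)\in\Xd(k)\cong\Gd(k)$. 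Finally I would remark, as in the footnote, that if $k$ is algebraically closed then $\operatorname{H}^1(k^\sharp,\G)=\{0\}$ for any affine algebraic group $\G$, so the hypothesis is automatic there.

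The main obstacle is bookkeeping rather than mathematics: one must carefully track how the trivialisation $\X\cong\G$ interacts with the $\s^d$-twist, so that the induced identification $\Xd\cong\Gd$ really is the one making $\varphi$ into a $\G$-equivariant (for the $\psi$-twisted action) map $\G\to\Gd$; the evaluation-at-$1$ trick then finishes the proof cleanly. No appeal to part~c) of Theorem~\ref{UG mit psi} is needed.
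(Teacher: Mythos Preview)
Your proposal is correct and follows essentially the same route as the paper: invoke Theorem~\ref{UG mit psi}(b), use the hypothesis $\operatorname{H}^1(k,\G)=\{0\}$ to identify the $\G$-torsor $\X$ with $\G$ itself, set $a=\varphi(1)\in\Gd(k)$, and read off $\varphi(x)=\psi(x)a$ from the equivariance condition. The paper's proof is just a terser version of exactly this argument. One small remark: your appeal to Proposition~\ref{algebr Gruppen} and Theorem~\ref{thm: H1 bij torsors} is an unnecessary detour through the difference setting, since the $\X$ produced by Theorem~\ref{UG mit psi}(b) is an \emph{algebraic} $\G$-torsor, and the hypothesis $\operatorname{H}^1(k,\G)=\{0\}$ directly (classically) says all such torsors are trivial.
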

\begin{proof}
	According to Theorem \ref{UG mit psi} the $G$-torsor $X$ is isomorphic to $Y$ given by
	$$Y(R)=\{x\in\G(R)\mid  \s^d(x)=\varphi(x)\}$$
	for all \ks-algebras $R$, where $\varphi\colon\G\to \Gd$ is a morphism of schemes over $k$ such that $\varphi(g.x)=\psi(g).\varphi(x)$ for all $k$-algebras $S$ and all $g,x \in \G(S)$.
	So if $a=\varphi(1)\in\Gd(k)$ then $\varphi(x)=\psi(x)a$ for all \ks-algebras $R$ and $x\in\G(R)$. 
		\end{proof}
From Corollary \ref{cor:UG psi H1 trivial} we obtain a classification of torsors for (special) unitary groups:

\begin{ex}
	Let $\G=\GL_n$ or $\G=\SL_n$. If
$$G=\{ g \in \G \mid \s^d(g)=(g^{\operatorname{T}})^{-1} \},$$
then for every $G$-torsor $X$, there exists an element $a \in \G(k)$ such that $X$ is isomorphic to the $G$-torsor 
\[Y=\{g \in \G \mid \s^d(g)=(g^{\operatorname{T}})^{-1} a \}. \] 
Similarly, if 
	$$G=\{ g \in \G \mid \s^d(g)=g \},$$
	then for every $G$-torsor $X$, there exists an element $a \in \G(k)$ such that $X$ is isomorphic to the $G$-torsor 
	\[Y=\{g \in \G \mid \s^d(g)=g a \}. \] 
\end{ex}

\begin{cor}\label{cor}
Let $\G$ be a linear algebraic group over $k$ and consider the $\s$-algebraic group $G$ with 
\[ G(R)=\{g \in \G(R) \mid \s^d(g)=1 \} \] for all $k$-$\s$-algebras $R$. Then for every $G$-torsor $X$, there exists a $\G$-torsor $\X$ and an element $a \in \Xd(k)$, such that $X$ is isomorphic to the $G$-torsor $Y$ with 
\[Y(R)=\{ x \in \X(R) \mid \s^d(x)=a \} \] for every $k$-$\s$-algebra $R$. 
\end{cor}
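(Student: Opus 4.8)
The plan is to obtain Corollary \ref{cor} as the special case of Theorem \ref{UG mit psi} in which $\psi$ is the trivial homomorphism, and then to upgrade the twisting morphism $\varphi$ produced by that theorem into an honest $k$-rational point of $\Xd$. So first I would apply Theorem \ref{UG mit psi} with the given $d$ and with $\psi\colon\G\to\Gd$ the trivial morphism sending everything to $1\in\Gd$; this is a morphism of algebraic groups, and the associated $\s$-algebraic group $G(R)=\{g\in\G(R)\mid\s^d(g)=\psi(g)\}$ is exactly the group $\{g\in\G(R)\mid\s^d(g)=1\}$ of the statement. By part (b) of that theorem, an arbitrary $G$-torsor $X$ is isomorphic to a $G$-torsor $Y$ with $Y(R)=\{x\in\X(R)\mid\s^d(x)=\varphi(x)\}$ for some $\G$-torsor $\X$ and some morphism $\varphi\colon\X\to\Xd$ of schemes over $k$ satisfying $\varphi(g.x)=\psi(g).\varphi(x)=\varphi(x)$ for all $k$-algebras $S$ and all $g\in\G(S)$, $x\in\X(S)$ (using that $1\in\Gd(S)$ acts as the identity on $\Xd(S)$).

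The main step is to show that $\varphi$ is the constant morphism with value some $a\in\Xd(k)$. Here I would use that $\X$ is a $\G$-torsor, so that $\G\times_k\X\to\X\times_k\X$, $(g,x)\mapsto(g.x,x)$, is an isomorphism. The invariance $\varphi(g.x)=\varphi(x)$ is the functorial form of the identity $\varphi\circ\mu=\varphi\circ q$ of morphisms $\G\times_k\X\to\Xd$, where $\mu$ is the action and $q$ the second projection; composing with the inverse of the above isomorphism turns this into $\varphi\circ p_1=\varphi\circ p_2$ as morphisms $\X\times_k\X\to\Xd$, where $p_1,p_2\colon\X\times_k\X\to\X$ are the two projections. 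Passing to coordinate rings, this reads $\varphi^*(f)\otimes 1=1\otimes\varphi^*(f)$ in $k[\X]\otimes_k k[\X]$ for every $f\in k[\Xd]$, where $\varphi^*\colon k[\Xd]\to k[\X]$. Since $k$ is a field and $\X$ is non-empty (being a torsor), $k\to k[\X]$ is faithfully flat, so the sequence $0\to k\to k[\X]\to k[\X]\otimes_k k[\X]$ appearing in Example \ref{ex: H1 von Ga trivial} is exact; hence $\varphi^*(f)\in k$ for all $f$. Therefore $\varphi^*$ factors as $k[\Xd]\to k\hookrightarrow k[\X]$, which says exactly that $\varphi$ is the constant morphism attached to the point $a\in\Xd(k)$ corresponding to $k[\Xd]\to k$.

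With $\varphi$ constant of value $a$, for every $k$-$\s$-algebra $R$ and every $x\in\X(R)$ the element $\varphi(x)$ equals the image of $a$ in $\Xd(R)$, and hence $Y(R)=\{x\in\X(R)\mid\s^d(x)=a\}$, which is the asserted description. I do not anticipate any serious obstacle here: essentially all the content is carried by Theorem \ref{UG mit psi}, and the one new ingredient — that a $\G$-invariant morphism out of a $\G$-torsor is constant — is just the elementary faithfully flat descent argument sketched above.
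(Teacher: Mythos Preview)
Your proposal is correct and follows essentially the same route as the paper: apply Theorem~\ref{UG mit psi} with $\psi$ trivial, then argue that the resulting $\G$-invariant morphism $\varphi\colon\X\to\Xd$ is constant with value in $\Xd(k)$. The only cosmetic difference is that the paper phrases the constancy step functorially---observing that $\varphi_S$ is constant with value $a_S$ for each $S$ with $\X(S)\neq\emptyset$, and then invoking the sheaf property of $\Xd$ to conclude $a_S\in\Xd(k)$---whereas you pass through the torsor isomorphism $\G\times_k\X\cong\X\times_k\X$ to get $\varphi\circ p_1=\varphi\circ p_2$ and then read off $\varphi^*(k[\Xd])\subseteq k$ from the exactness of the Amitsur complex; these are two formulations of the same faithfully flat descent argument.
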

\begin{proof}
If we let $\psi \colon \G \to \Gd$ be the morphism given by $g \mapsto 1$, then $G$ is defined as in Theorem \ref{UG mit psi}. Let $X$ be a $G$-torsor. By part b) of this theorem, there exists a $\G$-torsor $\X$ and a morphism $\varphi \colon \X \to \Xd$ with $\varphi(g.x)=\varphi(x)$ (for all $x \in \X(S)$, $g \in \G(S)$ and all $k$-algebras $S$) such that $X$ is isomorphic to the $G$-torsor $Y$ with $Y(R)=\{ x \in \X(R) \mid \s^d(x)=\varphi(x) \}$ for all $k$-$\s$-algebras $R$. Note that $\varphi_S \colon \X(S) \to \Xd(S)$ is a constant map for every $k$-algebra $S$, i.e., there exists an element $a_S \in \Xd(S)$ with $\varphi(x)=a_S$ for all $x \in \X(S)$. Since $\varphi$ is a morphism, $a_S$ maps to $a_{S\te S}$ under both maps $\Xd(S) \to \Xd(S\te S)$. As $\Xd$ is a sheaf, we obtain that $a_S \in \Xd(k)$. Thus there is an element $a \in \Xd(k)$ such that $\varphi_S(x)=a$ for all $x \in \X(S)$ and for every $k$-algebra $S$.  (In particular, the $\Gd$-torsor $\Xd$ is trivial.)
\end{proof}

\begin{cor} \label{cor: sg one}
Under the assumptions of Corollary \ref{cor}, assume in addition that $\s$ is an automorphism on $k$. Then every $G$-torsor is trivial (i.e., $\h^1(k,G)=\{0\}$).
\end{cor}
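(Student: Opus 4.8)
The plan is to reduce directly to the normal form for $G$-torsors provided by Corollary \ref{cor} and then to exploit the bijectivity of $\s^d$ on $k$. By Theorem \ref{thm: H1 bij torsors} it suffices to show that every $G$-torsor is trivial, so let $X$ be a $G$-torsor. By Corollary \ref{cor} we may assume $X=Y$ with $Y(R)=\{x\in\X(R)\mid \s^d(x)=a\}$ for some $\G$-torsor $\X$ and some $a\in\Xd(k)$. In view of Remark \ref{trivialer Torsor} it then suffices to exhibit an element $b\in\X(k)$ with $\s^d(b)=a$, since such a $b$ is a point of $Y(k)$.

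Next I would use the extra hypothesis. If $\s$, and hence $\s^d$, is an automorphism of $k$, then the canonical $k$-algebra homomorphism $\s^d\colon k\to {_{\s^d \!}k}$ is an isomorphism, with inverse $\s^{-d}$. Applying the functor $\X$ to this isomorphism shows that the induced map $\s^d\colon \X(k)\to\Xd(k)$ is a bijection. In particular $a$ lies in its image, so there is a (unique) $b\in\X(k)$ with $\s^d(b)=a$. Hence $b\in Y(k)$, so $Y\cong G$ as $G$-torsors, and therefore $X$ is trivial. As $X$ was arbitrary, this gives $\h^1(k,G)=\{0\}$.

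I do not expect a real obstacle here: essentially all the work is already contained in Corollary \ref{cor}, and the only role of the new hypothesis is to let us pull the twisting datum $a\in\Xd(k)$ back to an honest rational point of $\X$. The one point requiring a routine check is that $\s^d\colon k\to {_{\s^d \!}k}$ is a $k$-algebra isomorphism when $\s$ is bijective; this follows from $\s^d(\lambda x)=\s^d(\lambda)\s^d(x)$ together with the definition of the $k$-algebra structure on ${_{\s^d \!}k}$, which makes $\s^d$ and $\s^{-d}$ mutually inverse $k$-algebra maps.
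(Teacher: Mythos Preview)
Your argument is correct and is essentially the same as the paper's proof: you reduce to the normal form of Corollary \ref{cor}, use that $\s^d\colon \X(k)\to\Xd(k)$ is a bijection when $\s$ is an automorphism of $k$, and conclude $Y(k)\neq\emptyset$. The only difference is that you spell out the justification for this bijectivity and cite Remark \ref{trivialer Torsor} and Theorem \ref{thm: H1 bij torsors} explicitly, whereas the paper leaves these points implicit.
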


\begin{proof}
Let $X$ be a $G$-torsor. By Corollary \ref{cor}, there exists a $\G$-torsor $\X$ and an element $a \in \Xd(k)$ such that $X$ is isomorphic to the $G$-torsor $Y$ with $Y(R)=\{ x \in \X(R) \mid \s^d(x)=a \}$ for every $k$-$\s$-algebra $R$.
Since $\s^d \colon k \to k$ is a bijection also 
$\s^d \colon \X(k) \to \Xd(k)$ is bijective.
This shows that $Y(k)\neq\emptyset$. Thus $Y$ is trivial.
\end{proof}

Corollary \ref{cor: sg one} is not valid without the assumption that $\s\colon k\to k$ is surjective. For example, if we take $\G=\Gm$ and  $a\in k\smallsetminus\s(k)$, then $X=\{x\in\Gm\mid \s^d(x)=a\}$ is a non-trivial torsor for $G=\{g\in\Gm\mid \s(g)=1\}$.

\section{Isomorphism classes of $\s$-Picard-Vessiot rings} \label{sec: Isomorphismclasses}


Difference algebraic groups occur naturally as the Galois groups of linear differential equations depending on a discrete parameter. See \cite{DHW} and \cite{DHW2}. A similar Galois theory also exists for linear difference equations (\cite{OchinnikovWibmer:sGaloistheory}).
In this last section we present an application to this Galois theory. We show that $\h^1(k,G)$ classifies isomorphism classes of $\s$-Picard-Vessiot rings. This generalizes a result proved in the Tannakian context in \cite[Theorem 3.2]{DeligneMilne:TannakianCategories}. (See also \cite[Cor. 3.2]{Dyc}.)

Before recalling the basics of the $\s$-Picard-Vessiot theory we discuss a result that only pertains to $G$-torsors and will be needed in the proofs later.
In the following discussion, let $k$ be a $\s$-field and let $G$ be a $\s$-algebraic group over $k$.
%
%
Let $X$ be a \ks-variety equipped with a (left) $G$-action, i.e., a morphism of $\s$-varieties $G\times X\to X$ such that $G(R)\times X(R)\to X(R)$ is a group action for every \ks-algebra $R$. We will define a functorial (left) action of $G$ on $k\{X\}$. For every \ks-algebra $R$ the group $G(R)$ acts on $k\{X\}\otimes_k R$
$$G(R)\times (k\{X\}\otimes_k R)\to k\{X\}\otimes_k R, \ (g,f)\mapsto g(f)$$
as follows. The element $g\in G(R)$ defines an automorphism of $X_R$ by
$g\colon X_R\to X_R,\ x\mapsto g^{-1}.x$ and the dual map
$k\{X\}\otimes_k R\to k\{X\}\otimes_k R,\ f\mapsto g(f)$ is an $R$-$\s$-algebra automorphism.

%
An element $f\in k\{X\}$ is called \emph{$G$-invariant} if $g(f\otimes 1)=f\otimes 1$ for all $g\in G(R)$ and all \ks-algebras $R$. The \ks-subalgebra of $k\{X\}$ consisting of all $G$-invariant elements is denoted by $k\{X\}^G$.
An ideal $\ida\subseteq k\{X\}$ is called $G$-stable if	$g(\ida\otimes_k R)\subseteq \ida\otimes_k R$ for all $g\in G(R)$ and all \ks-algebras $R$.

\begin{lem} \label{lem: torsor and invariants}
	If $X$ is a $G$-torsor, then
	\begin{enumerate}
		\item $k\{X\}^G=k$ and
		\item the only $G$-stable ideals in $k\{X\}$ are $\{0\}$ and $k\{X\}$.
	\end{enumerate}
\end{lem}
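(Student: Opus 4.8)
The plan is to reduce both assertions to the triviality of the corresponding statements for the trivial torsor $G$ itself, using faithfully flat descent. The key observation is that the formation of $k\{X\}^G$ and the notion of a $G$-stable ideal are compatible with base change along a faithfully flat $\s$-algebra $A$: if $X$ is a $G$-torsor trivial over $A$, then $X_A\cong G_A$ as a $G_A$-torsor, so $A\{X_A\}=k\{X\}\otimes_k A\cong A\{G_A\}$, and it suffices to prove the two claims for $X_A=G_A$ over the base $A$ and then descend. More precisely, for (a) I would first check that an element $f\in k\{X\}$ is $G$-invariant if and only if its image $f\otimes 1$ in $k\{X\}\otimes_k A$ is $G_A$-invariant — the ``only if'' is immediate, and the ``if'' follows because $k\hookrightarrow A$ and the invariants in the upper base are detected after the faithfully flat extension $k\{X\}\to k\{X\}\otimes_k A$. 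Granting $(k\{X\}\otimes_k A)^{G_A}=A$ this forces $k\{X\}^G\subseteq k\{X\}\cap A = k$ (intersection taken inside $k\{X\}\otimes_k A$, using faithful flatness), and the reverse inclusion is trivial.

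So the crux is the case $X=G$: I need $k\{G\}^G=k$ and the only $G$-stable ideals in $k\{G\}$ are $0$ and $k\{G\}$, where $G$ acts on itself by left translation. Here I would argue as in the algebraic-group case. Consider $f\in k\{G\}$ with $g(f\otimes 1)=f\otimes 1$ for all $g\in G(R)$ and all $R$; apply this to the universal point, i.e., take $R=k\{G\}$ and let $g\in G(k\{G\})$ be the element corresponding to $\id_{k\{G\}}$. The invariance identity, written out via the comultiplication $\Delta\colon k\{G\}\to k\{G\}\otimes_k k\{G\}$ (which exists because $k\{G\}\otimes_k k\{G\}=k\{G\times G\}$ and $G$ is a group object in \ks-varieties), becomes $\Delta(f)=1\otimes f$; composing with the counit $\varepsilon\otimes\id$ gives $f=\varepsilon(f)\cdot 1\in k$. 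For the ideals: if $\ida$ is a nonzero $G$-stable ideal, pick $0\neq f\in\ida$; $G$-stability applied to the same universal point gives $\Delta(f)\in\ida\otimes_k k\{G\}$, and then $(\varepsilon\otimes\id)(\Delta(f))=f$ shows... actually the cleaner route is: $G$-stability means $\Delta(f)\in k\{G\}\otimes_k\ida$ — reading the translation action correctly — and applying $\id\otimes\varepsilon$ recovers a unit multiple, forcing $\ida=k\{G\}$; I will have to be careful about which tensor factor the ideal lands in, which amounts to bookkeeping with the left-translation coaction $\rho\colon k\{G\}\to k\{G\}\otimes_k k\{G\}$ and the counit axiom $(\varepsilon\otimes\id)\rho=\id$.

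For the descent step in (b), suppose $\ida\subseteq k\{X\}$ is $G$-stable. Then $\ida\otimes_k A\subseteq k\{X\}\otimes_k A$ is a $G_A$-stable ideal of $k\{X\}\otimes_k A\cong A\{G_A\}$, hence equals $0$ or $k\{X\}\otimes_k A$ by the case just treated; since $A$ is faithfully flat over $k$, the functor $-\otimes_k A$ reflects these two possibilities, so $\ida=0$ or $\ida=k\{X\}$. The main obstacle I anticipate is purely notational rather than conceptual: getting the variance of the $G$-action on the coordinate ring exactly right (the paper defines $g$ acting on $X_R$ by $x\mapsto g^{-1}.x$, so the dual coaction on $k\{X\}$ is a left coaction, and one must track which side of the tensor product the image of an invariant element, resp. of an element of a $G$-stable ideal, lands on when $X$ is not yet identified with $G$). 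Once $X$ is trivialized over $A$ and identified with $G_A$, the translation-coaction computation with the counit is routine; the only genuine input is the existence of the trivializing faithfully flat $A$ (e.g. $A=k\{X\}$, by Remark on trivial torsors, since $X(k\{X\})\ni\id$) together with Corollary~\ref{cor: Abstieg von Varietaeten} to know the base-changed action is again a torsor action.
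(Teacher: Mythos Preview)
Your approach to (a) is fine and, once unwound with $A=k\{X\}$, is essentially the paper's argument: both use the torsor isomorphism $k\{X\}\otimes_k k\{X\}\cong k\{X\}\otimes_k k\{G\}$ to see that a $G$-invariant $f$ satisfies $f\otimes 1=1\otimes f$ in $k\{X\}\otimes_k k\{X\}$, hence $f\in k$.

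For (b) there is a genuine gap. Your descent step needs that the only $G_A$-stable ideals in $A\{G_A\}=k\{G\}\otimes_k A$ are $0$ and the whole ring, but this is \emph{false} whenever $A$ is not a field: for any ideal $J\subseteq A$ the ideal $k\{G\}\otimes_k J$ is $G_A$-stable, since the $G_A$-action is $A$-linear. So ``the case just treated'' does not transfer to the base $A$. Moreover, even over the field $k$ your counit argument is incomplete: from $\rho(f)\in \ida\otimes_k k\{G\}$ and a counit identity you only get $f\in\varepsilon(\ida)\cdot k\{G\}$ (or $S(f)\in\varepsilon(\ida)\cdot k\{G\}$), which for $\ida\neq 0$ yields $\varepsilon(\ida)=k$, i.e.\ the identity section does not lie in $V(\ida)$ --- not that $\ida=k\{G\}$.

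The paper sidesteps both issues by working directly over the field $k$ with the coaction $\rho\colon k\{X\}\to k\{X\}\otimes_k k\{G\}$ and the torsor isomorphism, which is the left $k\{X\}$-linear extension of $\rho$. Under this isomorphism the ideal generated by $\rho(\ida)$ is the image of $k\{X\}\otimes_k\ida$, while $\ida\otimes_k k\{G\}$ pulls back to $\ida\otimes_k k\{X\}$; since $\rho(\ida)\subseteq\ida\otimes_k k\{G\}$, one gets $k\{X\}\otimes_k\ida\subseteq\ida\otimes_k k\{X\}$ inside $k\{X\}\otimes_k k\{X\}$. \emph{Now} the hypothesis that $k$ is a field is used (pick a $k$-basis of $k\{X\}$) to conclude $\ida$ is trivial. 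The field hypothesis enters exactly here and cannot be relocated to the ring $A$.
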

\begin{proof}
	a) Let $f\in k\{X\}^G$. Consider $X$ as a right $G$-torsor via $X\times G\to X,\ (x,g)\mapsto g^{-1}.x$ and let $\rho\colon k\{X\}\to k\{X\}\otimes_kk\{G\}$ be the dual map of this action. Let $R$ be a \ks-algebra and $g\in G(R)$. The action of $g\in G(R)=\Hom(k\{G\},R)$ on $k\{X\}\otimes_k R$ is the $R$-linear extension of
	$k\{X\}\xrightarrow{\rho}k\{X\}\otimes_k k\{G\}\xrightarrow{\id\otimes g} k\{X\}\otimes_k R$. By choosing $R=k\{G\}$ and $g=\id\in G(R)=\Hom(k\{G\},k\{G\})$, we obtain $\rho(f)=g(f)=f\otimes 1$. Since $X$ is a right $G$-torsor, the left $k\{X\}$-linear extension
	$k\{X\}\otimes_k k\{X\}\to k\{X\}\otimes_k k\{G\}$ of $\rho$ is an isomorphism. This implies that $f\otimes 1=1\otimes f\in k\{X\}\otimes_k k\{X\}$. Thus $f\in k$.
	
	b) Let $\ida\subseteq k\{X\}$ be a $G$-stable ideal. As in a), this implies that $\rho(\ida)\subseteq \ida\otimes_k k\{G\}$. Now under the isomorphism $k\{X\}\otimes_k k\{X\}\to k\{X\}\otimes_k k\{G\}$
	the ideal $\ida\otimes_k k\{G\}\subseteq k\{X\}\otimes_k k\{G\} $ corresponds to $\ida\otimes_k k\{X\}$ and the ideal of $k\{X\}\otimes_k k\{G\}$ generated by $\rho(\ida)$ corresponds to $k\{X\}\otimes_k\ida$. Thus $k\{X\}\otimes_k\ida\subseteq\ida\otimes_k k\{X\}$. This is only possible if $\ida$ is trivial. 
\end{proof}

%
%
%
%

Let us now recall the basic definitions and results from \cite{DHW}. A \emph{$\de$-ring} is a commutative ring $R$ together with a derivation $\de\colon R\to R$. An ideal $\ida\subseteq R$ such that $\de(\ida)\subseteq \ida$ is called a \emph{$\de$-ideal}. If every $\de$-ideal of $R$ is trivial, $R$ is called \emph{$\de$-simple}.
The \emph{$\de$-constants} of $R$ are $R^{\de}=\{r\in R \mid \de(r)=0\}$.

A \emph{$\ds$-ring} is a commutative ring $R$ together with a derivation $\de\colon R\to R$ and a ring endomorphism $\s\colon R\to R$ such that $\de(\s(r))=\hslash\s(\de(r))$ for all $r\in R$ for some fixed unit $\hslash\in R^\de$.
If $R$ is a field, we speak of a $\ds$-field. There are the obvious notions of a morphism of $\ds$-rings, of $\ds$-algebras etc.

From now on let $K$ denote a $\ds$-field of characteristic zero and let $k=K^\de$ be the $\s$-field of $\de$-constants. We consider a linear differential system $\de(y)=Ay$ with a matrix $A\in K^{n\times n}$.

\begin{Def} \label{def: sPVring}
	A \emph{$\s$-Picard-Vessiot ring\footnote{This definition differs from Definition \cite[Definition 1.2]{DHW} where the condition $R^\de=k$ is dropped. Definition \ref{def: sPVring} is more convenient for us and \cite[Prop. 1.5]{DHW} shows that with our definition, $\s$-Picard-Vessiot rings correspond to $\s$-Picard-Vessiot extensions.}} for $\de(y)=A y$ is a $K$-$\ds$-algebra $R$ such that
	\begin{itemize}
		\item there exists a matrix $Y\in\GL_n(R)$ with $\de(Y)=AY$ and $R=K\{Y,1/\det(Y)\}$,
		\item $R$ is $\de$-simple,
		\item $R^\de=k$. 
	\end{itemize} 
\end{Def}
The $\s$-Galois group of $R/K$ is the $\s$-algebraic group $G$ over $k$ given by 
$$G(S)=\Aut^{\ds}(R\otimes_k S|K\otimes_k S)$$
for any \ks-algebra $S$. Here and in the sequel $R\otimes_k S$ is considered as a $\ds$-ring with $\de$ being the trivial derivation on $S$, i.e., $\de(s)=0$ for $s\in S$. The choice of $Y\in\GL_n(R)$ determines a $\s$-closed embedding of $G$ into $\GL_n$. Indeed, for every \ks-algebra $S$ and $g\in G(S)$ there exists a matrix $\phi_S(g)\in\GL_n(S)$ such that $g(Y)=Y\phi_S(g)$. Then $\phi\colon G\to \GL_n$ is a $\s$-closed embedding.
The coordinate ring of $G$ is $k\{G\}=(R\otimes_K R)^\de$ and the canonical map 
\begin{equation} \label{eqn: alg torsor isom} R\otimes_k k\{G\}\to R\otimes_K R
\end{equation}
is an isomorphism.
This isomorphism can be reinterpreted by saying that $R$ is the coordinate ring of a right $G_K$-torsor. Using the isomorphism (\ref{eqn: alg torsor isom}), it then follows as in Lemma \ref{lem: torsor and invariants} that:
\begin{lem}  \label{lem: G-stable ideal in R trivial}
	\mbox{}
	\begin{enumerate}
		\item $R^G=K$ and
	\item every $G$-stable ideal of $R$ is trivial.  \qed
	\end{enumerate}
\end{lem}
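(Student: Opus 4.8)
The plan is to transcribe the proof of Lemma~\ref{lem: torsor and invariants}, using the isomorphism~(\ref{eqn: alg torsor isom}) in place of the torsor isomorphism $k\{X\}\te k\{X\}\cong k\{X\}\te k\{G\}$ appearing there, and with the $\s$-field $K$ (that is, $K$ with its derivation forgotten) playing the role of the base. The only preliminary point is to make precise the reinterpretation mentioned just above: one lets $X$ denote the $K$-$\s$-variety with $K\{X\}=R$ (forgetting $\de$) and checks that~(\ref{eqn: alg torsor isom}) realizes $X$ as a right $G_K$-torsor whose comultiplication $\rho\colon R\to R\te k\{G\}$ is dual to the $\s$-Galois action. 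Concretely, unwinding the identification $k\{G\}=(R\otimes_K R)^\de$, one verifies that for every \ks-algebra $S$ and every $g\in G(S)$ the automorphism $g$ of $R\te S$ is the $S$-linear extension of $R\xrightarrow{\rho}R\te k\{G\}\xrightarrow{\id\otimes g}R\te S$; here $\de$ is simply carried along, since $\ds$-automorphisms are in particular $\s$-automorphisms, so the derivation plays no role in the argument. Granting this, one obtains, exactly as in the proof of Lemma~\ref{lem: torsor and invariants}, that $R^G=\{f\in R\mid \rho(f)=f\otimes 1\}$ and that the $G$-stable ideals of $R$ are precisely the ideals $\ida$ with $\rho(\ida)\subseteq\ida\te k\{G\}$ (the extension of $\ida$ along the left tensor factor).

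For a), let $f\in R^G$, so $\rho(f)=f\otimes 1$ in $R\te k\{G\}$. Transporting this along~(\ref{eqn: alg torsor isom}) and using, exactly as in the proof of Lemma~\ref{lem: torsor and invariants}, that the composite of $\rho$ with~(\ref{eqn: alg torsor isom}) is the map $R\to R\otimes_K R$, $r\mapsto 1\otimes r$ (this is where the right-torsor structure enters), one obtains $1\otimes f=f\otimes 1$ in $R\otimes_K R$. Since $K$ is a field and $R\neq 0$, $R$ is faithfully flat over $K$, so the equalizer of the two maps $R\rightrightarrows R\otimes_K R$ is $K$; hence $f\in K$ and $R^G=K$.

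For b), let $\ida\subseteq R$ be a $G$-stable ideal, so $\rho(\ida)\subseteq\ida\te k\{G\}$. Under~(\ref{eqn: alg torsor isom}) the ideal $\ida\te k\{G\}$ corresponds to the ideal $\ida\otimes_K R$ of $R\otimes_K R$, while the ideal generated by $\rho(\ida)$ corresponds to $R\otimes_K\ida$; hence $R\otimes_K\ida\subseteq\ida\otimes_K R$ in $R\otimes_K R$. If $\ida\neq R$, choose a $K$-basis of $R/\ida$ containing the class of $1$; then for $a\in\ida$ the element $1\otimes a$ lies in $\ida\otimes_K R=\ker(R\otimes_K R\to (R/\ida)\otimes_K R)$, which forces $a=0$. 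Thus $\ida=\{0\}$, and the only $G$-stable ideals of $R$ are $\{0\}$ and $R$.

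The one step that is not purely formal is the reinterpretation in the first paragraph, i.e.\ matching the $\s$-Galois action on $R$ with the right $G_K$-torsor coaction supplied by~(\ref{eqn: alg torsor isom}); once that identification is in hand, both assertions are a verbatim copy of Lemma~\ref{lem: torsor and invariants} with $K$ in the role of the torsor base.
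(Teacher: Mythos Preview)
Your proposal is correct and follows exactly the approach the paper intends: the paper's ``proof'' consists solely of the remark that the isomorphism~(\ref{eqn: alg torsor isom}) exhibits $R$ as the coordinate ring of a right $G_K$-torsor, after which the argument of Lemma~\ref{lem: torsor and invariants} applies verbatim with $K$ in place of $k$. You have simply unpacked this reference carefully, including the identification of the $\s$-Galois action with the torsor coaction, which is the one point the paper leaves implicit.
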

Now we are prepared to prove the main result of this section:
\begin{thm} \label{thm: isomclasses of sPVrings}
	Let $R$ be a $\s$-Picard-Vessiot ring for $\de(y)=Ay$ with $\s$-Galois group $G$. Then the set of isomorphism classes of $\s$-Picard-Vessiot rings for $\de(y)=Ay$ is in bijection with $\h^1(k,G)$.
\end{thm}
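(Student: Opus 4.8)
The plan is to produce mutually inverse maps between the set of isomorphism classes of $\s$-Picard-Vessiot rings for $\de(y)=Ay$ and the set of isomorphism classes of $G$-torsors, which equals $\h^1(k,G)$ by Theorem \ref{thm: H1 bij torsors}; the basepoints (the class of $R$ itself, resp. the trivial torsor) will correspond.

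\textbf{From $\s$-Picard-Vessiot rings to torsors.} Given a $\s$-Picard-Vessiot ring $R'$ for $\de(y)=Ay$, I would form the $K$-$\ds$-algebra $R\otimes_K R'$ and set $A=(R\otimes_K R')^\de$. If $Y\in\GL_n(R)$ and $Y'\in\GL_n(R')$ are fundamental solution matrices, then $Z=Y^{-1}Y'$ lies in $\GL_n(R\otimes_K R')$ with $\de(Z)=0$, so the entries of $Z$ and $1/\det(Z)$ lie in $A$; a standard Picard-Vessiot computation then shows $A\neq 0$ and that the canonical maps $R\otimes_k A\to R\otimes_K R'$ and $R'\otimes_k A\to R\otimes_K R'$ are isomorphisms of $\ds$-rings. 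Since $R$ is faithfully flat over the field $k$ and $R\otimes_k A\cong R\otimes_K R'$ is finitely $\s$-generated over $R$, the descent argument in the proof of Corollary \ref{cor: Abstieg von Varietaeten} shows $A$ is finitely $\s$-generated over $k$; let $X$ be the $\s$-variety over $k$ with $k\{X\}=A$. Every $g\in G(S)=\Aut^{\ds}(R\otimes_k S|K\otimes_k S)$ induces a $\ds$-automorphism of $(R\otimes_K R')\otimes_k S$ fixing $R'\otimes_k S$, hence an automorphism of $((R\otimes_K R')\otimes_k S)^\de=A\otimes_k S$ (using that $R$ is $\de$-simple with $R^\de=k$, so $(R\otimes_k M)^\de=M$ for every $k$-module $M$). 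This equips $X$ with a $G$-action, and combining the isomorphism (\ref{eqn: alg torsor isom}) with the two isomorphisms above I would check that $G\times X\to X\times X$, $(g,x)\mapsto(g.x,x)$ is an isomorphism, so $X$ is a $G$-torsor, trivial over $A$. Sending $[R']$ to $[X]\in\h^1(k,G)$ gives a well-defined map $\Phi$ (isomorphic $\s$-Picard-Vessiot rings yield $G$-equivariantly isomorphic data).

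\textbf{From torsors to $\s$-Picard-Vessiot rings.} Given a $G$-torsor $X$, I would equip $R\otimes_k k\{X\}$ — with $k\{X\}$ carrying the zero derivation — with the diagonal $G$-action, $G$ acting on $R$ by the Galois action and on $k\{X\}$ by the functorial action introduced before Lemma \ref{lem: torsor and invariants}, and set $R'=(R\otimes_k k\{X\})^G$. To see $R'$ is a $\s$-Picard-Vessiot ring for $\de(y)=Ay$: using $Y$ to embed $G\hookrightarrow\GL_n$, Lemma \ref{lem: embed into Gln} realizes $X$ as a $\s$-closed $\s$-subvariety of $\GL_n$ on which $G$ acts by matrix multiplication, which provides a ``tautological'' matrix $T\in\GL_n(k\{X\})$ with $k\{X\}=k\{T,1/\det(T)\}$; then $Y':=YT$ is $G$-invariant, lies in $\GL_n(R')$, satisfies $\de(Y')=AY'$, and generates $R'$ over $K$ (again by a faithfully flat descent argument). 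Moreover $(R')^\de=(R\otimes_k k\{X\})^\de\cap R'=k\{X\}^G=k$ by Lemma \ref{lem: torsor and invariants}(a), and a nonzero $\de$-ideal of $R'$ would generate in $R\otimes_k k\{X\}$ a nonzero $\de$-ideal of the form $R\otimes_k\mathfrak{c}$ with $\mathfrak{c}\subseteq k\{X\}$ nonzero and $G$-stable, contradicting Lemma \ref{lem: torsor and invariants}(b); so $R'$ is $\de$-simple. This defines $[X]\mapsto[R']$, a map $\Psi$.

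\textbf{Mutual inverseness and the main obstacle.} For $\Psi\circ\Phi=\id$, with $A=(R\otimes_K R')^\de$ as above, the Galois action on the first factor of $R\otimes_K R'$ transports, under the two isomorphisms, to the diagonal action on $R\otimes_k A$ used in $\Psi$ (this is how the $G$-action on $A$ was defined) and to the action trivial on $R'$ on $R'\otimes_k A$; hence $\Psi(\Phi([R']))=(R\otimes_k A)^G\cong(R\otimes_K R')^G\cong(R'\otimes_k A)^G=R'\otimes_k A^G=R'$, using that $R'$ is flat over $k$ and $A^G=k\{X\}^G=k$. For $\Phi\circ\Psi=\id$, with $R'=(R\otimes_k k\{X\})^G$ I would use (\ref{eqn: alg torsor isom}) to write $R\otimes_K R'\cong(R\otimes_K R\otimes_k k\{X\})^G\cong(R\otimes_k k\{G\}\otimes_k k\{X\})^G$ ($G$ acting on the last two factors), take $\de$-constants to get $(R\otimes_K R')^\de\cong(k\{G\}\otimes_k k\{X\})^G$, and identify this $G$-equivariantly with $k\{X\}$ via the torsor isomorphism $G\times X\xrightarrow{\sim}X\times X$ and Lemma \ref{lem: torsor and invariants}(a). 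The conceptual skeleton is clean; I expect the real work, and the main obstacle, to be twofold: first, the ``Picard-Vessiot plus faithfully flat descent'' verifications that $A=(R\otimes_K R')^\de$ is finitely $\s$-generated with $X$ a genuine torsor, and symmetrically that $R'=(R\otimes_k k\{X\})^G$ is generated over $K$ by a single twisted fundamental matrix and is $\de$-simple; second, the careful bookkeeping of the several $G$-actions through the chain of tensor-product identifications, which is what makes $\Psi\circ\Phi$ and $\Phi\circ\Psi$ literally the identity on isomorphism classes rather than merely an abstract bijection.
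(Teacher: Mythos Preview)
Your proposal is correct and follows essentially the same route as the paper: the two maps are exactly the paper's constructions ($k\{X\}=(R\otimes_K R')^\de$ in one direction, $R'=(R\otimes_k k\{X\})^G$ in the other), and your use of Lemma~\ref{lem: embed into Gln} to produce $Y'=YT$ matches the paper's $Y'=Y\otimes Z$. The one technical point you flag but do not resolve, and which is the genuine crux, is that $R'=K\{Y',1/\det(Y')\}$: this requires knowing that the canonical map $R\otimes_K R'\to R\otimes_k k\{X\}$ is \emph{injective}, which the paper establishes via a minimality/``shortest relation'' argument using the $G$-stability of ideals in $R$ (Lemma~\ref{lem: G-stable ideal in R trivial}); your ``faithfully flat descent'' phrase hides exactly this step.
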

\begin{proof}
	By Theorem \ref{thm: H1 bij torsors} it suffices to establish a bijection between the isomorphism classes of $\s$-Picard-Vessiot rings for $\de(y)=Ay$ and the isomorphism classes of $G$-torsors. Indeed, we will construct an isomorphism of pointed sets such that the equivalence class of $R$ corresponds to the equivalence class of the trivial $G$-torsor.
	
	\medskip
	
	\noindent \emph{First step: From a $\s$-Picard-Vessiot ring to a $G$-torsor:} 
	
	Let $R'$ be a $\s$-Picard-Vessiot ring for $\de(y)=Ay$. For every \ks-algebra $S$ we set
	\begin{equation} \label{eqn: Isom torsor}
	X(S)=\operatorname{Isom}_{K\otimes_k S}^{\ds}(R'\otimes_k S, R\otimes_k S).
	\end{equation}
	We claim that $X$ is a \ks-variety, i.e., representable by a finitely $\s$-generated \ks-algebra.
	Set $k\{X\}=(R\otimes_K R')^\de$. According to Lemma 1.8 in \cite{DHW} the \ks-algebra $k\{X\}$ is finitely $\s$-generated and the canonical map $R\otimes_k k\{X\}\to R\otimes_K R'$ is an isomorphism (of $\ds$-rings). Moreover, as in \cite[Lemma 2.4]{DHW} (for the case $R'=R$) one shows that every $K\otimes_k S$-$\ds$-morphism from $R'\otimes_k S$ to $R\otimes_k S$ is an isomorphism. Thus we have the following chain of identifications
	\begin{align*} X(S) &=\Hom_{K\otimes_k S}^{\ds}(R'\otimes_k S, R\otimes_k S) \\
	&= \Hom_{K}^{\ds}(R', R\otimes_k S)\\
	&= \Hom_{R}^{\ds}(R\otimes_K R', R\otimes_k S) \\
	&= \Hom_{R}^{\ds}(R\otimes_k k\{X\}, R\otimes_k S) \\
	&= \Hom_{k}^{\s}(k\{X\},S),
	\end{align*}
	where the last equality holds because $(R\otimes_k S)^\de=S$. This shows that $X$ is a \ks-variety. Now $G$ acts on $X$ by composition: $g.x=g\circ x$ for $g\in G(S)$ and $x\in X(S)$ and clearly $X$ is a (left) $G$-torsor.
	
	\medskip

	\noindent \emph{Second step: From a $G$-torsor to a $\s$-Picard-Vessiot ring}
	%
	%
	
	Let $X$ be a $G$-torsor. Recall that $G$ acts (functorially from the left) on $R$ and on $k\{X\}$.
	Thus $G$ also acts (functorially from the left) on $R\otimes_k k\{X\}$. Since $G$ acts via $K$-$\ds$-automorphisms the invariants $R'=(R\otimes_k k\{X\})^G$ are a $K$-$\ds$-algebra. We will show that $R'$ is a $\s$-Picard-Vessiot ring for $\de(y)=Ay$.
	
	Let us first show that the canonical map $R\otimes_K R'\to R\otimes_k k\{X\}$ is injective. For a contradiction, assume it is not injective and let $n\geq 2$ be minimal such that there exist $n$ $K$-linearly independent elements $r_1',\ldots,r_n'\in R'$ that are $R$-linearly dependent (when viewed as elements in $R\otimes_k k\{X\}$). Let $\ida\subseteq R$ denote the non-zero ideal of all $r_1\in R$ such that there exist $r_2,\ldots,r_n\in R$ with $r_1r_1'+r_2r_2'+\ldots+r_nr_n'=0$.
	
	We will show that $\ida$ is $G$-stable. So let $S$ be a \ks-algebra and $g\in G(S)$. We have to show that $g(r_1\otimes 1)\in \ida\otimes_k S$ for all $r_1 \in \ida$.
	As $g(r_i'\otimes 1)=r_i'\otimes 1$ for $i=1,\ldots n$ we have
	$g(r_1\otimes 1)(r_1'\otimes 1)+\ldots+g(r_n\otimes 1)(r_n'\otimes 1)=0$.
	
	Let $(f_j)_{j\in J}$ be a $k$-basis of $S$ with $1\in J$ and $f_1=1$. Since $G$ acts on $R$, we have $g(r_i\otimes 1)\in R\otimes_k k\otimes_k 
	S\subseteq R\otimes_k k\{X\}\otimes_k S$ for $i=1,\ldots,n$. So we can write
	$$g(r_i\otimes 1)=\sum_j a_{ij}\otimes 1\otimes f_j $$
	with $a_{ij}\in R$ for $i=1,\ldots,n$.
	Then
	$$\sum_i\sum_j a_{ij}r_i'\otimes f_j=0$$ and therefore
	$\sum_ia_{ij}r_i'=0$ for all $j\in J$. So $a_{1j}\in\ida$ for all $j\in J$. This shows that $g(r_1\otimes 1)=\sum_j a_{1j}\otimes f_j\in\ida\otimes_k S$.
%
	Thus $\ida$ is $G$-stable. Since $R$ itself is the only non-zero $G$-stable ideal in $R$ (Lemma \ref{lem: G-stable ideal in R trivial}) we see that $1\in\ida$. So we may assume that $r_1=1$. Then
	$g(r_1\otimes 1)=1\otimes 1\otimes  1=a_{11}\otimes 1 \otimes f_1$, i.e., $a_{11}=1$ and $a_{1j}=0$ for $j\neq 1$. As $\sum_ia_{ij}r_i'=0$, it follows from the minimality of $n$ that $a_{ij}=0$ for all $i=1,\ldots,n$ and $j\neq 1$. Thus $g(r_i\otimes 1)=a_{i1}\otimes 1\otimes 1$ for $i=1,\ldots,n$. 
	But then $\sum a_{i1}r'_i=0$ and so the minimality of $n$ and the fact that $a_{11}=1$ implies that $a_{i1}=r_i$ for $i=1,\ldots,n$.
Thus $g(r_i\otimes 1)=r_i\otimes 1$ for $i=1,\ldots,n$. Hence $r_i\in R^G=K$ for $i=1,\ldots,n$; a contradiction.

	\medskip
	
	This shows that $R\otimes_K R'\to R\otimes_k k\{X\}$ is injective. Our next concern is to find a $Y'\in\GL_n(R')$ with $\de(Y')=AY'$.
	As explained after Definition \ref{def: sPVring}, we can consider $G$ as a $\s$-closed subgroup of $\GL_n$. So by Lemma \ref{lem: embed into Gln}, we can assume without loss of generality that $X$ is a $\s$-closed $\s$-subvariety of $\GL_n$ and that the (left) action of $G$ on $X$ is given by matrix multiplication. Let $k\{\GL_n\}=k\{T,1/\det(T)\}$ be the (difference) coordinate ring of $\GL_n$ and let $Z$ denote the image of $T$ under the morphism $k\{\GL_n\}\to k\{X\}$ corresponding to the inclusion $X\subseteq \GL_n$. For any \ks-algebra $S$ and $g\in G(S)$ we have $g(Z)=g^{-1}Z$ and therefore
	$$ g(Y\otimes Z)=Yg\otimes g^{-1}Z=Y\otimes Z.$$
	Thus $Y'=Y\otimes Z\in\GL_n(R')$. Since the entries of $Z$ are $\de$-constants it is clear that $\de(Y')=AY'$. Moreover, as $1\otimes Z=(Y\otimes 1)^{-1}Y'$, the map $R\otimes_K K\{Y',1/\det(Y')\}\to R\otimes_k k\{X\}$ is surjective.
	Since $R\otimes_K K\{Y',1/\det(Y')\}\subseteq R\otimes_K R'$ and $R\otimes_K R'$ injects into $R\otimes_k k\{X\}$, this implies that $ R'=K\{Y',1/\det(Y')\}$ and
	$R\otimes_K R'\cong R\otimes_k k\{X\}$.

\medskip
	
	Let us next show that $R'$ is $\de$-simple. Let $\ida$ be a proper differential ideal in $R'$ and let $\widetilde{\ida}$ be the ideal generated by $\ida$ in $R\otimes_k k\{X\}$. Since $R\otimes_K \ida\subseteq R\otimes_K R'$ is a proper $\de$-ideal, also $\widetilde{\ida}\subseteq R\otimes_k k\{X\}$ is a proper differential ideal. By \cite[Lemma 2.3]{DHW} the $\de$-ideal $\widetilde{\ida}$ is generated by $\widetilde{\ida}\cap k\{X\}$. Moreover, since the elements of $\ida$ are fixed by the $G$-action, it is clear that $\widetilde{\ida}$ is $G$-stable, so also $\widetilde{\ida}\cap k\{X\}\subseteq k\{X\}$ is $G$-stable. But according to Lemma \ref{lem: torsor and invariants}, all $G$-stable ideals in $k\{X\}$ are trivial. Thus $\widetilde{\ida}\cap k\{X\}=\{0\}$ and therefore also $\widetilde{\ida}$ and $\ida$ are the zero ideal.
	
	Thus $R'$ is $\de$-simple. To finish the proof that $R'$ is a $\s$-Picard-Vessiot ring it suffices to show that $R'^\de=k$. But as $(R\otimes_k k\{X\})^\de=k\{X\}$, we see that
	$$R'^\de=k\{X\}^G=k$$
	by Lemma \ref{lem: torsor and invariants}. Thus $R'$ is a $\s$-Picard-Vessiot ring for $\de(y)=Ay$.
	
	\medskip
	
	It remains to see that the above two constructions induce bijections on the isomorphism classes. If we start with a $\s$-Picard-Vessiot ring $R'$ and define $X$ as in step one, then $R\otimes_k k\{X\}\cong R\otimes_K R'$. We will show that this isomorphism is compatible with the functorial $G$-action if we let $G$ act on $R'$ trivially: If $Y\in\GL_n(R)$ with $\de(Y)=AY$ and $Y'\in\GL_n(R')$ with $\de(Y')=AY'$, then $k\{X\}=k\{Z,1/\det(Z)\}$ where $Z=Y^{-1}\otimes Y'\in\GL_n(R\otimes_K R')$ (\cite[Lemma 1.8]{DHW}). Thus we can consider $X$ as a $\s$-closed $\s$-subvariety of $\GL_n$. For a \ks-algebra $S$, an element $x\in X(S)\subseteq\GL_n(S)$ corresponds to the isomorphism $R'\otimes_k S\to R\otimes_k S$ determined by $Y'\mapsto Yx$. Moreover, the composition with an element $g\in G(S)$ corresponds to the isomorphism determined by $Y'\mapsto Ygx$. Thus $G$ acts on $X$ by matrix multiplication from the left. So on the coordinate ring $k\{X\}$, the action of $G$ is given by $g(Z)=g^{-1}Z=g^{-1}(Y^{-1}\otimes Y')$. This shows that the isomorphism $R\otimes_k k\{X\}\cong R\otimes_K R'$ is compatible with the $G$-action if $g(Y')=Y'$. Since $(R\otimes_K R')^G\cong R'$ it follows that also $(R\otimes_k k\{X\})^G\cong R'$.
	
	Conversely, if we start with a $G$-torsor $X$ and define $R'=(R\otimes_k k\{X\})^G$, then $R\otimes_K R'\to R\otimes_k k\{X\}$ is an isomorphism and therefore $(R\otimes_K R')^\de\cong k\{X\}$. We claim that the induced isomorphism of $\s$-varieties $X\to\widetilde{X}$ is an isomorphism of $G$-torsors: Let $Y\in\GL_n(R)$ with $\de(Y)=AY$ and define $Z$ and $Y'=Y\otimes Z\in\GL_n(R')$ as in the second step above. Then  $(R\otimes_K R')^\de=k\{\widetilde{Z},1/\det(\widetilde{Z})\}$ where $\widetilde{Z}=Y^{-1}\otimes Y'$. As the isomorphism $(R\otimes_K R')^\de\cong k\{X\}$ maps $\widetilde{Z}$ to $Z$, we see that $\widetilde{X}$ and $X$ agree as $\s$-closed $\s$-subvarieties of $\GL_n$. As $G$ is acting on both by matrix multiplication from the left, we conclude that $X\to\widetilde{X}$ is an isomorphism of $G$-torsors.
\end{proof}

Let $R'$ and $R$ be $\s$-Picard-Vessiot rings for $\de(y)=Ay$ and let $B$ be a \ks-algebra. We say that $R$ and $R'$ are isomorphic over $B$ if there exists an isomorphism of $K\otimes_k B$-$\ds$-algebras between $R'\otimes_k B$ and $R\otimes_k B$. For the torsor $X$ defined by Equation (\ref{eqn: Isom torsor}) we habe $X(B)\neq\emptyset$ if and only if $R$ and $R'$ are isomorphic over $B$. So we obtain 
from Theorem \ref{Klassifikation}:
%
\begin{cor}
	Let $R$ be a $\s$-Picard-Vessiot ring for $\de(y)=Ay$ with $\s$-Galois group $G$. Then for a \ks-algebra $B$, the set of isomorphism classes of $\s$-Picard-Vessiot rings for $\de(y)=Ay$ that are isomorphic to $R$ over $B$ is in bijection with $\h^1(B/k,G)$.
\end{cor}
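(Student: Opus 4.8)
The plan is to read off the statement by concatenating Theorem~\ref{thm: isomclasses of sPVrings} with Theorem~\ref{Klassifikation}, keeping track of which $G$-torsors arise from $\s$-Picard-Vessiot rings isomorphic to $R$ over $B$. Recall that the bijection established in the proof of Theorem~\ref{thm: isomclasses of sPVrings} sends a $\s$-Picard-Vessiot ring $R'$ for $\de(y)=Ay$ to the $G$-torsor $X$ with $X(S)=\operatorname{Isom}_{K\otimes_k S}^{\ds}(R'\otimes_k S, R\otimes_k S)$ (Equation~(\ref{eqn: Isom torsor})), and that it sends $R$ itself to the trivial torsor $G$, since in that case $X(S)=\Aut^{\ds}(R\otimes_k S\mid K\otimes_k S)=G(S)$ with the $G$-action by composition agreeing with left translation.

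First I would observe that, under this correspondence, the $\s$-Picard-Vessiot rings $R'$ isomorphic to $R$ over $B$ are exactly those whose associated torsor $X$ is trivial over $B$. Indeed, saying that $R'$ is isomorphic to $R$ over $B$ means precisely that there is a $K\otimes_k B$-$\ds$-isomorphism $R'\otimes_k B\to R\otimes_k B$, that is, that $X(B)\neq\emptyset$; here one uses the fact, already established in the proof of Theorem~\ref{thm: isomclasses of sPVrings}, that every $K\otimes_k B$-$\ds$-morphism $R'\otimes_k B\to R\otimes_k B$ is automatically an isomorphism, so that $\operatorname{Isom}$ and $\Hom$ coincide in this setting. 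By Remark~\ref{trivialer Torsor}, $X(B)\neq\emptyset$ is equivalent to $X$ being trivial over $B$.

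Putting this together, the isomorphism classes of $\s$-Picard-Vessiot rings for $\de(y)=Ay$ that are isomorphic to $R$ over $B$ are in bijection with the set of isomorphism classes of $G$-torsors trivial over $B$, i.e.\ with $\Sigma_{B/k}(G)$, the class of $R$ corresponding to the class of the trivial torsor. Since $k$ is a field, $B$ is faithfully flat over $k$, so Theorem~\ref{Klassifikation} applies and identifies $\Sigma_{B/k}(G)$ with $\h^1(B/k,G)$ as pointed sets (the trivial cocycle corresponding to $R$); this yields the asserted bijection.

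The argument is essentially a bookkeeping exercise, so I do not anticipate a genuine obstacle; the one point that needs a moment of care is the identification $X(B)\neq\emptyset\iff R'\cong R\text{ over }B$, which relies on the $\operatorname{Isom}$-versus-$\Hom$ remark above rather than on any new argument.
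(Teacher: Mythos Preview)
Your proposal is correct and follows essentially the same approach as the paper: the paper's proof is just the observation that, for the torsor $X$ of Equation~(\ref{eqn: Isom torsor}), one has $X(B)\neq\emptyset$ if and only if $R$ and $R'$ are isomorphic over $B$, followed by an appeal to Theorem~\ref{Klassifikation}. Your write-up simply spells out the intermediate step via Remark~\ref{trivialer Torsor} and the faithful flatness of $B$ over the field $k$; the aside about $\operatorname{Isom}=\Hom$ is harmless but not actually needed here, since $X(B)$ is already defined as an $\operatorname{Isom}$-set.
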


\bibliographystyle{amsalpha}
 \bibliography{references}
\end{document}